\newcounter{MainTheoremCounter}
\newtheorem{Maintheorem}[MainTheoremCounter]{Theorem}
\newtheorem{theorem}{Theorem}[section]
\newtheorem{lemma}[theorem]{Lemma}
\newtheorem{ques}[theorem]{Question}
\theoremstyle{definition}
\newtheorem{definition}[theorem]{Definition}
\newtheorem{cor}[theorem]{Corollary}
\newtheorem{claim}[theorem]{Claim}
\newtheorem{case}[theorem]{Case}
\numberwithin{equation}{section}
\newtheorem{prop}[theorem]{Proposition}
\newcommand\blfootnote[1]{
\begingroup
\renewcommand\thefootnote{}\footnote{#1}
\addtocounter{footnote}{-1}
\endgroup
}
\begin{document}

\title{Polynomial orbits in totally minimal systems}

\author[J.~Qiu]{Jiahao Qiu}
\address[J.~Qiu]{School of Mathematical Sciences, Peking University, Beijing, 100871, China}

\email{qiujh@mail.ustc.edu.cn}

\date{\today}

\begin{abstract}
Inspired by the recent work of Glasner, Huang, Shao, Weiss and Ye \cite{GHSWY20},
we prove that the maximal $\infty$-step pro-nilfactor $X_\infty$ of a minimal system $(X,T)$
is the topological characteristic factor along polynomials in a certain sense.
Namely, we show that by an almost one to one modification of $\pi:X\to X_\infty$,
the induced open extension $\pi^*:X^*\to X_\infty^*$ has the following property:
for any $d\in \mathbb{N}$, any open subsets $V_0,V_1,\ldots,V_d$  of $X^*$ with
$\bigcap_{i=0}^d \pi^*(V_i)\neq \emptyset$ and any
distinct non-constant integer polynomials $p_i$ with $p_i(0)=0$ for $i=1,\ldots,d$,
there exists some $n\in \mathbb{Z}$ such that $V_0\cap
T^{-p_1(n)}V_1\cap \ldots \cap T^{-p_d(n)}V_d  \neq \emptyset$.
%where an integer polynomial is the polynomial with rational coefficients
%taking integer values on the integers.

As an application, the following result is obtained:
for a totally minimal system $(X,T)$ and integer polynomials $p_1,\ldots,p_d$,
if every non-trivial integer combination of $p_1,\ldots,p_d$ is not constant,
then there is a dense $G_\delta$ subset $\Omega$ of $ X$ such that
the set
\[
\{(T^{p_1(n)}x,\ldots, T^{p_d(n)}x):n\in \mathbb{Z}\}
\]
is dense in $X^d$
for every $x\in \Omega$.
\end{abstract}
\blfootnote{ \ \ \
This research is supported by NNSF of China (11431012,11971455)
and China Post-doctoral Grant (BX2021014).
}
\keywords{Totally minimal systems, polynomial orbits}
\subjclass[2020]{Primary: 37B05, Secondary: 37B99}
\maketitle

%\tableofcontents
%\begin{spacing}{2}
	%\tableofcontents
%\end{spacing}   % 若不想要目录, 注释掉该句
%\thispagestyle{empty}   % 不要页眉页脚和页码
%\newpage

%---------以下生成正文----------
%\setcounter{page}{1}    % 从正文开始计页码

\section{Introduction}

In this section, we will provide the background of the research, state the main results of the paper
and give an outline of the ideas for the proofs.

\subsection{Density problems}\

%In ergodic theory there are many results stating that the time averages are equal to the
%%spatial averages under various ergodicity assumptions. For example, the von Neumann
%mean ergodic theorem tells us that if $(X,\mathcal{X} ,\mu,T)$ is ergodic, then for each $f \in L^2(\mu)$,
%one has
%\[
%\lim_{N\to \infty} \frac{1}{N} \sum_{n=0}^{N-1} f(T^{n}x)=\int f\; \mathrm{d}\mu
%\]
%in $L^2$ norm.
%The corresponding topological
%s%tatement is the following: if $(X,T)$ is a transitive system,
%then there is a dense $G_\delta$ subset $\Omega$ of $ X$
%such that each $x\in \Omega$ has a dense orbit.

For a totally ergodic system $(X,\mathcal{X} ,\mu,T)$
(this means $T^k$ is ergodic for any positive integer $k$),
Furstenberg \cite{FH81} shown that for any non-constant integer polynomial
$p$ and $f \in  L^2(\mu)$,
\begin{equation}\label{poly-total-ergodic}
\lim_{N\to \infty} \frac{1}{N} \sum_{n=0}^{N-1} f(T^{p(n)}x)=\int f\; \mathrm{d}\mu
\end{equation}
in $L^2$ norm,
where an integer polynomial is a
polynomial with rational coefficients taking integer values on the integers.
Bourgain \cite{JB89} shown that (\ref{poly-total-ergodic}) holds pointwisely for any
$f \in  L^r(\mu)$ with $r> 1$.

\medskip

For topological dynamics,
the following question is natural.

\begin{ques}\label{Q1}
Let $(X,T)$ be a totally minimal system
(this means $T^k$ is minimal for any positive integer $k$)
and let $p$ be a non-constant integer polymonial.
Is there a point $x\in X$
such that the set $\{T^{p(n)}x : n\in \mathbb{Z}\}$ is dense in $X$?
\end{ques}

Note that for Question \ref{Q1} we cannot use the results of Furstenberg and Bourgain on polynomial convergence for totally ergodic systems,
as not every minimal system admits a totally ergodic measure.
In addition,
the total minimality assumption is necessary,
as can be seen by considering a periodic orbit of period 3 and taking an integer polynomial by $p(n)=n^2$.

%Let $(X,T)$ be a periodic orbit of period 3 and then $(X,T^2)$ is minimal, but it is easy to check that
%$\{T^{n^2}x:n\in \mathbb{Z}\}$ is not dense in $X$ for any $x \in X$.

In \cite{GHSWY20}, it was shown that the answer to Question \ref{Q1} is positive for any integer polynomial of degree 2.

In order to precisely state the equidistribution results for totally ergodic nilsystems
obtained by Frantzikinakis and Kra \cite{FK05},
we start with the following definition.
A family of integer polynomials $\{p_1,\ldots,p_d\}$ is said to be
{\it independent} if for all integers $m_1,\ldots,m_d$ with at least some $m_j\neq 0,j\in\{1,\ldots,d\}$,
the polynomial $\sum_{j=1}^{d}m_jp_j$ is not constant.

%Frantzikinakis and Kra \cite{FK05} proved that

In \cite{FK05},
it was shown that for a totally ergodic nilsystem
which is equivalent to be totally minimal (see for example \cite{LA,Le05A,PW}),
there exists some point whose orbit along an independent family of integer polynomials is
uniformly distributed and thus dense. % in the product space
They also pointed out that the assumption that the polynomial family is independent is necessary,
as can be seen by considering an irrational rotation on the circle.

In this paper, we give an affirmative answer to Question \ref{Q1}.
We prove:

\begin{Maintheorem}\label{poly-orbit}
Let $(X,T)$ be a totally minimal system,
and assume that $\{p_1,\ldots,p_d\}$ is an independent family of integer polynomials.
Then there is a dense $G_\delta$ subset $\Omega$ of $ X$ such that
the set
\begin{equation}\label{product-space}
\{(T^{p_1(n)}x,\ldots, T^{p_d(n)}x):n\in \mathbb{Z}\}
\end{equation}
is dense in $X^d$ for every $x\in \Omega$.
\end{Maintheorem}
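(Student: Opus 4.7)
The plan is to deduce Theorem A from the characteristic factor property stated in the abstract via a standard Baire category reduction to a topological multiple recurrence statement. For any non-empty open box $U_1\times\cdots\times U_d\subset X^d$, the set $\{x\in X : \exists\,n,\ T^{p_i(n)}x\in U_i\text{ for all }i\}$ is open in $X$; intersecting over a countable basis of $X^d$ produces the desired dense $G_\delta$ provided each such set is dense. Hence Theorem A reduces to showing that for all non-empty open $U_0, U_1,\ldots,U_d\subset X$ there exists $n\in\mathbb{Z}$ with
$$U_0 \cap \bigcap_{i=1}^d T^{-p_i(n)} U_i \neq \emptyset. \qquad(*)$$
By replacing $U_i$ with $T^{-p_i(0)}U_i$ for $i\ge 1$, I may assume $p_i(0)=0$; independence of $\{p_1,\ldots,p_d\}$ is preserved, so the family remains distinct and non-constant.

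Applying the characteristic factor theorem, I obtain almost one-to-one modifications $\sigma:X^*\to X$ and $\sigma_\infty:X_\infty^*\to X_\infty$ making $\pi^*:X^*\to X_\infty^*$ open with the stated recurrence property. Setting $V_i:=\sigma^{-1}(U_i)$, non-empty open in $X^*$, it suffices to find $n$ with $V_0\cap\bigcap_i T^{-p_i(n)}V_i\neq\emptyset$, as projecting by $\sigma$ then yields $(*)$.

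The hypothesis $\bigcap_{i=0}^d\pi^*(V_i)\neq\emptyset$ required by the characteristic factor property need not hold a priori, so I would first arrange it using polynomial equidistribution on the pro-nilfactor. Total minimality of $X$ transfers to $X_\infty$, which is a totally minimal inverse limit of nilsystems; by Frantzikinakis--Kra, for any non-empty open $O_0,\ldots,O_d\subset X_\infty$ there exists $n_0$ with $O_0\cap\bigcap_i T^{-p_i(n_0)}O_i\neq\emptyset$. To transfer this up to $X_\infty^*$, I select, for each $i$, a point in $\pi^*(V_i)$ whose $\sigma_\infty$-fiber is a singleton (such points form a dense $G_\delta$ since $\sigma_\infty$ is almost one-to-one), and by upper semicontinuity of fibers choose open $O_i\subset X_\infty$ with $\sigma_\infty^{-1}(O_i)\subset\pi^*(V_i)$. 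Any $z\in O_0\cap\bigcap_i T^{-p_i(n_0)}O_i$ then lifts to some $y^*\in\sigma_\infty^{-1}(z)\subset\pi^*(V_0)$ satisfying $T^{p_i(n_0)}y^*\in\sigma_\infty^{-1}(O_i)\subset\pi^*(V_i)$ for each $i$. Setting $\tilde V_0=V_0$ and $\tilde V_i=T^{-p_i(n_0)}V_i$ for $i\ge 1$, we obtain $y^*\in\bigcap_{i=0}^d\pi^*(\tilde V_i)\neq\emptyset$.

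Finally, the shifted polynomials $q_i(n):=p_i(n+n_0)-p_i(n_0)$ are distinct, non-constant, integer-valued, and satisfy $q_i(0)=0$, so the characteristic factor property applies to $\{\tilde V_i\}$ and $\{q_i\}$: there is $m\in\mathbb{Z}$ with $\tilde V_0\cap\bigcap_i T^{-q_i(m)}\tilde V_i\neq\emptyset$, and unwinding gives $V_0\cap\bigcap_i T^{-p_i(m+n_0)}V_i\neq\emptyset$, so $n=m+n_0$ works. Conditional on the paper's main characteristic factor result (which is where the real work lies), the chief obstacle in this plan is the transfer step in the previous paragraph: carrying polynomial equidistribution from the pro-nilfactor $X_\infty$ up to its almost one-to-one extension $X_\infty^*$, exploiting the singleton-fiber locus of $\sigma_\infty$ and the openness of $\pi^*$. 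Once this is handled, the remainder is a clean dovetailing with the characteristic factor property.
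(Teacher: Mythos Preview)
Your proposal is correct and follows essentially the same route as the paper: reduce to the open-set intersection criterion (Lemma~\ref{equi-condition}), pass to $X^*$, use Frantzikinakis--Kra on the pro-nilfactor to achieve $\bigcap_i \pi^*(V_i)\neq\emptyset$ after a shift by some $n_0$, and then invoke the characteristic factor property (Theorem~\ref{polynomial-TCF}) with the shifted polynomials $q_i(n)=p_i(n+n_0)-p_i(n_0)$. The only cosmetic difference is in the transfer step you flagged: the paper packages ``polynomial density on $X_\infty^*$'' into Corollary~\ref{uni-distributed-AA} (using that $X_\infty^*$ is a totally minimal almost one-to-one extension of an inverse limit of nilsystems), whereas you unfold it by hand via the singleton-fiber locus of $\sigma_\infty$ and upper semicontinuity of fibers; both arguments are valid, and your version in fact makes explicit the inverse-limit-to-nilsystem reduction that the paper leaves implicit.
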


If one assumes that $(X,T)$ is minimal and weakly mixing,
Huang, Shao and Ye \cite{HSY19} showed that
for any family of distinct integer polynomials,
the set (\ref{product-space}) is dense.% in the product space.

\subsection{Topological characteristic factors}\

For a measure preserving system $(X,\mathcal{X} ,\mu,T)$ and $f_1,\ldots,f_d\in L^\infty(\mu)$,
the study of convergence of the {\it multiple ergodic averages}
\begin{equation}\label{MEA}
\frac{1}{N} \sum_{n=0}^{N-1}f_1(T^n x)\cdots f_d(T^{dn}x)
\end{equation}
started from Furstenberg's elegant proof of Szemer\'{e}di Theorem \cite{Sz75}
via an ergodic theoretical analysis \cite{FH}.
 %\cite{FH} drew the deep connection between additive combinatorics and ergodic theory,
%showing that
%follows from an ergodic theorem, now known as the multiple recurrence theorem:
%$d\in \mathbb{N}$ and $A\in \mathcal{X}$ with positive measure, one has
%\[
 %\liminf_{N\to \infty} \frac{1}{N}\sum_{n=0}^{N-1}\mu(A\cap T^{-n} A\cap T^{-2n}A\cap \ldots \cap T^{-dn} A)>0.
%\]
%It is natural to ask about the convergence of these averages; or
%more generally, about the convergence, either in $L^2$ norm or pointwise, of the
%\emph{multiple ergodic averages}
%\begin{equation}\label{MEA}
%\frac{1}{N} \sum_{n=0}^{N-1}f_1(T^n x)\cdots f_d(T^{dn}x)
%\end{equation}
%where $f_1,\ldots,f_d\in L^\infty(\mu)$.
After nearly 30 years' efforts of many researchers, this
problem (for $L^2$ convergence) was finally solved in \cite{HK05,TZ07}.
The basic approach is to find an appropriate factor, called a characteristic factor,
that controls the limit behavior in $L^2(\mu)$ of the averages (\ref{MEA}).
For the origin of these ideas and this terminology, see \cite{FH}.
To be more precise, let $(X,\mathcal{X} ,\mu,T)$ be a measure preserving system
and let $(Y,\mathcal{Y},\nu,T)$ be a factor of $X$.
We say that $Y$ is a \emph{characteristic factor} of $X$ if for all
$f_1,\ldots,f_d\in L^\infty(\mu)$,
\[
\lim_{N\to \infty}\frac{1}{N} \sum_{n=0}^{N-1}f_1(T^n x)\cdots  f_d(T^{dn}x)
-\frac{1}{N} \sum_{n=0}^{N-1}\mathbb{E}(f_1| \mathcal{Y})(T^nx)\cdots
\mathbb{E}(f_d| \mathcal{Y})(T^{dn}x)=0
\]
in $L^2$ norm.
The next step is to obtain a concrete description for some well chosen characteristic factor
in order to prove convergence.
The result in \cite{HK05,TZ07} shows that
such a characteristic factor can be described as an inverse limit of nilsystems,
which is also called a pro-nilfactor.

\medskip

The counterpart of characteristic factors for topological dynamics was first studied by
Glasner in \cite{GE94}. To state the result we need a notion called saturated subset.
Given a map $\pi:X\to Y$ of sets $X$ and $Y$, a subset $L$ of $X$ is called $\pi$-\emph{saturated} if
\[
\{  x\in L:\pi^{-1}(\{\pi(x)\})\subset L \}=L
\]
i.e., $L=\pi^{-1}(\pi(L))$.
Here is the definition of topological characteristic factors:
\begin{definition}\cite{GE94}\label{def-saturated}
  Let $\pi:(X,T)\to (Y,T)$ be a factor map of topological dynamical systems and $d\in \mathbb{N}$.
$(Y,T)$ is said to be a \emph{$d$-step topological characteristic factor}
if there exists a
dense $G_\delta$ subset $\Omega$ of $X$ such that for each $x\in \Omega$ the orbit closure
\[
L_x^d(X):=\overline{\mathcal{O}}\big((\underbrace{x,\ldots,x}_{\text{$d$ times}}),T\times \ldots \times T^d\big)
\]
is $\underbrace{\pi\times \ldots\times \pi}_{\text{$d$ times}}$-saturated.
That is, $(x_1,\ldots,x_d)\in L_x^d(X)$ if and only if
$(x_1',\ldots,x_d')\in L_x^d(X)$ whenever $\pi(x_i)=\pi(x_i')$ for every $i=1,\ldots, d$.
%where $x^{(d)}=(x,\ldots,x)$ ($d$ times) and
%$\pi^{(d)}=\pi \times \ldots \times \pi$ ($d$ times).
\end{definition}

In \cite{GE94}, it was shown that for minimal systems, up to a canonically defined proximal
extension, a characteristic family for $T\times \ldots \times T^d$ is the family of canonical PI flows of class $(d-1)$.
In particular, if $(X,T)$ is distal, then its largest class $(d-1)$ distal factor
is its topological characteristic factor along $T\times \ldots \times T^d$. Moreover,
if $(X,T)$ is weakly mixing, then the trivial system is its topological characteristic factor.
For more related results we refer the reader to \cite{GE94}.

\medskip

On the other hand,
to get the corresponding pro-nilfactors for topological dynamics,
in a pioneer work, Host, Kra and Maass \cite{HKM10} introduced the notion of
{\it regionally proximal relation of order $d$}
for a topological dynamical system $(X,T)$, denoted by $\mathbf{RP}^{[d]}(X)$.
For $d\in\mathbb{N}$, we say that a minimal system $(X,T)$ is a \emph{d-step pro-nilsystem}
if $\mathbf{RP}^{[d]}(X)=\Delta$ and this is equivalent for $(X,T)$ being
an inverse limit of minimal $d$-step nilsystems.
For a minimal distal system $(X,T)$, it was proved that
$\mathbf{RP}^{[d]}(X)$ is an equivalence relation and $X/\mathbf{RP}^{[d]}(X)$
is the maximal $d$-step pro-nilfactor \cite{HKM10}.
Later, Shao and Ye \cite{SY12} showed that in fact for
any minimal system, the regionally proximal relation of order $d$ is an equivalence
relation and it also has the so-called lifting property.

\medskip

Very recently,
the result in \cite{GHSWY20} improves Glasner's result to pro-nilsystems significantly.
That is, they proved:

\begin{theorem}[Glasner-Huang-Shao-Weiss-Ye]\label{key-thm0}
Let $(X,T)$ be a minimal system, and let $\pi:X\rightarrow X/\mathbf{RP}^{[\infty]}(X)= X_\infty$ be the factor map.
Then there exist minimal systems $X^*$ and $X_\infty^*$ which are almost one to one
extensions of $X$ and $X_\infty$ respectively, and a commuting diagram below such that  $X_\infty^*$ is a
$d$-step topological characteristic factor of $X^*$ for all $d\ge 2$.
\begin{equation}\label{commuting-diagram}
\xymatrix{
  X \ar[d]_{\pi}  & X^* \ar[d]^{\pi^*}  \ar[l]_{\sigma^*} \\
  X_\infty   & X_\infty^*      \ar[l]_{\tau^*}
  }
\end{equation}
\end{theorem}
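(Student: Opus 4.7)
The plan proceeds in three stages: reduce $\pi$ to an open factor map, pick a dense $G_\delta$ set of generic base points, and prove saturation of the $d$-fold orbit closure via the cube structure that underlies $\mathbf{RP}^{[\infty]}$.

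First, by the standard existence of almost one-to-one open modifications of factor maps between minimal systems, I would produce the extensions $\sigma^*:X^*\to X$ and $\tau^*:X_\infty^*\to X_\infty$ with $\pi^*$ open. Since such modifications leave the maximal $\infty$-step pro-nilfactor intact, one has $X_\infty^*=X^*/\mathbf{RP}^{[\infty]}(X^*)$, so that $\pi^*(x)=\pi^*(x')$ is equivalent to $(x,x')\in\mathbf{RP}^{[\infty]}(X^*)$. Openness of $\pi^*$ is essential here: it gives upper semicontinuity of the fiber assignment and thereby controls how limits from orbits behave within fibers. For fixed $d\ge 2$, let $\Omega_d\subset X^*$ be the set of continuity points of the (lower semicontinuous) Vietoris-valued map $x\mapsto L_x^d(X^*)$, intersected with the transitive points of $(X^*,T)$; this is dense $G_\delta$, and one takes $\Omega=\bigcap_{d\ge 2}\Omega_d$.

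The heart of the argument is proving saturation. Fix $x\in\Omega$, $(x_1,\ldots,x_d)\in L_x^d(X^*)$, and a tuple $(x_1',\ldots,x_d')$ with $\pi^*(x_i)=\pi^*(x_i')$ for every $i$. Writing $(x_1,\ldots,x_d)=\lim_k(T^{m_k}x,T^{2m_k}x,\ldots,T^{dm_k}x)$ reduces the task to finding integers $\ell_k$ with $T^{i\ell_k}x_i\to x_i'$ simultaneously for all $i=1,\ldots,d$. Each hypothesis $(x_i,x_i')\in\mathbf{RP}^{[\infty]}(X^*)$ provides dynamical parallelepiped configurations of arbitrary order (Host--Kra--Maass, Shao--Ye); projecting to $X_\infty^*$ reduces the synchronization to a question about linear orbits $\ell\mapsto(T^\ell y_1,T^{2\ell}y_2,\ldots,T^{d\ell}y_d)$ on products of pro-nilfactors, where Leibman-type equidistribution on nilmanifolds yields sufficiently many $\ell$ to approach the target. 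Lifting back through the openness of $\pi^*$ together with the lifting property of $\mathbf{RP}^{[\infty]}$ then completes the saturation.

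The main obstacle will be this final synchronization: producing one $\ell$ that slides $x_i$ to $x_i'$ via $T^{i\ell}$ for all $d$ coordinates at once. On the nilfactor side this is an arithmetic density statement for $(T,T^2,\ldots,T^d)$ on products of nilmanifolds, controlled by nilpotent Lie group structure; topologically, transferring this back to $X^*$ and passing from each finite step to the $\infty$-step setting requires a diagonal argument up the tower of pro-nilfactors, together with a careful semicontinuity argument using openness of $\pi^*$ to ensure that the approximations chosen in each pro-nilquotient can be realized by a single sequence of integers $\ell_k$ in $X^*$.
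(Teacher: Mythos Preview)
This theorem is not proved in the present paper; it is quoted verbatim from \cite{GHSWY20} and used as a black box (the paper also records the closely related Theorem~\ref{key-thm}, again without proof, and only derives Lemma~\ref{fip-infi-fiber} from it). So there is no in-paper argument to compare your proposal against.

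That said, your sketch has a genuine gap at the saturation step. The first stage is fine and standard: the $O$-diagram produces an open $\pi^*$, and since $\sigma^*$ and $\tau^*$ are almost one-to-one (hence proximal) one does get $R_{\pi^*}\subset\mathbf{RP}^{[\infty]}(X^*)$, which is the implication you need. Your reduction of saturation to finding integers $\ell$ with $(T^{\ell}x_1,\ldots,T^{d\ell}x_d)$ close to $(x_1',\ldots,x_d')$ is also correct, because $L_x^d(X^*)$ is closed and $(T\times\cdots\times T^d)$-invariant.

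The problem is the mechanism you propose for producing such $\ell$. Projecting to $X_\infty^*$ collapses each pair $(x_i,x_i')$ to a single point $\pi^*(x_i)=\pi^*(x_i')$, so on the pro-nilfactor the synchronization problem is vacuous (solved by $\ell=0$); there is no nontrivial ``arithmetic density statement for $(T,T^2,\ldots,T^d)$ on products of nilmanifolds'' left to invoke. All of the content lives inside the fibers of $\pi^*$, and neither openness of $\pi^*$ (which only gives continuity of $y\mapsto(\pi^*)^{-1}(y)$, not hitting-time information within a fiber) nor the lifting property of $\mathbf{RP}^{[\infty]}$ (a statement about images of the relation under factor maps) manufactures a \emph{common} $\ell$ that moves every $x_i$ toward $x_i'$ along $T^{i\ell}$ simultaneously. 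Knowing $(x_i,x_i')\in\mathbf{RP}^{[\infty]}(X^*)$ gives, via Proposition~\ref{RP-infi}, that each $N(x_i,V_i')\in\mathcal{F}_{fip}$, but you need $\bigcap_i \tfrac{1}{i}N(x_i,V_i')\neq\emptyset$, and $\mathcal{F}_{fip}$ is not a filter. The actual argument in \cite{GHSWY20} is substantially longer and structural; it does not go through the short equidistribution-plus-lifting route you outline.
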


\medskip

%From the topological characteristic factor
In the theorem above, one can see that
for any open subsets $V_0,V_1,\ldots,V_d$ of $X^*$ with
$\bigcap_{i=0}^d \pi^*(V_i)\neq \emptyset$,
there is some $n\in \mathbb{Z}$
such that
\[
V_0\cap T^{-n}V_1\cap \ldots\cap T^{-dn} V_d\neq \emptyset.
\]

Based on this result,
in this paper we use PET-induction which was introduced by Bergelson in \cite{BV87},
to give a polynomial version of their work:

\begin{Maintheorem}\label{polynomial-TCF}
Let $(X,T)$ be a minimal system,
and let $\pi:X\to X/\mathbf{RP}^{[\infty]}(X)= X_{\infty}$ be the factor map.
Then there exist minimal systems $X^*$ and $X_\infty^*$
which are almost one to one extensions of $X$ and $X_\infty$ respectively,
and a commuting diagram as in (\ref{commuting-diagram}) such that
for any open subsets $V_0,V_1,\ldots,V_d$ of $X^*$ with
$\bigcap_{i=0}^d \pi^*(V_i)\neq \emptyset$ and any distinct non-constant integer polynomials $p_i$
with $p_i(0)=0$ for $i=1,\ldots,d$,
there exists some $n\in \mathbb{Z}$
such that
\[
V_0\cap T^{-p_1(n)}V_1\cap \ldots\cap T^{-p_d(n)} V_d\neq \emptyset.
\]

\end{Maintheorem}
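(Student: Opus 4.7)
The plan is to take the same almost one-to-one extensions $X^*$ and $X_\infty^*$ that are furnished by Theorem \ref{key-thm0}, and then upgrade the linear intersection property there to the polynomial one via PET-induction on the family $\{p_1, \ldots, p_d\}$ (with $p_0 \equiv 0$ adjoined to control $V_0$). Bergelson's PET setting assigns to such a family a well-ordered ``weight,'' decreasing under a specific differencing/translation operation. The base case corresponds to a family that can be handled directly by Theorem \ref{key-thm0}, and every other family should be PET-reducible to a lighter one.

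For the base case, I would argue that a family consisting of distinct linear polynomials $p_i(n) = a_i n$ with nonzero integers $a_i$ can be handled by applying Theorem \ref{key-thm0} to suitable powers of $T$. Since $(X,T)$ is minimal, the maximal $\infty$-step pro-nilfactor behaves well under $T \mapsto T^k$, so the pair $(X^*, X_\infty^*)$ serves, and one recovers the intersection property $V_0 \cap T^{-a_1 n}V_1 \cap \ldots \cap T^{-a_d n}V_d \neq \emptyset$ for some $n$.

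For the inductive step, given a family of weight above the base, I would choose a polynomial $p_j$ to ``normalize'' (by Bergelson's rule, typically one of lowest non-leading degree in the heaviest bucket) and pass to the differenced family $\{p_i(n+m) - p_j(n+m) - (p_i(m) - p_j(m)) : i \neq j\}$, viewed as polynomials in $n$ with parameter $m$. After reindexing this family has strictly smaller weight. A topological van der Corput style argument should then show: for a carefully chosen parameter $m$, the intersection property for the lighter family on refined open sets $V_i^{(m)}$ whose $\pi^*$-projections still share a common point implies the intersection property for the original family on the $V_i$. The refinement exploits the openness of the extension $\pi^* : X^* \to X_\infty^*$ (built into Theorem \ref{key-thm0}) together with the lifting property of $\mathbf{RP}^{[\infty]}$ due to Shao and Ye.

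The main obstacle is the inductive step --- translating PET-induction, which in the ergodic setting relies on the $L^2$ van der Corput inequality, into a purely topological statement. One cannot average here and must exhibit concrete $m$ and $n$ witnessing the intersection. Making this work requires a careful interplay between the minimality of products such as $(X^* \times X^*, T \times T)$ restricted to appropriate invariant subsets, the lifting property for $\mathbf{RP}^{[\infty]}(X^*)$, and the inductive hypothesis on the lighter family. Arranging the open-set refinements so that the projection hypothesis $\bigcap_{i=0}^d \pi^*(V_i^{(m)}) \neq \emptyset$ persists at every PET reduction step is likely the most technically delicate point.
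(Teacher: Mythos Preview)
Your outline has the right skeleton (use the diagram from Theorem~\ref{key-thm0}, then PET-induct), but there is a genuine gap in both the base case and, more seriously, in the way you propose to close the induction.

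For the base case you say you will apply Theorem~\ref{key-thm0} ``to suitable powers of $T$'' to handle $p_i(n)=a_in$. That theorem gives saturation of the orbit closure under $T\times T^2\times\ldots\times T^d$, not under an arbitrary tuple $T^{a_1}\times\ldots\times T^{a_s}$, and passing to powers changes the pro-nilfactor and the almost one-to-one diagram. The paper instead extracts from the saturation a set in $\mathcal{F}_{fip}$ of return times hitting the prescribed open sets simultaneously (Lemma~\ref{fip-infi-fiber}), and then intersects it with the $\mathcal{F}_{fip}^*$ recurrence times of the base point in $X_\infty^*$ along the auxiliary polynomials (Corollary~\ref{return-time-AA}); the filter property of $\mathcal{F}_{fip}^*$ makes this work.

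The deeper problem is your inductive step. A single ``topological van der Corput'' differencing at a parameter $m$ does not give back the hypothesis $\bigcap_i\pi^*(V_i^{(m)})\neq\emptyset$ on the reduced family, and you correctly flag this as the delicate point without resolving it. The paper's fix is to prove a \emph{stronger} statement by induction: one carries an auxiliary finite family $\mathcal{C}\subset\mathcal{P}^*$ and asks, in addition to $T^{p_i(n)}z\in V_i$, that $T^{q(n)}\pi(z)\in\bigcap_i\pi(V_i)$ for all $q\in\mathcal{C}$ (the property $\Lambda(\mathcal{A},\mathcal{C})$ of Theorem~\ref{polynomial-case}). This extra control on the base is exactly what guarantees that the common-projection hypothesis survives each PET reduction. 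Moreover, the reduction itself is not a one-shot differencing: the paper covers a fixed fibre $\pi^{-1}(\{\pi(x)\})$ by finitely many translates $T^{a_j}V_m$, then builds points $x_1,\ldots,x_d$ and times $n_1,\ldots,n_d$ by an iterated construction, finishing with a pigeonhole argument on the labels $a_{k_j}$. A further subtlety you do not mention is that linear members of the family satisfy $\partial_m q=q$, so they cannot be used to track different open sets after differencing; this forces the two auxiliary Claims~\ref{gene-claim1} and~\ref{gene-claim2}, which separate the linear and nonlinear parts of $\mathcal{A}$ and are themselves proved by a nested PET-induction. Without the $\mathcal{C}$-strengthening and this linear/nonlinear dichotomy, the induction you sketch does not close.
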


\subsection{Strategy of the proofs}\

To prove Theorem \ref{poly-orbit},
by Theorem \ref{key-thm0}
it suffices to verify the system $(X^*,T)$
which is also totally minimal.
It is equivalent to prove that
for any given non-empty open subsets $V_0,V_1,\ldots,V_d$ of $X^*$,
there exists some $n\in \mathbb{Z}$
such that
\[
V_0\cap T^{-p_1(n)}V_1\cap \ldots\cap T^{-p_d(n)} V_d\neq \emptyset.
\]

Notice that $X_\infty^*$ is an
almost one to one extension of a totally minimal $\infty$-step pro-nilsystem
which can be approximated arbitrarily
well by a nilsystem (see \cite[Theorem 3.6]{DDMSY13}),
we get that $( X_{\infty}^*,T)$ satisfies Theorem \ref{poly-orbit} (Lemma \ref{equi-condition}),
which implies there is some $m\in \mathbb{Z}$ such that
\[
\pi^*(V_0)\cap T^{-p_1(m)}\pi^*(V_1)\cap\ldots \cap T^{-p_d(m)}\pi^*(V_d)\neq \emptyset.
\]

Using Theorem \ref{polynomial-TCF}
for open sets $V_0,T^{-p_1(m)}V_1,\ldots,T^{-p_d(m)}V_d$
and integer polynomials $p_1(\cdot+m)-p_1(m),\ldots,p_d(\cdot+m)-p_d(m)$,
there is some $k\in \mathbb{Z}$ such that
\[
V_0\cap T^{-p_1(k+m)}V_1\cap \ldots\cap T^{-p_d(k+m)} V_d\neq \emptyset,
\]
as was to be shown.

\medskip

To prove Theorem \ref{polynomial-TCF},
we use PET-induction, which was introduced by Bergelson in \cite{BV87},
where PET stands for {\it polynomial ergodic theorem} or
{\it polynomial exhaustion technique} (see \cite{BV87,BM00}).
See also \cite{BL96,BL99} for more on PET-induction.

Basically, we associate any finite collection of integer polynomials with a `complexity',
and reduce the complexity at some step to the trivial one.
Note that in some step,
the cardinal number of the collection may increase while the complexity decreases.

We note that when doing the induction procedure,
we find that the known results are not enough to
guarantee them,
and we need a stronger result (Theorem \ref{polynomial-case}).
After we introduce PET-induction in Subsection \ref{PET-induction-def}, we will explain the main ideas for the
proof via proving some simple examples.

\subsection{The organization of the paper}\

The paper is organized as follows.
In Section \ref{pre}, the basic notions used in the paper are introduced.
In Section \ref{pf-thm-A}, we first give a proof of Theorem \ref{poly-orbit} assuming Theorem \ref{polynomial-TCF}
whose proof is very complicated.
In Section \ref{pf-thm-B}, we prove Theorem \ref{polynomial-TCF}.

\bigskip

\noindent {\bf Acknowledgments.}
The author would like to thank Professors Wen Huang, Song Shao and Xiangdong Ye for helping discussions.
%The author was supported by NNSF of China (11801538,11971455)
%and China Post-doctoral Grant (BX2021014).

\section{Preliminaries}\label{pre}
In this section we gather definitions and preliminary results that
will be necessary later on.
Let $\mathbb{N}$ and $\mathbb{Z}$ be the sets of all positive integers
and integers respectively.

\subsection{Topological dynamical systems}\

A \emph{topological dynamical system}
 (or \emph{system}) is a pair $(X,T)$,
 where $X$ is a compact metric space with a metric $\rho$, and $T : X \to  X$
is a homeomorphism.
%If $A$ is a non-empty closed subset of $X$ and $TA\subset A$, then $(A,T|_A)$ is called a \emph{subsystem} of $(X,T)$,
%where $T|_A$ is the restriction of $T$ on $A$.
%If there is no ambiguity, we use the notation $T$ instead of $T|_A$.
For $x\in X$, the \emph{orbit} of $x$ is given by $\mathcal{O}(x,T)=\{T^nx: n\in \mathbb{Z}\}$.
For convenience, we denote the orbit closure of $x\in X$
under $T$ by $\overline{\mathcal{O}}(x,T)$,
instead of $\overline{\mathcal{O}(x,T)}$.
A system $(X,T)$ is said to be \emph{minimal} if
every point has a dense orbit,
and \emph{totally minimal} if $(X,T^k)$ is minimal for
any positive integer $k$.

\medskip

A \emph{homomorphism} between systems $(X,T)$ and $(Y,T)$ is a continuous onto map
$\pi:X\to Y$ which intertwines the actions; one says that $(Y,T)$ is a \emph{factor} of $(X,T)$
and that $(X,T)$ is an \emph{extension} of $(Y,T)$. One also refers to $\pi$ as a \emph{factor map} or
an \emph{extension} and one uses the notation $\pi : (X,T) \to (Y,T)$.
%The systems are said to be \emph{conjugate} if $\pi$ is a bijection.
An extension $\pi$ is determined
by the corresponding closed invariant equivalence relation $R_\pi=\{(x,x')\in X\times X\colon \pi(x)=\pi(x')\}$.

\subsection{Regional proximality of higher order}\

For $\vec{n} = (n_1,\ldots, n_d)\in \mathbb{Z}^d$ and $\epsilon=(\epsilon_1,\ldots,\epsilon_d)\in \{0,1\}^d$, we
define
$\displaystyle\vec{n}\cdot \epsilon = \sum_{i=1}^d n_i\epsilon_i $.

\begin{definition}\cite{HKM10}\label{definition of pronilsystem and pronilfactor}
Let $(X,T)$ be a system and $d\in \mathbb{N}$.
The \emph{regionally proximal relation of order $d$} is the relation $\mathbf{RP}^{[d]}(X)$
defined by: $(x,y)\in\textbf{RP}^{[d]}(X)$ if
and only if for any $\delta>0$, there
exist $x',y'\in X$ and
$\vec{n}\in \mathbb{N}^d$ such that:
$\rho(x,x')<\delta,\rho(y,y')<\delta$ and
\[
\rho(  T^{\vec{n}\cdot\epsilon} x', T^{\vec{n}\cdot\epsilon}  y')<\delta,
\quad \forall\;\epsilon\in \{0,1\}^d\backslash\{ \vec{0}\}.
\]

A system is called a \emph{$d$-step pro-nilsystem}
if its regionally proximal relation of order $d$ is trivial,
i.e., coincides with the diagonal.
\end{definition}

It is clear that for any system $(X,T)$,
\[
\ldots\subset \mathbf{RP}^{[d+1]}(X)\subset \mathbf{RP}^{[d]}(X)\subset
\ldots \subset\mathbf{RP}^{[2]}(X)\subset \mathbf{RP}^{[1]}(X).
\]

\begin{theorem}\cite[Theorem 3.3]{SY12}\label{cube-minimal}
For any minimal system and $d\in \mathbb{N}$,
the regionally proximal relation of order $d$
 is a closed invariant equivalence relation.
\end{theorem}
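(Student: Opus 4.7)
The plan is to verify four properties of $\mathbf{RP}^{[d]}(X)$: that it is closed, $T$-invariant, reflexive and symmetric, and transitive. The first three are more or less formal; transitivity is the substantive content, and the hard step where minimality is actually used.

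Closedness is built into the ``for any $\delta>0$'' quantifier: if $(x_k,y_k)\to (x,y)$ with each $(x_k,y_k)\in\mathbf{RP}^{[d]}(X)$, then given $\delta>0$ I would first pick $k$ with $\rho(x_k,x),\rho(y_k,y)<\delta/2$, then apply the definition to $(x_k,y_k)$ at scale $\delta/2$ and use a triangle inequality to produce the witnesses $x',y',\vec n$ required for $(x,y)$. $T$-invariance follows from the uniform continuity of $T$ and $T^{-1}$: given $\delta>0$, choose $\delta'>0$ such that $T$ does not expand distances beyond $\delta$ on pairs at distance $<\delta'$, apply the definition to $(x,y)$ at scale $\delta'$, and push forward. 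Reflexivity is trivial (take $x'=y'=x$, any $\vec n$); symmetry is apparent from the symmetric roles of $x,y$ in the definition.

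The core of the proof is transitivity. Here I would go through the dynamical cube structure: introduce $\mathbf{Q}^{[d]}(X)\subset X^{2^d}$ as the orbit closure of the diagonal under the face group $\mathcal F^{[d]}$ generated by the face transformations, and prove (or invoke) the characterization
\[
(x,y)\in\mathbf{RP}^{[d]}(X)\iff (x,y,y,\ldots,y)\in\mathbf{Q}^{[d]}(X),
\]
where $x$ sits at the vertex $\vec 0$ and $y$ at every other vertex. The inclusion ``$\Leftarrow$'' is immediate from the definitions, and ``$\Rightarrow$'' comes from approximating the witnesses $x',y',\vec n$ by true face-orbit points, using that the face action has enough flexibility to realize any near-diagonal edge as a limit of face translates of the diagonal. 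Given this equivalent characterization, if $(x,y)$ and $(y,z)$ both lie in $\mathbf{RP}^{[d]}(X)$, I obtain two points $(x,y,\ldots,y)$ and $(y,z,\ldots,z)$ in $\mathbf{Q}^{[d]}(X)$, and the task becomes to combine them into a point $(x,z,\ldots,z)\in \mathbf{Q}^{[d]}(X)$.

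The main obstacle is exactly this combination step, and it is where minimality enters. The plan is to fix a minimal $\mathcal F^{[d]}$-subsystem $M\subset \mathbf{Q}^{[d]}(X)$ whose projection to the first coordinate is all of $X$ (such an $M$ exists by a routine Zorn argument together with minimality of $(X,T)$), lift the configuration from $(y,z,\ldots,z)$ into $M$ (possibly after perturbing $y$ slightly), and then use minimality of $M$ under $\mathcal F^{[d]}$ to return arbitrarily close to this lifted configuration starting from any nearby cube whose first coordinate is near $x$. Feeding in the cube coming from $(x,y)$ and choosing the face return time carefully, one gets $(x,z,\ldots,z)$ as a limit of points in $\mathbf{Q}^{[d]}(X)$, hence in $\mathbf{Q}^{[d]}(X)$ by closedness; by the characterization this yields $(x,z)\in\mathbf{RP}^{[d]}(X)$. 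The delicate point will be to execute the ``alignment'' of the two cubes along their common $y$-face without destroying the approximation on the remaining vertices, which is where the lifting property of face-minimal systems does the real work.
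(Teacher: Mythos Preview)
The paper does not prove this theorem; it is quoted from \cite{SY12} as a preliminary result with no argument supplied. So there is no ``paper's own proof'' to compare your proposal against.

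That said, your sketch is broadly in the spirit of the Shao--Ye argument: the first three properties are indeed formal, and transitivity is the substantive step, handled via the dynamical cube machinery and the characterization $(x,y)\in\mathbf{RP}^{[d]}$ iff $(x,y,\ldots,y)$ lies in the appropriate cube space. A couple of cautions if you intend to flesh this out. First, watch the indexing: with the convention in this paper (where $\mathbf{RP}^{[d]}$ is defined via $\vec n\in\mathbb{N}^d$), the relevant cube space is $\mathbf{Q}^{[d+1]}\subset X^{2^{d+1}}$, not $\mathbf{Q}^{[d]}$. Second, the ``alignment'' step you describe at the end is genuinely the heart of the matter, and your description of it is not yet a proof---the actual Shao--Ye argument establishes minimality of the face-group action on $\mathbf{Q}^{[d+1]}$ and a ``gluing'' lemma for cubes sharing a face, neither of which is trivial. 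Your plan is the right roadmap, but the hard lemmas are still ahead of you.
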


It follows that for any minimal system $(X,T)$,
\[
\mathbf{RP}^{[\infty]}(X)=\bigcap_{d\geq1}\mathbf{RP}^{[d]}(X)
\]
is also a closed invariant equivalence relation.

Now we formulate the definition of $\infty$-step pro-nilsystems.

\begin{definition}
A minimal system $(X,T)$ is an \emph{$\infty$-step pro-nilsystem},
if the equivalence relation $\mathbf{RP}^{[\infty]}(X)$ is trivial,
i.e., coincides with the diagonal.
\end{definition}

The regionally proximal relation of order $d$ allows us to construct the \emph{maximal $d$-step pro-nilfactor}
of a minimal system. That is, any factor of $d$-step pro-nilsystem
factorizes through this system.

\begin{theorem}\label{lift-property}\cite[Theorem 3.8]{SY12}
Let $\pi :(X,T)\to (Y,T)$ be a factor map of minimal systems and $d\in \mathbb{N}\cup\{\infty\}$. Then,
\begin{enumerate}
\item $(\pi \times \pi) \mathbf{RP}^{[d]}(X)=\mathbf{RP}^{[d]}(Y)$.
\item $(Y,T)$ is a $d$-step pro-nilsystem if and only if $\mathbf{RP}^{[d]}(X)\subset R_\pi$.
\end{enumerate}

In particular, the quotient of $(X,T)$ under $\mathbf{RP}^{[d]}(X)$
is the maximal $d$-step pro-nilfactor of $X$.
\end{theorem}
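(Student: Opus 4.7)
The plan is to reduce part (2) and the ``in particular'' statement to part (1), so that the real work lies entirely in proving part (1). If $(Y,T)$ is a $d$-step pro-nilsystem then $\mathbf{RP}^{[d]}(Y)=\Delta_Y$, so part (1) gives $(\pi\times\pi)\mathbf{RP}^{[d]}(X)\subset \Delta_Y$, which is exactly $\mathbf{RP}^{[d]}(X)\subset R_\pi$; conversely, if $\mathbf{RP}^{[d]}(X)\subset R_\pi$ then $(\pi\times\pi)\mathbf{RP}^{[d]}(X)\subset \Delta_Y$, and part (1) forces $\mathbf{RP}^{[d]}(Y)\subset \Delta_Y$. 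For the ``in particular'' statement I would take $Y=X/\mathbf{RP}^{[d]}(X)$, so that $R_\pi=\mathbf{RP}^{[d]}(X)$ and part (2) shows $Y$ is a $d$-step pro-nilsystem; maximality is then automatic, since for any factor map $\pi':X\to Y'$ with $Y'$ a $d$-step pro-nilsystem, part (2) gives $\mathbf{RP}^{[d]}(X)\subset R_{\pi'}$, so $\pi'$ factors through the quotient by $\mathbf{RP}^{[d]}(X)$.

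For part (1), the inclusion $(\pi\times\pi)\mathbf{RP}^{[d]}(X)\subset \mathbf{RP}^{[d]}(Y)$ in the finite-$d$ case is a direct unwinding of the definition using uniform continuity of $\pi$: given $(x,y)\in \mathbf{RP}^{[d]}(X)$ and any target scale $\delta'>0$ on $Y$, pick $\delta>0$ so small that $\rho$-distance less than $\delta$ on $X$ forces distance less than $\delta'$ between the $\pi$-images, and push forward a witnessing pair $(x',y')$ together with its integer vector $\vec n$. The $d=\infty$ version is then obtained by intersecting over all $d$. The hard direction, and the main obstacle, is the lifting $\mathbf{RP}^{[d]}(Y)\subset (\pi\times\pi)\mathbf{RP}^{[d]}(X)$.

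For finite $d$, I plan to exploit the cube-structure description underlying Theorem \ref{cube-minimal}: the relation $\mathbf{RP}^{[d]}(X)$ can be recovered from the orbit closure of the diagonal of $X^{2^d}$ under the face-group generated by the maps $T^{\vec n\cdot\epsilon}$, as the set of pairs appearing as the first two coordinates of such a cube. Given $(y_1,y_2)\in \mathbf{RP}^{[d]}(Y)$, I would fix a lift $(x_1,x_2)\in X\times X$ with $\pi(x_i)=y_i$, consider the diagonal point $(x_1,\ldots,x_1)\in X^{2^d}$, and use the minimality of the face-group action on a minimal subsystem containing this diagonal (provided by Theorem \ref{cube-minimal}) to push it close to a cube whose projection to $Y^{2^d}$ witnesses $(y_1,y_2)\in\mathbf{RP}^{[d]}(Y)$. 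Passing to a limit by compactness would produce a point $\tilde x_2\in \pi^{-1}(y_2)$ such that $(x_1,\tilde x_2)\in \mathbf{RP}^{[d]}(X)$. The subtlety is ensuring that the second coordinate of the limit cube lies precisely in the fibre $\pi^{-1}(y_2)$ rather than only in its orbit closure; this is where the minimality conclusion of Theorem \ref{cube-minimal}, combined with the closedness of $\pi$, is essential.

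Finally, for $d=\infty$, I would bootstrap from the finite case by a diagonal argument. Given $(y_1,y_2)\in \mathbf{RP}^{[\infty]}(Y)=\bigcap_{d\ge 1}\mathbf{RP}^{[d]}(Y)$, the finite case produces for each $d$ a pair $(x_1^{(d)},x_2^{(d)})\in \mathbf{RP}^{[d]}(X)$ with $\pi(x_i^{(d)})=y_i$. Extracting a convergent subsequence by compactness of the fibres $\pi^{-1}(y_1)$ and $\pi^{-1}(y_2)$, and exploiting that the relations $\mathbf{RP}^{[d]}(X)$ are closed and nested decreasing in $d$, the limit pair $(x_1,x_2)$ satisfies $\pi(x_i)=y_i$ and lies in $\bigcap_{d\ge 1}\mathbf{RP}^{[d]}(X)=\mathbf{RP}^{[\infty]}(X)$, completing the argument.
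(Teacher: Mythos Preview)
The paper does not prove this theorem; it is quoted from \cite[Theorem 3.8]{SY12} and stated without proof, so there is no ``paper's own proof'' to compare against.

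Your outline is broadly in the spirit of the original Shao--Ye argument: reducing (2) and the maximality statement to (1) is straightforward and correct, the easy inclusion $(\pi\times\pi)\mathbf{RP}^{[d]}(X)\subset\mathbf{RP}^{[d]}(Y)$ follows as you say, and the $d=\infty$ case is handled by the compactness/diagonal argument you describe. The substantive content is the lifting direction for finite $d$, and here your sketch is correct in shape but underspecified. You invoke Theorem~\ref{cube-minimal} for ``minimality of the face-group action on a minimal subsystem containing the diagonal,'' but Theorem~\ref{cube-minimal} as stated in this paper only asserts that $\mathbf{RP}^{[d]}(X)$ is a closed invariant equivalence relation; the cube/face-group machinery you need (in particular, the minimality of the system $(\overline{\mathcal{O}}(x^{[d+1]},\mathcal{F}^{[d+1]}))$ and the characterization of $\mathbf{RP}^{[d]}$ via it) is developed in \cite{SY12} and is not reproduced here. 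Moreover, the step ``push the diagonal close to a cube whose projection to $Y^{2^d}$ witnesses $(y_1,y_2)$'' and the subsequent control of the second coordinate in the fibre $\pi^{-1}(y_2)$ require the full strength of the Shao--Ye structure theory (specifically, their characterization that $(x,y)\in\mathbf{RP}^{[d]}$ iff $(x,y,\ldots,y)$ lies in the closed orbit of the diagonal under the face group), not merely the equivalence-relation statement. So your proposal is a faithful roadmap of the Shao--Ye proof rather than an independent argument, and to make it rigorous you would need to import precisely the results you are citing.
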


\subsection{Nilpotent groups, nilmanifolds and nilsystems}\

Let $L$ be a group.
For $g,h\in L$, we write $[g,h]=ghg^{-1}h^{-1}$ for the commutator of $g$ and $h$,
we write $[A,B]$ for the subgroup spanned by $\{[a,b]:a\in A,b\in B\}$.
The commutator subgroups $L_j,j\geq1$, are defined inductively by setting $L_1=L$
and $L_{j+1}=[L_j,L],j\geq1$.
Let $k\geq 1$ be an integer.
We say that $L$ is \emph{k-step nilpotent} if $L_{k+1}$ is the trivial subgroup.

Let $L$ be a $k$-step nilpotent Lie group and $\Gamma$ be a discrete cocompact subgroup of $L$.
The compact manifold $X=L/\Gamma$ is called a \emph{k-step nilmanifold.}
The group $L$ acts on $X$ by left translation and we write this action as $(g,x)\mapsto gx$.
Let $\tau\in L$ and $T$ be the transformation $x\mapsto \tau x$ of $X$.
Then $(X,T)$ is called a \emph{k-step nilsystem}.

We also make use of inverse limits of nilsystems and so we recall the definition of an inverse limit
of systems (restricting ourselves to the case of sequential inverse limits).
If $\{(X_i,T_i)\}_{i\in \mathbb{N}}$ are systems with $\text{diam}(X_i)\leq 1$ and $\phi_i:X_{i+1}\to X_i$
are factor maps, the \emph{inverse limit} of the systems is defined to be the compact subset of
$\prod_{i\in \mathbb{N}}X_i$
given by $\{(x_i)_{i\in\mathbb{N}}:\phi_i(x_{i+1})=x_i,i\in \mathbb{N}\}$,
which is denoted by $\lim\limits_{\longleftarrow}\{ X_i\}_{i\in \mathbb{N}}$.
It is a compact metric space endowed with the
distance $\rho(x,y)=\sum_{i\in \mathbb{N}}1/ 2^i \rho_i(x_i,y_i)$.
We note that the maps $\{T_i\}_{i\in \mathbb{N}}$ induce a transformation $T$ on the inverse limit.

\medskip

The following structure theorems characterize inverse limits of nilsystems.

\begin{theorem}[Host-Kra-Maass]\cite[Theorem 1.2]{HKM10}\label{description}
Let $d\geq2$ be an integer.
A minimal system is a $d$-step pro-nilsystem
if and only if
it is an inverse limit of minimal $d$-step nilsystems.
\end{theorem}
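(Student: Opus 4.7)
The plan is to prove both directions of the equivalence separately, with the bulk of the effort going into the ``only if'' direction.

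For the easy direction, suppose $X$ is the inverse limit of a sequence of minimal $d$-step nilsystems $\{(X_i,T_i)\}_{i\in\mathbb{N}}$ with canonical projections $\sigma_i:X\to X_i$. I would first check that every $d$-step nilsystem has trivial $\mathbf{RP}^{[d]}$; this reduces to a direct computation in the nilpotent Lie group $L$, using that $L_{d+1}=\{e\}$ forces the iterated commutators arising from a $(d+1)$-fold cube configuration to be trivial. Then by the lifting property of Theorem \ref{lift-property}(1), $(\sigma_i\times\sigma_i)\mathbf{RP}^{[d]}(X)=\mathbf{RP}^{[d]}(X_i)=\Delta_{X_i}$ for each $i$, whence $\mathbf{RP}^{[d]}(X)\subset\bigcap_i R_{\sigma_i}=\Delta_X$.

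For the hard direction, assume $\mathbf{RP}^{[d]}(X)=\Delta_X$. My first step is to show that $X$ is distal, using that the proximal relation is always contained in $\mathbf{RP}^{[d]}$: if $(x,y)$ is proximal, one may choose a single large $m$ so that $T^m x$ is close to $T^m y$, take $\vec n=(m,0,\ldots,0)$, and perturb $y$ slightly to meet the definition. By Furstenberg's structure theorem for minimal distal systems, $X$ is then a (possibly transfinite) inverse limit of isometric group extensions of the trivial system.

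The main work, and the step I expect to be the principal obstacle, is to promote this distal tower into a genuine sequential inverse limit of $d$-step nilsystems. The tool is the dynamical cube space $\mathbf{Q}^{[d+1]}(X)$, together with its minimality properties coming from Theorem \ref{cube-minimal}. The hypothesis $\mathbf{RP}^{[d]}(X)=\Delta_X$ translates into a unique-closing property for $(d+1)$-dimensional dynamical cubes: a cube is determined by its $2^{d+1}-1$ non-initial vertices. From this rigidity I would extract, on each sufficiently fine metric factor $Y$ of $X$, a $d$-step nilpotent Lie group of self-homeomorphisms commuting with $T$ that acts transitively, so that $Y$ becomes a $d$-step nilmanifold with $T$ a translation. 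The group elements are manufactured as limits of $T^n$ along subsequences supplied by cube recurrence, with the nilpotency controlled by the cube symmetries. Writing $X$ as the inverse limit of countably many such $Y$ then completes the proof; the delicate verification that the group so generated is Lie and $d$-step nilpotent is the main technical hurdle, carried out by induction on $d$ together with a careful analysis of the face actions on $\mathbf{Q}^{[d+1]}(X)$.
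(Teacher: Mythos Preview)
The paper does not supply a proof of this theorem at all: it is quoted verbatim from \cite[Theorem 1.2]{HKM10} and used as a black box. There is therefore no ``paper's own proof'' to compare your proposal against; any argument you give is necessarily extra material relative to this paper.

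As for the content of your sketch: the easy direction is fine and is indeed the standard argument. For the hard direction, what you describe is a reasonable high-level summary of the Host--Kra--Maass strategy (unique closing of $(d{+}1)$-cubes, construction of a host group from limits of iterates, verification that it is a $d$-step nilpotent Lie group acting transitively on a metric factor), but it is only a summary. The step you flag as ``the main technical hurdle'' --- producing a \emph{Lie} group that is $d$-step nilpotent and acts transitively on a factor so that the factor is genuinely a nilmanifold --- is essentially the entire content of \cite{HKM10}, and your outline does not indicate how you would carry it out (e.g.\ how you pass from an abstract Polish group of automorphisms to a finite-dimensional Lie structure, or why the commutator filtration terminates at step $d$). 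Also, a small wrinkle: your reduction to metric factors $Y$ of $X$ presupposes that $\mathbf{RP}^{[d]}(Y)$ is trivial for such $Y$, which needs the lifting property (Theorem \ref{lift-property}) and is not automatic from $\mathbf{RP}^{[d]}(X)=\Delta_X$ alone without that input. If your intent is simply to cite \cite{HKM10}, that matches the paper; if you intend to reprove it, the sketch as written has substantial gaps that you have correctly identified but not filled.
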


\begin{theorem}\cite[Theorem 3.6]{DDMSY13}\label{system-of-order-infi}
A minimal system is an $\infty$-step pro-nilsystem
if and only if it is an inverse limit of minimal nilsystems.
\end{theorem}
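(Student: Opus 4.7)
The claim is a two-way characterization, and the plan is to deduce it from the lifting property for $\mathbf{RP}^{[d]}$ (Theorem \ref{lift-property}) together with the finite-step version (Theorem \ref{description}).

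For the easy direction, assume $X = \lim\limits_{\longleftarrow} X_n$ with factor maps $\pi_n : X \to X_n$, each $X_n$ a minimal nilsystem of some step $k_n$. Each $X_n$ is then trivially a $k_n$-step pro-nilsystem, so $\mathbf{RP}^{[k_n]}(X_n) = \Delta_{X_n}$, and Theorem \ref{lift-property}(2) forces $\mathbf{RP}^{[k_n]}(X) \subset R_{\pi_n}$. Since the inverse-limit structure separates points, $\bigcap_n R_{\pi_n} = \Delta_X$, giving
\[
\mathbf{RP}^{[\infty]}(X) = \bigcap_{d\geq 1} \mathbf{RP}^{[d]}(X) \subset \bigcap_n \mathbf{RP}^{[k_n]}(X) \subset \bigcap_n R_{\pi_n} = \Delta_X.
\]

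For the other direction, assume $\mathbf{RP}^{[\infty]}(X) = \Delta_X$. For each $d\in\mathbb{N}$ let $\pi_d : X \to X_d := X/\mathbf{RP}^{[d]}(X)$ be the maximal $d$-step pro-nilfactor (Theorem \ref{lift-property}); the inclusions $\mathbf{RP}^{[d+1]}(X) \subset \mathbf{RP}^{[d]}(X)$ give compatible factor maps $X_{d+1} \to X_d$. The induced equivariant map $\Phi: X \to \lim\limits_{\longleftarrow} X_d$ is injective because its kernel is $\bigcap_d R_{\pi_d} = \mathbf{RP}^{[\infty]}(X) = \Delta_X$, and surjective by a routine nested-compact-set argument on the fibres $\pi_d^{-1}(y_d)$; hence $\Phi$ is a topological conjugacy. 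By Theorem \ref{description}, each $X_d$ itself writes as an inverse limit of minimal $d$-step nilsystems $Y_{d,i}$, so the compositions $\sigma_{d,i} : X \to Y_{d,i}$ form a family of nilsystem factors of $X$ whose associated equivalence relations $R_{\sigma_{d,i}} \subset X \times X$ satisfy $\bigcap_{d,i} R_{\sigma_{d,i}} = \Delta_X$.

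To repackage this as a single sequential inverse limit of honest nilsystems, I exploit metrizability. Since $X\times X$ is second countable, the open cover of $(X\times X)\setminus \Delta_X$ by the complements of the $R_{\sigma_{d,i}}$ admits a countable subcover, so countably many of the $Y_{d,i}$, say $\{Z_k\}_{k\geq 1}$ with factor maps $\sigma_k$, already satisfy $\bigcap_k R_{\sigma_k} = \Delta_X$. Form the joint factors $W_n := X/\bigcap_{k\leq n} R_{\sigma_k}$; then $W_n$ is equivariantly isomorphic to a minimal closed invariant subset of the product nilsystem $Z_1\times\cdots\times Z_n$. Invoking the classical structural fact that a minimal subsystem of a nilsystem is itself a nilsystem (a consequence of the nilmanifold structure theory, cf.\ Leibman), each $W_n$ is a minimal nilsystem; together with $\bigcap_n \bigcap_{k\leq n} R_{\sigma_k} = \bigcap_k R_{\sigma_k} = \Delta_X$, this yields $X = \lim\limits_{\longleftarrow} W_n$, as required. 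The main obstacle I anticipate is precisely this final step: the passage from a double inverse limit of pro-nilfactors to a single sequential inverse limit of honest nilsystems rests on the structural input that minimal subsystems of products of nilsystems are again nilsystems, which is not contained in the material shown and must be cited. Everything else -- the lifting property, the separation of points by the $X_d$, and the surjectivity of $\Phi$ -- is routine compactness combined with the already established results.
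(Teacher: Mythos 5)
The paper does not prove this statement; it is quoted verbatim from \cite[Theorem 3.6]{DDMSY13}, so there is no in-paper argument to compare against. Your proof is correct and is essentially the standard argument behind the cited result: the forward direction via Theorem \ref{lift-property}(2), and the converse by writing $X$ as the inverse limit of its maximal $d$-step pro-nilfactors $X/\mathbf{RP}^{[d]}(X)$, expanding each via Theorem \ref{description}, and diagonalizing into a single sequential inverse limit. The one piece of genuinely external input --- that a minimal orbit closure in a (product of) nilsystem(s) is again a nilsystem, needed to see that the joint factors $W_n$ are honest nilsystems rather than merely pro-nilsystems --- is real, classical (Leibman/Lesigne: orbit closures under translations of nilmanifolds are sub-nilmanifolds), and you correctly flag it as requiring a citation; with that reference supplied, the argument is complete.
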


\subsection{Some facts about hyperspaces and fundamental extensions}\

Let $X$ be a compact metric space with a metric $\rho$.
Let $2^X$ be the collection of non-empty closed subsets of $X$.
We may define a metric on $2^X$ as follows:
\begin{align*}
\rho_H(A,C) &= \inf\{  \epsilon>0:A\subset B(C,\epsilon),C\subset B(A,\epsilon) \} \\
& =\max\{ \max_{a\in A} \rho(a,C),\max_{c\in C} \rho(c,A)\},
\end{align*}
where $\rho(x,A)=\inf_{y\in A}\rho(x,y)$ and $B(A,\epsilon)=\{x\in X:\rho(x,A)<\epsilon\}$.
The metric $\rho_H$ is called the \emph{Hausdorff metric} on $2^X$.

\medskip

Let $\pi:(X,T)\to (Y,T)$ be a factor map of systems.
We say that:
\begin{enumerate}
\item $\pi$ is an \emph{open} extension if it is open as a map;
\item  $\pi$ is an \emph{almost one to one} extension if there is a dense $G_\delta$ subset
$\Omega$ of $X$ such that $\pi^{-1}(\{\pi (x)\})=\{x\}$ for every $x\in \Omega$.
\end{enumerate}

The following is a well known fact about open mappings (see \cite[Appendix A.8]{JDV} for example).

\begin{theorem}\label{open-map}
Let $\pi:(X,T)\to (Y,T)$ be a factor map of systems.
Then the map $\pi^{-1}:Y\to 2^X,y \mapsto  \pi^{-1}(y)$ is continuous
if and only if $\pi$ is open.
\end{theorem}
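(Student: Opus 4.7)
The plan is to prove the biconditional by separating the Hausdorff distance $\rho_H(\pi^{-1}(y), \pi^{-1}(y_0))$ into its two one-sided pieces and handling each. The dynamics play no role: this is really a topological fact about continuous surjections between compact metric spaces, so I would argue in that generality.

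For the $(\Leftarrow)$ direction, assume $\pi$ is open and fix $y_0 \in Y$ and $\epsilon > 0$. The first inequality $\sup_{x \in \pi^{-1}(y)} \rho(x, \pi^{-1}(y_0)) < \epsilon$ for $y$ near $y_0$ follows from continuity of $\pi$ and compactness of $X$ alone: if it failed along some $y_n \to y_0$ with points $x_n \in \pi^{-1}(y_n)$ satisfying $\rho(x_n, \pi^{-1}(y_0)) \geq \epsilon$, a subsequential limit $x$ of $\{x_n\}$ would satisfy $\pi(x) = y_0$, contradicting the distance bound. For the reverse inequality $\sup_{x' \in \pi^{-1}(y_0)} \rho(x', \pi^{-1}(y)) < \epsilon$, openness enters: cover the compact fibre $\pi^{-1}(y_0)$ by finitely many balls $B(x_1, \epsilon/2), \ldots, B(x_N, \epsilon/2)$ with $x_i \in \pi^{-1}(y_0)$. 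Since $\pi$ is open, each $\pi(B(x_i, \epsilon/2))$ is an open neighbourhood of $y_0$, so there is $\delta > 0$ with $B(y_0, \delta)$ contained in their intersection. For any $y \in B(y_0, \delta)$ and any $x' \in \pi^{-1}(y_0)$, pick $i$ with $x' \in B(x_i, \epsilon/2)$; then there exists $x \in B(x_i, \epsilon/2) \cap \pi^{-1}(y)$, whence $\rho(x, x') < \epsilon$.

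For $(\Rightarrow)$, assume $\pi^{-1}$ is Hausdorff-continuous and let $U \subset X$ be open. To see that $\pi(U)$ is open in $Y$, fix $y_0 \in \pi(U)$, pick $x_0 \in U$ with $\pi(x_0) = y_0$, and choose $\epsilon > 0$ with $B(x_0, \epsilon) \subset U$. By continuity, there is $\delta > 0$ such that $\rho_H(\pi^{-1}(y), \pi^{-1}(y_0)) < \epsilon$ whenever $\rho_Y(y, y_0) < \delta$. Unpacking the Hausdorff metric against the point $x_0 \in \pi^{-1}(y_0)$ yields some $x \in \pi^{-1}(y)$ with $\rho(x, x_0) < \epsilon$, hence $x \in U$ and $y = \pi(x) \in \pi(U)$; this shows $B(y_0, \delta) \subset \pi(U)$.

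The only genuinely delicate step is the lower-semicontinuity part of the first direction. Applied pointwise, openness of $\pi$ gives, for each $x' \in \pi^{-1}(y_0)$, a radius $\delta = \delta(x')$ for which nearby fibres meet $B(x', \epsilon)$, but Hausdorff continuity of $\pi^{-1}$ requires a single $\delta$ uniform over the whole fibre; compactness of $\pi^{-1}(y_0)$ bridges this gap via the finite subcover above. Every other piece of the argument is routine sequential compactness.
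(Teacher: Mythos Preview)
Your argument is correct. The paper does not actually prove this theorem: it records it as a well-known fact and refers the reader to \cite[Appendix A.8]{JDV}, so there is no in-paper proof to compare against. Your treatment---splitting the Hausdorff distance into its two one-sided pieces, using sequential compactness for upper semicontinuity and a finite cover of the fibre to pass from pointwise to uniform lower semicontinuity---is the standard route and is fully rigorous.
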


\subsection{Polynomial orbits in minimal systems}\

We have the following characterization of polynomial orbits in minimal systems.

\begin{lemma}\label{equi-condition}
Let $(X,T)$ be a minimal system and let $p_1,\ldots,p_d$ be non-constant integer polynomials.
Then the following statements are equivalent:
\begin{enumerate}
\item There exists a dense $G_\delta$ subset $\Omega$
of $X$ such that
the set
\[
\{(T^{p_1(n)}x,\ldots, T^{p_d(n)}x):n\in \mathbb{Z}\}
\]
is dense in $X^d$ for every $x\in \Omega$.
\item There exists some $x\in X$ such that the set
\[
\{(T^{p_1(n)}x,\ldots, T^{p_d(n)}x):n\in \mathbb{Z}\}
\]
is dense in $X^d$.
\item For any non-empty open subsets $U,V_1,\ldots,V_d$ of $X$,
there is some $n\in \mathbb{Z}$ such that
\[
U\cap T^{-p_1(n)}V_1\cap\ldots \cap T^{-p_d(n)}V_d\neq \emptyset.
\]
\end{enumerate}
\end{lemma}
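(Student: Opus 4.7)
The plan is to establish the cycle (1)$\Rightarrow$(2)$\Rightarrow$(3)$\Rightarrow$(1). The implication (1)$\Rightarrow$(2) is immediate since a dense $G_\delta$ subset of the compact metric space $X$ is non-empty, so any $x\in \Omega$ witnesses (2).

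For (2)$\Rightarrow$(3), the strategy is to exploit minimality of $(X,T)$ together with a $T$-invariance argument. Let
\[
Y=\bigl\{y\in X:\{(T^{p_1(n)}y,\ldots,T^{p_d(n)}y):n\in\mathbb{Z}\}\text{ is dense in }X^d\bigr\}.
\]
By (2), $Y$ is non-empty. The key observation is that the polynomial orbit of $Ty$ satisfies
\[
(T^{p_1(n)}Ty,\ldots,T^{p_d(n)}Ty)=(T\times\cdots\times T)(T^{p_1(n)}y,\ldots,T^{p_d(n)}y),
\]
so it is the image of the polynomial orbit of $y$ under the homeomorphism $T\times\cdots\times T$ of $X^d$; in particular density is preserved, and similarly for $T^{-1}$. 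Hence $Y$ is fully $T$-invariant. Since $\overline{Y}$ is then a non-empty closed $T$-invariant subset of the minimal system $(X,T)$, we conclude $\overline{Y}=X$, i.e.\ $Y$ is dense in $X$. Now given non-empty open sets $U,V_1,\ldots,V_d$, pick $y\in Y\cap U$ and use density of the polynomial orbit of $y$ in $X^d$ to find $n\in\mathbb{Z}$ with $T^{p_j(n)}y\in V_j$ for $j=1,\ldots,d$; then $y\in U\cap \bigcap_{j=1}^d T^{-p_j(n)}V_j$, giving (3).

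For (3)$\Rightarrow$(1), the argument is a standard Baire category construction. Fix a countable base $\{U_k\}_{k\in\mathbb{N}}$ for the topology of $X$, so that the sets $U_{i_1}\times\cdots\times U_{i_d}$ form a countable base for $X^d$. For each tuple $(i_1,\ldots,i_d)\in\mathbb{N}^d$ define
\[
\Omega_{i_1,\ldots,i_d}=\bigcup_{n\in\mathbb{Z}}\bigl(T^{-p_1(n)}U_{i_1}\cap\cdots\cap T^{-p_d(n)}U_{i_d}\bigr),
\]
which is open, and consists of all $x\in X$ whose polynomial orbit visits $U_{i_1}\times\cdots\times U_{i_d}$. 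By hypothesis (3), for every non-empty open $U\subseteq X$ one has $U\cap \Omega_{i_1,\ldots,i_d}\neq\emptyset$, so $\Omega_{i_1,\ldots,i_d}$ is dense. Taking the countable intersection
\[
\Omega=\bigcap_{(i_1,\ldots,i_d)\in\mathbb{N}^d}\Omega_{i_1,\ldots,i_d},
\]
the Baire category theorem ensures $\Omega$ is a dense $G_\delta$ subset of $X$, and by construction every $x\in \Omega$ has polynomial orbit meeting every basic open set of $X^d$, hence dense in $X^d$, yielding (1).

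There is no single hard step here; the only substantive point is the $T$-invariance of $Y$ in the (2)$\Rightarrow$(3) direction, which uses both that the $p_i$ are polynomials (so that translation in the base variable composes cleanly with $T$) and that $T\times\cdots\times T$ is a homeomorphism of $X^d$. Everything else is bookkeeping with countable bases and Baire category.
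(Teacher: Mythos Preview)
Your proof is correct and follows essentially the same approach as the paper, which for (2)$\Rightarrow$(3) also uses that the polynomial orbit of $T^m x$ is the $(T\times\cdots\times T)^m$-image of that of $x$ (hence dense) together with minimality to place an orbit point in $U$, and for (3)$\Rightarrow$(1) gives the identical Baire category construction with a countable base. (A minor aside: the $T$-invariance of $Y$ in your (2)$\Rightarrow$(3) step only requires that powers of $T$ commute, not that the $p_i$ are polynomials.)
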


\begin{proof}
(1) $\Rightarrow$ (2) is obvious.

(2) $\Rightarrow$ (3):
Assume there is some $x\in X$ such that
the set
\[
\{(T^{p_1(n)}x,\ldots, T^{p_d(n)}x):n\in \mathbb{Z}\}
\]
is dense in $X^d$.
It is clear that for any $m\in \mathbb{Z}$, the set
\[
X(x,m):= \{\big(T^{p_1(n)}(T^mx),\ldots, T^{p_d(n)}(T^mx)\big):n\in \mathbb{Z}\}
\]
is dense in $X^d$.

Fix non-empty open subsets $U,V_1,\ldots,V_d$ of $X$.
As $(X,T)$ is minimal, there is some $m\in \mathbb{N}$ with $T^mx\in U$.
Notice $ X(x,m)$ is dense in $X^d$,
we can choose some $n\in \mathbb{Z}$ such that
$T^{p_i(n)}(T^mx)\in V_i$ for $i=1,\ldots,d$, which implies
\[
T^mx\in U\cap T^{-p_1(n)}V_1\cap\ldots \cap T^{-p_d(n)}V_d.
\]

(3) $\Rightarrow$ (1):
Assume that for any given non-empty open subsets $U,V_1,\ldots,V_d$ of $X$,
there is some $n\in \mathbb{Z}$ such that
$U\cap T^{-p_1(n)}V_1\cap\ldots \cap T^{-p_d(n)}V_d\neq \emptyset.$

Let $\mathcal{F}$ be a countable base of $X$, and let
\[
\Omega:=\bigcap_{V_1,\ldots,V_d\in \mathcal{F}} \bigcup_{n\in \mathbb{Z}}
T^{-p_1(n)}V_1\cap\ldots \cap T^{-p_d(n)}V_d.
\]
Then it is easy to see that the dense $G_\delta$ subset $\Omega$ is what we need.
\end{proof}

The following result can be derived from
%the Frantzikinakis and Kra's equidistribution results for totally minimal nilsystems
\cite[Theorem 1.2]{FK05}.

\begin{cor}\label{uniform-dis}
Let $(X=L/\Gamma,T)$ be a totally minimal nilsystem,
and assume that $\{p_1,\ldots,p_d\}$ is an independent family of integer polynomials.
Then there is some $x\in X$ such that the set
\[
\{ (T^{p_1(n)}x,\ldots,T^{p_d(n)}x):n\in \mathbb{Z}\}
\]
is dense in $X^d$.
\end{cor}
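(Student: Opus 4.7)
The plan is to leverage the known equivalence between total minimality and total ergodicity for nilsystems, and then invoke directly the equidistribution theorem of Frantzikinakis and Kra. First, I would recall the fact (cited in the paper's introduction, referring to \cite{LA,Le05A,PW}) that for a minimal nilsystem $(X=L/\Gamma,T)$, total minimality is equivalent to total ergodicity with respect to the Haar measure $\mu_X$ on $X$. Thus, under the hypothesis that $(X,T)$ is totally minimal, the system $(X,\mathcal{B}(X),\mu_X,T)$ is totally ergodic.

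Next, I would apply \cite[Theorem 1.2]{FK05} to the totally ergodic nilsystem $(X,\mu_X,T)$ with the independent family $\{p_1,\ldots,p_d\}$. That theorem produces a Haar-full subset (in fact typically all points satisfying a suitable equidistribution criterion) of $x\in X$ such that the sequence $\bigl(T^{p_1(n)}x,\ldots,T^{p_d(n)}x\bigr)_{n\in\mathbb{Z}}$ is uniformly distributed on $X^d$ with respect to the product Haar measure $\mu_X^d$. Since $X^d$ is compact and $\mu_X^d$ has full support (being the Haar measure on the nilmanifold $(L/\Gamma)^d = L^d/\Gamma^d$), uniform distribution on $X^d$ implies density in $X^d$.

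In particular, one can pick any such $x\in X$ and conclude that $\{(T^{p_1(n)}x,\ldots,T^{p_d(n)}x):n\in\mathbb{Z}\}$ is dense in $X^d$, which is the statement of the corollary.

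I do not anticipate a real obstacle here: the corollary is essentially a direct transcription of \cite[Theorem 1.2]{FK05} once one invokes the standard equivalence between total minimality and total ergodicity for nilsystems. The only mildly subtle point is to note that \cite{FK05} is stated for $n\in\mathbb{N}$ while we want $n\in\mathbb{Z}$, but this is immediate since density of the forward polynomial orbit already yields density in $X^d$, and therefore a fortiori density of the two-sided orbit.
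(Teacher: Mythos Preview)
Your proposal is correct and matches the paper's treatment: the paper does not give an explicit proof but simply states that the corollary can be derived from \cite[Theorem 1.2]{FK05}, which is exactly the route you outline (total minimality $\Leftrightarrow$ total ergodicity for nilsystems, then equidistribution from \cite{FK05} gives density).
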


By Theorem \ref{system-of-order-infi} and Corollary \ref{uniform-dis} we have:\

\begin{cor}\label{uni-distributed-AA}
Let $(X,T)$ be a totally minimal system
and assume that $\{p_1,\ldots,p_d\}$ is an independent family of integer polynomials.
If $(X,T)$ is an almost one to one extension of an $\infty$-step pro-nilsystem,
then there is some $x\in X$ such that the set
\[
\{(T^{p_1(n)}x,\ldots, T^{p_d(n)}x):n\in \mathbb{Z}\}
\]
is dense in $X^d$.
\end{cor}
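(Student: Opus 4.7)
The plan is to verify condition (3) of Lemma \ref{equi-condition} for $(X,T)$, since this is equivalent to condition (2) which is exactly what the corollary asserts. Let $\pi:(X,T)\to (Y,T)$ denote the given almost one to one factor map onto an $\infty$-step pro-nilsystem. Because $(X,T)$ is totally minimal and $\pi$ intertwines the $T$-actions, $(Y,T)$ is totally minimal as well. So I need to show: for all non-empty open sets $U,V_1,\ldots,V_d\subset X$, there is some $n\in\mathbb{Z}$ with
\[
U\cap T^{-p_1(n)}V_1\cap\ldots\cap T^{-p_d(n)}V_d\neq\emptyset.
\]

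\emph{Step 1: transfer the problem from $X$ to $Y$.} By the almost one to one hypothesis, the set $\Omega_\pi:=\{x\in X:\pi^{-1}(\{\pi(x)\})=\{x\}\}$ is a dense $G_\delta$. Pick $x_0\in U\cap \Omega_\pi$ and $x_i\in V_i\cap \Omega_\pi$ for $i=1,\ldots,d$. At each such $x_i$ the preimage map is upper semi-continuous with singleton value, so there exist open neighborhoods $W$ of $\pi(x_0)$ and $W_i$ of $\pi(x_i)$ in $Y$ with $\pi^{-1}(W)\subset U$ and $\pi^{-1}(W_i)\subset V_i$. Consequently, if I can find $n\in\mathbb{Z}$ with $W\cap T^{-p_1(n)}W_1\cap\ldots\cap T^{-p_d(n)}W_d\neq\emptyset$ in $Y$, then choosing any $z\in X$ projecting to a point of this intersection produces the required point of $U\cap T^{-p_1(n)}V_1\cap\ldots\cap T^{-p_d(n)}V_d$.

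\emph{Step 2: verify the open-set intersection property on $Y$.} By Theorem \ref{system-of-order-infi}, $Y=\lim_{\longleftarrow}\{Y_j\}_{j\in\mathbb{N}}$ is the inverse limit of a sequence of minimal nilsystems with factor maps $\pi_j:Y\to Y_j$. Since $(Y,T)$ is totally minimal, so is every $(Y_j,T)$. The inverse-limit topology admits a basis of sets of the form $\pi_j^{-1}(O)$ with $O\subset Y_j$ open, so by choosing $j$ large enough I can find non-empty open sets $U^{(j)},W_1^{(j)},\ldots,W_d^{(j)}\subset Y_j$ with $\pi_j^{-1}(U^{(j)})\subset W$ and $\pi_j^{-1}(W_i^{(j)})\subset W_i$. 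Now apply Corollary \ref{uniform-dis} to the totally minimal nilsystem $(Y_j,T)$ and the independent family $\{p_1,\ldots,p_d\}$: this yields a point of $Y_j$ whose polynomial orbit is dense in $Y_j^d$, hence by the equivalence (2)$\Leftrightarrow$(3) of Lemma \ref{equi-condition} applied to $Y_j$ there exists $n\in\mathbb{Z}$ with $U^{(j)}\cap T^{-p_1(n)}W_1^{(j)}\cap\ldots\cap T^{-p_d(n)}W_d^{(j)}\neq\emptyset$. Pulling back by $\pi_j$ produces the required $n$ on $Y$, which by Step 1 completes the proof.

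I do not expect a serious obstacle here: the proof is essentially a two-step lifting argument in which the hard analytic work is carried out inside Corollary \ref{uniform-dis} (which rests on Frantzikinakis--Kra equidistribution for totally ergodic nilsystems). The only care needed is the topological bookkeeping — translating the non-emptiness condition across the almost one to one map $\pi$ via upper semi-continuity of singleton fibers, and across the inverse-limit projections $\pi_j$ via the basic-open-set structure — and verifying that total minimality is preserved by both constructions.
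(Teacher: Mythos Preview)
The proposal is correct and takes essentially the same approach as the paper, which simply records the corollary as a consequence of Theorem \ref{system-of-order-infi} and Corollary \ref{uniform-dis}. You have carefully spelled out the two lifting steps (through the almost one to one extension via upper semi-continuity at singleton fibers, and through the inverse-limit projections via basic open sets) that the paper leaves implicit, using Lemma \ref{equi-condition} to pass between the density formulation and the open-set intersection formulation at each stage.
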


\subsection{Polynomial recurrence}\

\medskip

Recall that a collection $\mathcal{F}$ of subsets of $\mathbb{Z}$ is a \emph{family}
if it is hereditary upward, i.e.,
$F_1 \subset F_2$ and $F_1 \in \mathcal{F}$ imply $F_2 \in \mathcal{F}$.
A family $\mathcal{F}$ is called \emph{proper} if it is neither empty nor the entire power set of $\mathbb{Z}$,
or, equivalently if $\mathbb{Z}\in \mathcal{F}$ and $\emptyset\in \mathcal{F}$.
For a family $\mathcal{F}$ its \emph{dual} is the family
$\mathcal{F}^*:=\{   F\subset \mathbb{Z}: F\cap F' \neq \emptyset \;\mathrm{for} \; \mathrm{all}\; F'\in \mathcal{F} \} $.
It is not hard to see that
$\mathcal{F}^*=\{F\subset \mathbb{Z}:\mathbb{Z}\backslash F\notin \mathcal{F}\}$, from which we have that if $\mathcal{F}$ is a family then $(\mathcal{F}^*)^*=\mathcal{F}$.
If a family $\mathcal{F}$ is closed under finite intersections and is proper, then it is called a \emph{filter}.
A family $\mathcal{F}$ has the {\it Ramsey property} if $A = A_1\cup A_2 \in \mathcal{F}$ implies that $A_1 \in \mathcal{F}$
or $A_2 \in F$. It is well known that a proper family has the Ramsey property if and
only if its dual $\mathcal{F}^*$ is a filter \cite{FH}.

\medskip

%Let $\{b_i\}_{i\in I}$ be a finite or infinite sequence in $\mathbb{Z}$.
%One defines
%\[
%FS(\{b_i\}_{i\in I}):=\big\{\sum_{i\in \alpha}b_i:\alpha\;
%\text{is a finite non-empty subset of}\;\; I\big\} \backslash \{0\}.
%\]

For $j\in \mathbb{N}$ and a finite subset $\{p_1, \ldots, p_j\}$ of $\mathbb{Z}$, the
\emph{finite IP-set of length $j$} generated by $\{p_{1}, \ldots, p_j\}$
is the set
\[
\big\{p_{1}\epsilon_{1}+ \ldots+ p_j\epsilon_j: \epsilon_1,\ldots,\epsilon_j\in \{0,1\}\big\} \backslash \{0\}.
\]
The collection of all sets containing finite IP-sets with arbitrarily long lengths is denoted by $\mathcal{F}_{fip}$.

\begin{lemma}\cite[Lemma 8.1.6]{HSY16}
$\mathcal{F}_{fip}$ has the Ramsey property.
\end{lemma}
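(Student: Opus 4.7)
The plan is to deduce the Ramsey property for $\mathcal{F}_{fip}$ from a Folkman-type partition theorem on the Boolean lattice by arguing via the contrapositive. Suppose $A_1, A_2 \notin \mathcal{F}_{fip}$; I wish to show that $A_1 \cup A_2 \notin \mathcal{F}_{fip}$. By hypothesis there exist integers $k_1, k_2$ such that $A_i$ contains no finite IP-set of length $k_i$, and I set $k := \max\{k_1, k_2\}$; note that for $j \geq k_i$, $A_i$ also contains no IP-set of length $j$, since any sub-generating set produces a sub-IP-set.

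The combinatorial tool I will invoke is the following form of Folkman's theorem (also attributed to Rado and Sanders): for every $r, k \in \mathbb{N}$, there exists $N = N(r,k) \in \mathbb{N}$ such that for any $r$-coloring $\chi : 2^{\{1,\ldots,N\}} \setminus \{\emptyset\} \to \{1,\ldots,r\}$, one can find pairwise disjoint nonempty subsets $I_1, \ldots, I_k \subset \{1,\ldots,N\}$ and a single color $c$ with $\chi\bigl(\bigcup_{\ell \in S'} I_\ell\bigr) = c$ for every nonempty $S' \subset \{1,\ldots,k\}$.

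Granted this, I fix $N = N(2,k)$ and assume for contradiction that $A_1 \cup A_2$ contains a finite IP-set of length $N$, generated by $\{p_1, \ldots, p_N\}$. Defining $\chi(S) := i$ whenever $\sum_{j \in S} p_j \in A_i$ (breaking ties in favor of $i = 1$), Folkman's theorem produces disjoint nonempty $I_1, \ldots, I_k$ and a color $c \in \{1,2\}$ such that $\sum_{j \in \bigcup_{\ell \in S'} I_\ell} p_j \in A_c$ for every nonempty $S' \subset \{1,\ldots,k\}$. Putting $q_\ell := \sum_{j \in I_\ell} p_j$ and using the disjointness of the $I_\ell$, I see that $\{q_1, \ldots, q_k\}$ generates a finite IP-set of length $k$ entirely contained in $A_c$; since $k \geq k_c$, this contradicts the choice of $k_c$, so $A_1 \cup A_2$ admits no IP-set of length $N(2,k)$ and thus $A_1 \cup A_2 \notin \mathcal{F}_{fip}$.

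The main obstacle is justifying the Folkman-type partition statement itself; this is a classical Ramsey-theoretic result, and I would expect the paper either to cite it directly or to include a short self-contained proof, for instance by iterating Ramsey's theorem for $\ell$-uniform hypergraphs over $\ell = 1, \ldots, k$ on successively restricted color classes, or by a compactness reduction to Hindman's theorem on $\mathbb{N}$.
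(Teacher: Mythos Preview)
The paper does not supply a proof of this lemma; it simply records the statement and cites \cite[Lemma~8.1.6]{HSY16}. Your argument via the Folkman--Rado--Sanders finite-unions theorem is correct and is the standard route: the Ramsey property for $\mathcal{F}_{fip}$ is essentially a reformulation of the finite-sums theorem, and the contrapositive reduction you carry out is exactly how one unwinds that equivalence.

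One small technical point is worth tightening. The paper's definition takes the generators $\{p_1,\ldots,p_j\}$ to be a genuine \emph{subset} of $\mathbb{Z}$, so the ``length'' is the cardinality of the generating set. In your construction the derived generators $q_\ell=\sum_{j\in I_\ell}p_j$ need not be pairwise distinct (disjointness of the $I_\ell$ does not prevent two subset-sums of the $p_j$ from coinciding), and if they collapse to fewer than $k_c$ distinct values you lose the contradiction. This is easily repaired: either choose the original generators $p_1,\ldots,p_N$ so that all $2^N-1$ nonempty subset-sums are distinct (which can be arranged since $\mathcal{F}_{fip}$ is upward hereditary and one may pass to a sub-IP-set with this property, or simply take $N$ larger and extract such a system), or invoke a version of Folkman's theorem that directly outputs generators with all finite sums distinct. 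With that routine adjustment, your proof is complete.
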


Then we have:

\begin{cor}\label{filter}
$\mathcal{F}_{fip}^*$ is a filter.
\end{cor}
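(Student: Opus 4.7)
The plan is to apply the general equivalence recalled just before the statement: \emph{a proper family has the Ramsey property if and only if its dual is a filter}. Since the previous lemma already supplies the Ramsey property for $\mathcal{F}_{fip}$, essentially all that remains is a routine verification that $\mathcal{F}_{fip}$ is proper.

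First I would check properness of $\mathcal{F}_{fip}$. To see that $\mathbb{Z}\in\mathcal{F}_{fip}$, it suffices to exhibit, for each $j\in\mathbb{N}$, a finite IP-set of length $j$ contained in $\mathbb{Z}$; for instance the one generated by $\{1,2,4,\ldots,2^{j-1}\}$ works since sums of distinct powers of $2$ are nonzero integers. To see that $\emptyset\notin\mathcal{F}_{fip}$, note that a finite IP-set is by definition nonempty, so $\emptyset$ cannot contain finite IP-sets of arbitrarily long length.

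Next I invoke the Ramsey property of $\mathcal{F}_{fip}$ (the lemma quoted immediately above the corollary) together with the equivalence stated a few lines earlier in the paper: for a proper family, the Ramsey property is equivalent to the dual being a filter. Combining these, $\mathcal{F}_{fip}^*$ is a filter, as required.

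There is no real obstacle here: the corollary is essentially a one-line consequence of the preceding lemma plus the general duality principle for families. The only substantive content is the properness check, which is immediate from the definition of a finite IP-set.
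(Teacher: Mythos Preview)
Your argument is correct and follows exactly the route the paper intends: the corollary is an immediate consequence of the preceding lemma (the Ramsey property of $\mathcal{F}_{fip}$) together with the general fact, recalled just above, that a proper family has the Ramsey property if and only if its dual is a filter. Your explicit verification that $\mathcal{F}_{fip}$ is proper is the only thing the paper leaves implicit, and your check is fine.
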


\medskip

For a system $(X,T)$, $x\in X$, a non-constant integer polynomial $p$ and a non-empty open subset $V$ of $X$, set
\[
N(x,V)=\{ n\in \mathbb{Z}:T^nx\in V\}\quad
\mathrm{and}\quad
N_p(x,V)=\{n\in \mathbb{Z}:T^{p(n)}x\in V\}.
\]

\begin{prop}\cite[Proposition 8.1.5]{HSY16}\label{RP-infi}
Let $(X,T)$ be a minimal system and $(x,y)\in X\times X\backslash \Delta$.
Then $(x,y)\in \mathbf{RP}^{[\infty]}(X)$ if and only if $N(x,V)\in \mathcal{F}_{fip}$
for every open neighborhood $V$ of $y$.
\end{prop}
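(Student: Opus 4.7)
The plan is to reduce both directions to the following equivalent form of the regionally proximal relation of order $d$, which holds in every minimal system: $(x,y)\in\mathbf{RP}^{[d]}(X)$ if and only if for every $\delta>0$ there exists $\vec{n}=(n_1,\ldots,n_d)\in\mathbb{Z}^d$ with $\rho(T^{\vec{n}\cdot\epsilon}x,y)<\delta$ for every $\epsilon\in\{0,1\}^d\setminus\{\vec{0}\}$. Informally, in the minimal setting one may drop the perturbation of $x$ and absorb the perturbation of $y$ into the orbit of $x$ itself. I will take this no-perturbation characterization as the main technical input; it is a standard consequence of the Shao--Ye analysis \cite{SY12} behind Theorems \ref{cube-minimal} and \ref{lift-property}.

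Granted this characterization, necessity is immediate. Assume $(x,y)\in\mathbf{RP}^{[\infty]}(X)=\bigcap_{d\geq 1}\mathbf{RP}^{[d]}(X)$ and let $V$ be an open neighborhood of $y$. For each $d\geq 1$, pick $\delta>0$ with $B(y,\delta)\subset V$; applying the characterization at level $d$ produces $\vec{n}\in\mathbb{Z}^d$ such that $T^{\vec{n}\cdot\epsilon}x\in V$ for every $\epsilon\in\{0,1\}^d\setminus\{\vec{0}\}$, so the finite IP-set of length $d$ generated by $\{n_1,\ldots,n_d\}$ sits inside $N(x,V)$. Letting $d$ range over $\mathbb{N}$ gives $N(x,V)\in\mathcal{F}_{fip}$.

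Sufficiency is equally direct. Fix $d\geq 1$ and $\delta>0$, and set $V=B(y,\delta)$; the hypothesis $N(x,V)\in\mathcal{F}_{fip}$ furnishes integers $n_1,\ldots,n_d$ whose generated IP-set lies in $N(x,V)$, i.e., $T^{\sum_{i} n_i\epsilon_i}x\in V$ for every $\epsilon\neq\vec{0}$. The characterization then places $(x,y)\in\mathbf{RP}^{[d]}(X)$, and since $d$ was arbitrary, $(x,y)\in\mathbf{RP}^{[\infty]}(X)$.

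The main obstacle is establishing the no-perturbation characterization itself. I would prove it by taking witnesses $(x_k',y_k',\vec{n}_k)$ from the definition with parameters $\delta_k\to 0$, passing to a subsequence along which the cube configurations $(T^{\vec{n}_k\cdot\epsilon}x_k')_{\epsilon}$ and $(T^{\vec{n}_k\cdot\epsilon}y_k')_{\epsilon}$ converge in $X^{2^d}$ to two points that agree outside the $\vec{0}$-coordinate (where they equal $x$ and $y$ respectively), and then using the closed invariant equivalence relation structure of $\mathbf{RP}^{[d]}$ (Theorem \ref{cube-minimal}) together with the lifting property (Theorem \ref{lift-property}) to transport this limit configuration back to an unperturbed witness based at $(x,y)$. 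The gap between the paper's $\vec{n}\in\mathbb{N}^d$ and the $\vec{n}\in\mathbb{Z}^d$ arising from IP-sets is a minor technical matter handled by the $T^{-1}$-invariance of $\mathbf{RP}^{[d]}$.
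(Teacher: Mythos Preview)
The paper does not supply a proof of this proposition; it is quoted verbatim from \cite[Proposition~8.1.5]{HSY16} and used as a black box. There is therefore no in-paper argument to compare against.

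Your argument is correct and is in fact the standard one. The no-perturbation characterization you invoke---that in a minimal system $(x,y)\in\mathbf{RP}^{[d]}(X)$ if and only if for every $\delta>0$ there exists $\vec{n}$ with $\rho(T^{\vec{n}\cdot\epsilon}x,y)<\delta$ for all nonzero $\epsilon$---is exactly the content of \cite[Theorem~3.1]{SY12}, and once it is granted both directions are the one-line arguments you give. This is also how the cited source \cite{HSY16} proceeds. One small caveat: your closing paragraph sketches a compactness-and-lifting route to the characterization itself, but the actual proof in \cite{SY12} is more delicate, passing through the minimality of the face-cube dynamical systems rather than a direct limit argument; since you explicitly flag the characterization as an external input rather than claiming to prove it, this is not a gap.
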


%The following result can be derived from
%\cite[Theorem 0.2]{BL18}
% and also

 The following proposition follows from the argument in the proof of \cite[Theorem 8.1.7]{HSY16},
 which also can be derived from \cite[Theorem 0.2]{BL18}.
%we can show:
%The following result
%provides a characterization of pre-nilsystems in terms of IP
%0-recurrence

\begin{prop}\label{fip-family}
Let $(X,T)$ be a minimal $\infty$-step pro-nilsystem.
Then for any $x\in X$, $N(x,V)\in \mathcal{F}_{fip}^*$
for every open neighbourhood $V$ of $x$.
\end{prop}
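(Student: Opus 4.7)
The plan is to reduce the claim to the case of a minimal nilsystem via Theorem~\ref{system-of-order-infi}, and then prove the nilsystem case by induction on the nilpotency degree, using the filter property of $\mathcal{F}_{fip}^*$ (Corollary~\ref{filter}) as the glue. Concretely, write $(X,T)=\lim\limits_{\longleftarrow}\{(X_i,T_i)\}_{i\in\mathbb{N}}$ with each $(X_i,T_i)$ a minimal nilsystem, and let $\phi_i\colon X\to X_i$ denote the canonical factor maps. Given $x\in X$ and an open neighborhood $V$ of $x$, pick $i$ large enough and an open neighborhood $U$ of $\phi_i(x)$ so that $\phi_i^{-1}(U)\subseteq V$; then $N(\phi_i(x),U)\subseteq N(x,V)$, and since $\mathcal{F}_{fip}^*$ is upward closed as a family, it suffices to verify the conclusion inside each nilsystem $(X_i,T_i)$. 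I therefore assume $(X,T)=L/\Gamma$ is itself a minimal $k$-step nilsystem, with $T$ left-translation by some $\tau\in L$.

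I next unwind the duality: $N(x,V)\in\mathcal{F}_{fip}^*$ iff $\mathbb{Z}\setminus N(x,V)\notin\mathcal{F}_{fip}$, i.e., iff there exists $j\in\mathbb{N}$ such that every length-$j$ finite IP-set meets $N(x,V)$. Fixing $\epsilon>0$ small enough that $B(x,\epsilon)\subseteq V$, the goal reduces to: find $j$ such that for every choice of $p_1,\ldots,p_j\in\mathbb{Z}$, some non-empty $F\subseteq\{1,\ldots,j\}$ satisfies $\rho(T^{\sum_{i\in F}p_i}x,x)<\epsilon$.

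I then prove this statement by induction on the nilpotency degree $k$. The base case $k=1$ is Kronecker: $L$ is abelian and $T^n$ acts as a translation on the compact abelian group $X$; pigeonhole on the $j+1$ partial sums $S_l=\sum_{i\le l}p_i$ (with $j$ exceeding the $\epsilon$-covering number of $X$) produces indices $a<b$ with $\rho(T^{S_b}x,T^{S_a}x)<\epsilon$, so that $S_b-S_a=p_{a+1}+\cdots+p_b$ is a non-empty IP-sum witnessing the return. For the inductive step, I quotient by the last non-trivial term $L_k$ of the descending central series (which is central in $L$) to obtain a factor map $\pi\colon X\to\overline{X}$ onto a minimal $(k-1)$-step nilsystem; the inductive hypothesis yields some $j_1$ after which every length-$j_1$ finite IP-set already returns $\pi(x)$ close to itself, and along such returns the residual $L_k$-displacement of $\tau^n$ is a translation on a compact abelian group, so a second application of the base case yields a further length $j_2$. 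Corollary~\ref{filter} then combines these two finite-IP constraints into a single $j$ that works.

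The main obstacle is the bookkeeping in the inductive step: the dependence of the $L_k$-component of $\tau^{\sum_{i\in F}p_i}$ on the IP-subset $F$ passes through commutator corrections of Hall--Petresco / Mal'cev type, so verifying that the iterated pigeonhole actually closes requires tracking polynomial identities inside $L$. Since this is precisely the content of the known multiple IP-recurrence results for nilsystems, at that point one may legitimately invoke \cite[Theorem 8.1.7]{HSY16} or \cite[Theorem 0.2]{BL18}, which package exactly the calculation needed and close the induction.
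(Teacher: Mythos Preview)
The paper does not supply a self-contained proof of this proposition: it simply records that the statement follows from the argument in \cite[Theorem~8.1.7]{HSY16}, or alternatively from \cite[Theorem~0.2]{BL18}. Your proposal ultimately lands in the same place, since your inductive step explicitly defers to those same references. In that sense your approach coincides with the paper's, with the added value that you spell out the inverse-limit reduction via Theorem~\ref{system-of-order-infi} and give a clean elementary argument for the Kronecker base case; both of these are correct and worth including.

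One caveat about your sketch of the inductive step: the sentence ``Corollary~\ref{filter} then combines these two finite-IP constraints into a single $j$ that works'' overstates what the filter property buys you. The filter property of $\mathcal{F}_{fip}^*$ lets you intersect two sets each already known to lie in $\mathcal{F}_{fip}^*$, but the ``$L_k$-displacement'' condition you describe is not a single fixed set of integers---it depends on which IP-subset $F$ was selected at the first stage, and the commutator corrections you mention mean this dependence is polynomial rather than affine. What is actually needed is an \emph{iterated} finite-IP pigeonhole (one first refines to a sub-IP-system on which the $(k-1)$-step projection returns, and then pigeonholes again inside that sub-IP-system), and that iterated refinement is exactly the content of the IP-recurrence machinery in \cite{HSY16,BL18}. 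So your final appeal to those references is genuinely necessary to close the argument, not merely a convenience; the filter property on its own does not finish the induction.
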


The following proposition is from \cite[Section 2.11]{Le05B}.

\begin{prop}\label{poly-in-nilsystem}
Let $(X,T)$ be a nilsystem, $x\in X$ and an open neighborhood $U$ of $x$.
For any non-constant integer polynomial $p$ with $p(0)=0$,
we can find another `larger' nilsystem $(Y, S)$ with $y\in Y$ and
 an open neighborhood $V$ of $y$ such that
\[
\{n\in \mathbb{Z}:T^{p(n)}x\in U\}\supset \{n\in \mathbb{Z}:S^ny\in V\}.
\]
\end{prop}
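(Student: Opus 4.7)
The plan is to apply Leibman's ``polynomial-to-linear'' construction, which realizes any polynomial orbit on a nilsystem as the continuous image of a linear orbit on a (typically larger, higher-step) nilsystem.

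Writing $X=L/\Gamma$ with $L$ a nilpotent Lie group and $T$ the left translation by some $\tau\in L$, the condition $p(0)=0$ ensures that $g(n):=\tau^{p(n)}$ is a polynomial sequence in $L$ with $g(0)=e$ in Leibman's sense. I would then invoke the construction from \cite[Section 2.11]{Le05B}: to such a polynomial sequence one associates a bigger nilpotent Lie group $\tilde L$, a discrete cocompact subgroup $\tilde\Gamma\subset\tilde L$, a distinguished element $\sigma\in\tilde L$, and a continuous map $\pi:Y:=\tilde L/\tilde\Gamma\to X$, for which there exists $y\in Y$ with
\[
\pi(y)=x\quad\text{and}\quad\pi(S^n y)=\tau^{p(n)}x=T^{p(n)}x\qquad(n\in\mathbb{Z}),
\]
where $S$ denotes translation by $\sigma$. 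In other words, the polynomial orbit of $x$ on $X$ is the image under $\pi$ of the honest linear orbit of $y$ on the larger nilsystem $(Y,S)$.

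With this in hand the proposition is immediate. Given the open neighborhood $U$ of $x$, set $V:=\pi^{-1}(U)$. Continuity of $\pi$ makes $V$ open in $Y$, and $\pi(y)=x\in U$ shows $y\in V$. Whenever $S^n y\in V$ one has $T^{p(n)}x=\pi(S^n y)\in U$, so
\[
\{n\in\mathbb{Z}:S^n y\in V\}\subset\{n\in\mathbb{Z}:T^{p(n)}x\in U\},
\]
which is precisely the desired inclusion.

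The substantive step, and the one I expect to be the main obstacle, is producing $(Y,S)$, $y$ and $\pi$ above. This combines two classical ingredients: the Hall--Petresco / Lazard expansion of $g(n)=\tau^{p(n)}$ into a product of binomial powers of elements drawn from successive commutator subgroups of $L$, together with the semidirect-product machinery that turns each ``coefficient'' of that expansion into a coordinate on which translation by a single element $\sigma\in\tilde L$ acts linearly. Leibman carries this out in full generality in \cite{Le05B}; the only point I would need to verify is that the construction there gives a map $\pi$ that is continuous on all of $Y$, which holds because $\pi$ is induced by a Lie-group homomorphism $\tilde L\to L$ descending to the nilmanifold quotients.
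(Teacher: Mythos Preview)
Your proposal is correct and follows exactly the same approach as the paper: the paper does not supply its own argument for this proposition at all, but simply attributes it to \cite[Section~2.11]{Le05B}. Your write-up is in fact more detailed than the paper's treatment, spelling out how Leibman's polynomial-to-linear lift yields the intertwining $\pi(S^n y)=T^{p(n)}x$ and then taking $V=\pi^{-1}(U)$.
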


It follows Theorem \ref{system-of-order-infi} that
a minimal $\infty$-step pro-nilsystem is an inverse limit of minimal nilsystems.
By Propositions \ref{fip-family} and \ref{poly-in-nilsystem}
we can get:

\begin{prop}\label{infi-poly-rec}
Let $(X,T)$ be a minimal $\infty$-step pro-nilsystem
and let $p$ be a non-constant integer polynomial with $p(0)=0$.
Then for any $x\in X$,
$N_{p}(x,V)\in \mathcal{F}_{fip}^*$
for every open neighbourhood $V$ of $x$.
\end{prop}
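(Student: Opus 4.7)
The plan is to combine the three ingredients directly cited before the statement: the inverse limit description of $\infty$-step pro-nilsystems (Theorem \ref{system-of-order-infi}), the ``linearization'' of a polynomial orbit in a nilsystem (Proposition \ref{poly-in-nilsystem}), and the $\mathcal{F}_{fip}^*$-recurrence for minimal $\infty$-step pro-nilsystems (Proposition \ref{fip-family}). The first step is a reduction from the $\infty$-step pro-nilsystem to an honest nilsystem. Write $X=\lim_{\longleftarrow}\{X_i\}_{i\in\mathbb{N}}$ with each $X_i$ a minimal nilsystem and let $\pi_i:X\to X_i$ be the canonical factor maps. Given an open neighborhood $V$ of $x$, the inverse limit topology yields an index $i$ and an open neighborhood $U$ of $x_i:=\pi_i(x)$ in $X_i$ with $\pi_i^{-1}(U)\subset V$. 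Since $\pi_i$ intertwines the actions, one has the set-theoretic inclusion
\[
N_p(x,V)\supset N_p(x_i,U),
\]
so it suffices to prove that $N_p(x_i,U)\in \mathcal{F}_{fip}^*$ inside the nilsystem $(X_i,T)$.

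Next I would apply Proposition \ref{poly-in-nilsystem} to the nilsystem $(X_i,T)$, the point $x_i$, the open neighborhood $U$, and the polynomial $p$ (which satisfies $p(0)=0$ by hypothesis). This produces an auxiliary nilsystem $(Y,S)$, a point $y\in Y$ and an open neighborhood $W$ of $y$ with
\[
N_p(x_i,U)\supset N(y,W).
\]
Replacing $(Y,S)$ by the sub-nilsystem $(\overline{\mathcal{O}}(y,S),S)$ if necessary, we may assume it is a minimal nilsystem; this reduction is standard because closed orbits of nilsystems are again nilsystems on sub-nilmanifolds. A minimal nilsystem is, in particular, a minimal $\infty$-step pro-nilsystem, so Proposition \ref{fip-family} applies to $(Y,S)$, $y$ and its open neighborhood $W$, giving $N(y,W)\in \mathcal{F}_{fip}^*$.

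To finish, I would invoke the upward hereditary property of $\mathcal{F}_{fip}^*$: if $A\in \mathcal{F}_{fip}^*$ and $A\subset B$, then for any $F\in\mathcal{F}_{fip}$ we have $B\cap F\supset A\cap F\neq\emptyset$, so $B\in\mathcal{F}_{fip}^*$. Chaining the two inclusions above,
\[
N_p(x,V)\;\supset\; N_p(x_i,U)\;\supset\; N(y,W)\in \mathcal{F}_{fip}^*,
\]
yields $N_p(x,V)\in \mathcal{F}_{fip}^*$, as required.

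The only potentially delicate point is step two, where one must ensure that the ``larger'' nilsystem produced by Proposition \ref{poly-in-nilsystem} can be taken to be minimal, so that Proposition \ref{fip-family} is applicable. This is handled by passing to the orbit closure of $y$ in $(Y,S)$, using the fact that orbit closures in nilsystems are themselves nilsystems; any other route (e.g.\ choosing $y$ in a minimal subset and shrinking $W$ accordingly) works equally well. Apart from this bookkeeping, the argument is a clean three-line concatenation of the hypotheses already assembled in this section.
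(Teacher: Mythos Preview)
Your proposal is correct and follows exactly the route the paper indicates: the paper does not give a detailed proof but merely lists the three ingredients (Theorem~\ref{system-of-order-infi}, Proposition~\ref{fip-family}, Proposition~\ref{poly-in-nilsystem}), and you have filled in the natural concatenation of these correctly. The one point you flag---that the orbit closure of $y$ in the nilsystem $(Y,S)$ is again a minimal nilsystem---is indeed standard (nilsystems are distal so orbit closures are minimal, and by Leibman's results they are sub-nilmanifolds), so Proposition~\ref{fip-family} applies; this bookkeeping is implicit in the paper as well.
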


Notice that $\mathcal{F}_{fip}^*$ is a filter,
thus by Proposition \ref{infi-poly-rec} we have:

\begin{cor}\label{return-time-AA}
Let $(X,T)$ be an almost one to one extension of a minimal $\infty$-step pro-nilsystem
and let $p_1,\ldots,p_d$ be non-constant integer polynomials with $p_i(0)=0$ for $i=1,\ldots,d$.
Then there exists a dense $G_\delta$ subset $\Omega$ of $X$
such that for any $x\in \Omega$,
$\bigcap_{i=1}^dN_{p_i}(x,V)\in \mathcal{F}_{fip}^*$
for every open neighbourhood $V$ of $x$.
\end{cor}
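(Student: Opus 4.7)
The plan is to reduce the claim for $X$ to the corresponding statement for its underlying minimal $\infty$-step pro-nilsystem $Y$, using the almost one-to-one factor map $\pi:X\to Y$, the filter property of $\mathcal{F}_{fip}^*$, and the upward hereditariness of families. Let $\Omega$ be the dense $G_\delta$ subset of $X$ consisting of those $x$ with $\pi^{-1}(\{\pi(x)\})=\{x\}$ provided by the almost one-to-one hypothesis; I claim this $\Omega$ already witnesses the corollary.

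Fix $x\in\Omega$ and an open neighborhood $V$ of $x$ in $X$. Since $X\setminus V$ is compact and disjoint from the singleton fiber $\pi^{-1}(\{\pi(x)\})=\{x\}$, its image $\pi(X\setminus V)$ is closed in $Y$ and avoids $\pi(x)$, so one can choose an open neighborhood $W$ of $\pi(x)$ in $Y$ with $\pi^{-1}(W)\subset V$. By $\pi$-equivariance, whenever $n$ satisfies $T^{p_i(n)}\pi(x)\in W$ for every $i=1,\ldots,d$, we obtain $T^{p_i(n)}x\in\pi^{-1}(W)\subset V$ for every $i$, which yields the key inclusion
\[
\bigcap_{i=1}^{d} N_{p_i}(\pi(x),W) \;\subset\; \bigcap_{i=1}^{d} N_{p_i}(x,V).
\]

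Now Proposition \ref{infi-poly-rec} applied inside the minimal $\infty$-step pro-nilsystem $Y$ tells us that each $N_{p_i}(\pi(x),W)$ lies in $\mathcal{F}_{fip}^*$, and Corollary \ref{filter} says $\mathcal{F}_{fip}^*$ is a filter, hence closed under finite intersections. Thus the left-hand set belongs to $\mathcal{F}_{fip}^*$, and since $\mathcal{F}_{fip}^*$ is in particular a family (upward hereditary), the larger set $\bigcap_{i=1}^{d} N_{p_i}(x,V)$ also belongs to $\mathcal{F}_{fip}^*$, as required. The only mildly technical point is the neighborhood-shrinking step in $Y$, which is immediate from compactness together with the singleton-fiber property of $x\in\Omega$; once that is in place, the rest is a mechanical application of the filter and family properties of $\mathcal{F}_{fip}^*$.
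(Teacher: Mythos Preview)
Your proof is correct and follows exactly the route the paper intends: the paper records Corollary~\ref{return-time-AA} as an immediate consequence of Proposition~\ref{infi-poly-rec} together with the filter property of $\mathcal{F}_{fip}^*$ (Corollary~\ref{filter}), and you have simply written out the lifting step through the almost one-to-one factor map that this implicit argument requires. The compactness argument you use to find $W$ with $\pi^{-1}(W)\subset V$ from the singleton-fiber condition is standard and exactly what is needed.
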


\subsection{A useful lemma}\

To end this section we give a useful lemma which can be derived from
the proof of \cite[Theorem 5.6]{GHSWY20}.
For completeness, we include the proof here.
To do this, we need the following topological characteristic factor theorem.

\begin{theorem}\cite[Theorem 4.2]{GHSWY20}\label{key-thm}
Let $\pi:(X,T)\to (Y,T)$ be a factor map of minimal systems.
If $\pi$ is open and $X/ \mathbf{RP}^{[\infty]}(X)$ is a factor of $Y$,
then $Y$ is a $d$-step topological characteristic factor of $X$ for all $d\in \mathbb{N}$.
%That is, for all $d\in \mathbb{N}$
%there is a dense $G_\delta$ subset $\Omega_d$ of $X$ such that for each $x\in \Omega_d$,
%$L_x^d(X)=\overline{\mathcal{O}}(x^{(d)}, T\times \ldots \times T^d)$
%is $\pi^{(d)}$-saturated.
\end{theorem}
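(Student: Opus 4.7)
The plan is to produce a dense $G_\delta$ subset $\Omega \subseteq X$ such that $L_x^d(X)$ is $\pi \times \cdots \times \pi$-saturated for every $x \in \Omega$. A standard Baire category argument using a countable base $\mathcal{B}$ of $X$ reduces this to the following recurrence statement: for every tuple $(V_0, V_1, \ldots, V_d) \in \mathcal{B}^{d+1}$ with $\bigcap_{i=0}^d \pi(V_i) \neq \emptyset$, the set
\[
W(V_0,\ldots,V_d) := \bigcup_{n \in \mathbb{Z}} \bigl(V_0 \cap T^{-n} V_1 \cap \cdots \cap T^{-dn} V_d\bigr)
\]
is dense in $X$; one then takes $\Omega$ to be the intersection of these sets over all admissible tuples from $\mathcal{B}$.

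The key structural observation is that the hypothesis that $X_\infty$ is a factor of $Y$ forces $R_\pi \subseteq \mathbf{RP}^{[\infty]}(X)$, so any two points in the same $\pi$-fiber are $\infty$-regionally proximal. Moreover, openness of $\pi$ gives, via Theorem \ref{open-map}, that $y \mapsto \pi^{-1}(y)$ is continuous into the hyperspace $2^X$. These two facts together allow one to rephrase the condition $\bigcap_{i=0}^d \pi(V_i) \neq \emptyset$ as the existence of a common fiber $\pi^{-1}(y^*)$ meeting each $V_i$, and to choose representatives $x_i^* \in V_i \cap \pi^{-1}(y^*)$ whose pairwise $\mathbf{RP}^{[\infty]}$-relatedness can then be exploited.

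The heart of the argument is to show that for such a tuple of points $x_0^*, x_1^*, \ldots, x_d^*$ lying in a common $\pi$-fiber, one can find $n \in \mathbb{Z}$ with $T^{in}x_0^*$ close to $x_i^*$ for each $i = 1, \ldots, d$ simultaneously. I would invoke the return-time characterization of $\mathbf{RP}^{[\infty]}$ (Proposition \ref{RP-infi}) and the filter property of $\mathcal{F}_{fip}^*$ (Corollary \ref{filter}) to obtain such simultaneous returns, combined with the lifting property of $\mathbf{RP}^{[d]}$ from Theorem \ref{lift-property} and an induction on $d$ to transport saturation across coordinates.

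The main obstacle will be this simultaneous-powers step: the filter property delivers common return times under a single transformation, but combining them under the distinct powers $T, T^2, \ldots, T^d$ requires the hyperspace continuity of $\pi^{-1}$ to absorb the deformation between successive powers, together with a careful inductive scheme that propagates saturation across all $d$ coordinates. This is where openness of $\pi$ is indispensable --- without it the fibers $\pi^{-1}(y)$ could jump discontinuously, destroying the simultaneous approximation on which the whole argument rests.
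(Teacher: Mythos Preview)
This theorem is not proved in the present paper: it is quoted verbatim from \cite[Theorem~4.2]{GHSWY20} and used here only as a black box (most notably as the input to Lemma~\ref{fip-infi-fiber}). There is therefore no proof in this paper against which to compare your proposal.

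Evaluating your proposal on its own merits, there is a genuine gap at what you yourself call the ``heart of the argument''. You propose to combine Proposition~\ref{RP-infi} with the filter property of $\mathcal{F}_{fip}^*$ (Corollary~\ref{filter}) to obtain simultaneous returns. But Proposition~\ref{RP-infi} places the sets $N(x_0^*,V_i)$ in $\mathcal{F}_{fip}$, not in $\mathcal{F}_{fip}^*$; and $\mathcal{F}_{fip}$ is \emph{not} a filter (it merely has the Ramsey property), so you cannot intersect these return-time sets as you suggest. Even granting a common $n$ with $T^n x_0^* \in V_i$ for every $i$, that is still not what you need: saturation requires $T^{in} x_0^* \in V_i$, i.e., \emph{distinct powers} of $T$ hitting distinct targets at the same time $n$. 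Your sketch supplies no mechanism for this; the phrase ``hyperspace continuity to absorb the deformation between successive powers'' is an aspiration, not an argument.

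It is worth noting that the paper runs the implication in exactly the opposite direction: in Lemma~\ref{fip-infi-fiber} the saturation conclusion of Theorem~\ref{key-thm} is the \emph{hypothesis}, and from it one extracts $\mathcal{F}_{fip}$ sets of simultaneous returns under $T^{a_1},\ldots,T^{a_s}$. Your plan is essentially attempting to reverse that implication, which is the hard direction, and it is precisely what the full structure-theoretic argument of \cite{GHSWY20} is devoted to establishing.
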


With the help of the above powerful theorem we are able to show:

\begin{lemma}\label{fip-infi-fiber}
Let $\pi:(X,T)\to (Y,T)$ be a factor map of minimal systems.
If $\pi$ is open and $X/ \mathbf{RP}^{[\infty]}(X)$ is a factor of $Y$,
then for any distinct positive integers $a_1,\ldots,a_s$,
there is a dense $G_\delta$ subset $\Omega$ of $X$ such that for
any open subsets $V_0,V_1,\ldots,V_s$ of $X$ with
$\bigcap_{i=0}^s \pi(V_i)\neq \emptyset$ and any $z\in V_0 \cap \Omega$ with $\pi(z)\in \bigcap_{i=0}^s \pi(V_i)$,
there exists some $A\in \mathcal{F}_{fip}$
such that $T^{a_in}z\in V_i$ for every $i=1,\ldots,s$ and $n\in A$.
\end{lemma}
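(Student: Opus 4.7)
Plan. Set $d=\max_i a_i$; by Theorem~\ref{key-thm}, there is a dense $G_\delta$ set $\Omega\subset X$ such that for every $z\in\Omega$ the orbit closure $\overline{\mathcal{O}}((z,\ldots,z),T\times T^2\times\cdots\times T^d)\subset X^d$ is $\pi^d$-saturated. Projecting onto the coordinates indexed by $a_1,\ldots,a_s$, the orbit closure
\[
L_z := \overline{\mathcal{O}}\bigl((z,\ldots,z),\, T^{a_1}\times\cdots\times T^{a_s}\bigr)\subset X^s
\]
is $\pi^s$-saturated. Write $\sigma:X\to X_\infty$ for the factor map onto the maximal $\infty$-step pro-nilfactor of $X$, which by hypothesis factors through $Y$; set $z_\infty:=\sigma(z)$.

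Given $z\in\Omega$ and open $V_0,V_1,\ldots,V_s$ satisfying the hypothesis, for each $i$ pick $w_i\in V_i\cap\pi^{-1}(\pi(z))$; then $\sigma(w_i)=z_\infty$ and $(w_1,\ldots,w_s)\in L_z$ by $\pi^s$-saturation. We prove by induction on $j\ge 1$ the strengthened statement: for every open neighborhood $U$ of $z_\infty$ in $X_\infty$ there exist $p_1,\ldots,p_j\in\mathbb{Z}$ such that both $T^{a_iq_S}z\in V_i$ and $T^{a_iq_S}z_\infty\in U$ hold for every $i\in\{1,\ldots,s\}$ and every nonempty $S\subset\{1,\ldots,j\}$, where $q_S:=\sum_{k\in S}p_k$. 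The base case $j=1$ is handled by transitivity of $(z,\ldots,z)$ in $L_z$: its orbit visits every open neighborhood of $(w_1,\ldots,w_s)$, and a sufficiently small such neighborhood may be placed inside $V_1\times\cdots\times V_s\cap(\sigma^s)^{-1}(U\times\cdots\times U)$.

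In the inductive step one shrinks $U$ to a very small $U'\subset U$ and invokes the IH to produce $p_1,\ldots,p_j$. The task is to find $p_{j+1}$ so that, for every $S\subset\{1,\ldots,j\}$ (including $S=\emptyset$, with $q_\emptyset:=0$), $p_{j+1}+q_S$ lies in $R:=\bigcap_iN_{a_i}(z,V_i)$ and moreover $T^{a_i(p_{j+1}+q_S)}z_\infty\in U$. The pro-nil constraints reduce to requiring simultaneous return, under $T^{a_i}$, of the finitely many points $T^{a_iq_S}z_\infty\in U'$ to $U$; by Proposition~\ref{infi-poly-rec} applied in $X_\infty$ and the filter property of $\mathcal{F}_{fip}^*$ (Corollary~\ref{filter}), the set of admissible $p_{j+1}$ belongs to $\mathcal{F}_{fip}^*$. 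The $V_i$-membership constraints are then satisfied by re-running the base-case lifting with the modified open sets
\[
\tilde V_i := V_i\cap\bigcap_{\emptyset\neq S\subset\{1,\ldots,j\}}T^{-a_iq_S}V_i;
\]
the hypothesis $\pi(z)\in\bigcap_i\pi(\tilde V_i)$ is checked by using the $\pi^s$-saturation of $L_z$ to select, within the fiber $\pi^{-1}(\pi(z))$, a perturbation of $w_i$ that lies in $\tilde V_i$ (taking $U'$ small enough that the relevant $\pi^s$-fiber of the shifted tuple meets $\prod\tilde V_i$ by continuity). Combining the pro-nil $\mathcal{F}_{fip}^*$-set of candidates with this base-case lifting produces a single $p_{j+1}$ meeting all requirements, and the induction closes.

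The principal difficulty is the inductive step: many simultaneous conditions on the single integer $p_{j+1}$---membership in $V_i\subset X$ for every shifted base point $z_{i,S}:=T^{a_iq_S}z$, together with IP$^*$ proximity to $z_\infty$ in $X_\infty$---must be reconciled. Theorem~\ref{key-thm}'s $\pi^s$-saturation is the bridge that converts $X_\infty$-level IP$^*$ recurrence into $X$-level open-set membership, while the filter structure of $\mathcal{F}_{fip}^*$ (Corollary~\ref{filter}) consolidates the many pro-nil constraints into a common $\mathcal{F}_{fip}^*$-set of candidates.
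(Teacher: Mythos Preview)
Your inductive scheme has a genuine gap at the step where you must verify $\pi(z)\in\bigcap_i\pi(\tilde V_i)$. For each fixed $i$ this asks for a \emph{single} point $\tilde w_i\in\pi^{-1}(\pi(z))$ satisfying the $2^j$ conditions $\tilde w_i\in V_i$ and $T^{a_iq_S}\tilde w_i\in V_i$ for every nonempty $S\subset\{1,\dots,j\}$. You know two separate facts: $w_i\in V_i\cap\pi^{-1}(\pi(z))$, and $z\in\bigcap_{S\neq\emptyset}T^{-a_iq_S}V_i$ (from the inductive hypothesis). But these are witnessed by \emph{different} points of the fibre, and nothing you invoke produces a common witness. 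The $\pi^s$-saturation of $L_z\subset X^s$ only says that $\pi^s$-fibres of points already in $L_z$ stay in $L_z$; it does not manufacture a fibre point meeting $2^j$ prescribed open sets. Nor does ``taking $U'$ small'' help: smallness of $U'\subset X_\infty$ gives control of $\sigma$-images, but since the factor map $Y\to X_\infty$ is not assumed almost one-to-one in this lemma, closeness in $X_\infty$ does not propagate to closeness in $Y$, let alone to the existence of the required $\tilde w_i$. Without $\tilde w_i$ your ``re-run the base case'' step has no target, and the argument stalls.

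The paper's proof sidesteps this entirely by a different mechanism: it takes $\Omega=\bigcap_{d\ge 1}\Omega_d$ so that $L_z^d(X)$ is $\pi^d$-saturated for \emph{every} $d$, and then for each length $k$ it uses saturation in the large product $X^{B_kN}$ (with $N=\max_i a_i$ and $B_k=b_1+\cdots+b_k$ for a rapidly growing sequence $b_j$ making all products $im$, $1\le i\le N$, $m\in I_k$, distinct). A single vector $\vec z_k$ encoding all the targets $z_i$ at the coordinates $a_im$ lies in $L_z^{B_kN}(X)$ by saturation, and one application of orbit density yields one integer $n_k$ for which the entire finite IP-set $\{mn_k:m\in I_k\}$ works. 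No induction on the IP-length, no pro-nil recurrence, and no need to locate points like your $\tilde w_i$: the $2^k$ simultaneous constraints are absorbed by passing to a product of dimension growing with $k$, which is exactly the move your fixed-dimension scheme cannot make.
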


\begin{proof}
By Theorem \ref{key-thm},
for every $d\in \mathbb{N}$
there is a dense $G_\delta$ subset $\Omega_d$ of $X$ such that for each $x\in \Omega_d$,
$L_x^d(X)=\overline{\mathcal{O}}((x,\ldots,x),T\times \ldots \times T^d)$
is $\pi\times\ldots\times \pi$-saturated.\footnote{See Definition \ref{def-saturated}.}
Set $\Omega=\bigcap_{d\in \mathbb{N}}\Omega_d$,
then $\Omega$ is a dense $G_\delta$ subset of $X$.

We next show that the set $\Omega$ meets our requirement.

Now fix distinct positive integers $a_1,\ldots,a_s$.
Let $V_0,V_1,\ldots,V_s$ be open subsets of $X$ with
$W:=\bigcap_{i=0}^s \pi(V_i)\neq \emptyset$,
then $\pi^{-1}(W)\cap V_i\neq \emptyset$ for every $i=0,1,\ldots,s$.
Let $z\in \Omega \cap V_0 \cap \pi^{-1}(W)$.
For $i=1,\ldots,s$, let $z_i\in \pi^{-1}(\{  \pi(z) \} )\cap V_i$  and choose $\delta>0$ with $B(z_i,\delta)\subset V_i$.

Set $N=\max\{a_1,\ldots,a_s\}$.
Let $\{b_j\}_{j\in \mathbb{N}}$ be a sequence of positive integers such that $b_{j+1}\geq N(b_1+\ldots+b_j)+1$,
and let $I_j$ be the finite IP-set generated by $\{b_1,\ldots,b_j\}$.
%i.e., $I_j=\big\{b_{1}\epsilon_{1}+ \ldots+ b_j\epsilon_j: \epsilon_1,\ldots,\epsilon_j\in \{0,1\}\big\} \backslash \{0\}$.

\medskip

\noindent {\bf Claim:}
For $i,i'\in\{1,\ldots,N\}$ and $m,m'\in I_j$,
$im=i'm'$ if and only if $i=i'$ and $m=m'$.

\begin{proof}[Proof of Claim]
Suppose for a contradiction that there exist
$i,i'\in\{1,\ldots,N\}$ with $i<i'$ and $m,m'\in I_j$ such that $im=i'm'$.
Let $\epsilon_1,\ldots,\epsilon_j,\epsilon_1',\ldots,\epsilon_j'\in \{0,1\}$
such that $m=b_1\epsilon_1+\ldots+b_j\epsilon_j$ and $m'=b_1\epsilon_1'+\ldots+b_j\epsilon_j'$.
Let
\[
j_0=\max\{ 1\leq n \leq j: \epsilon_n+\epsilon_n'>0\}.
\]

If $j_0=1$, then $m=m'=b_1$,
and thus $im< i'm'$.

If $j_0\geq 2$, then we have
\begin{align*}
b_{j_0}\leq |i\epsilon_{j_0}-i'\epsilon_{j_0}'|b_{j_0} &=\big|i\sum_{n=1}^{j_0-1}b_n\epsilon_n-
i'\sum_{n=1}^{j_0-1}b_n\epsilon_n'\big| \\
& \leq \sum_{n=1}^{j_0-1}b_n|i\epsilon_n-i'\epsilon'_n|\\
& \leq N(b_1+\ldots+b_{j_0-1}),
\end{align*}
which is a contradiction
by the choice of $b_{j_0}$.
This shows the claim.
\end{proof}

For $k\in \mathbb{N}$, let $B_k=b_1+\ldots+b_k$ and
let $\vec{z}_k=(z_1^{k},\ldots,z_{B_kN}^k)\in X^{B_k N} $
such that
\[
z_{j}^k=
\begin{cases}
z_i, & j=a_im,\;\mathrm{where}\; i\in \{1,\ldots,s\},\; m\in I_k;\\
z, &   \hbox{otherwise.}
\end{cases}
\]
By the claim above, every $\vec{z}_k$ is well defined.

Recall that for any $d\in \mathbb{N}$,
$L_z^d(X)$ %=\overline{\mathcal{O}}((z,\ldots,z),T\times \ldots \times T^d)$
is $\pi\times \ldots\times \pi$-saturated,
then $\vec{y}\in L_z^d(X)$ for any $\vec{y}=(y_1,\ldots,y_d)\in X^d$ with
$\pi(z)=\pi(y_i)$ for $i=1,\ldots,d$.
In particular, $\vec{z}_k\in L_z^{B_kN}(X)$
which implies that there is some $n_k\in \mathbb{N}$
such that
$\rho(T^{jn_k}z,z_j^k)<\delta$ for $j=1,\ldots,B_kN$.

Let $A_k=\{m n_k :m\in I_k\}$ and $A=\bigcup_{k\in \mathbb{N}}A_k$.
Then $A\in \mathcal{F}_{fip}$ and
$T^{a_in}z\in B(z_i,\delta)\subset V_i$ for $i=1,\ldots,s$ and $n\in A$.

This completes the proof.
\end{proof}

\section{Proof of Theorem \ref{poly-orbit} assuming Theorem \ref{polynomial-TCF}}\label{pf-thm-A}

In this section, assuming Theorem \ref{polynomial-TCF} we give a proof of Theorem \ref{poly-orbit}.
We start with the following simple observation.

\begin{lemma}\label{total-minimality-proximal}
Let $\pi:(X,T)\to (Y,T)$ be an almost one to one extension of minimal systems.
Then $(X,T)$ is totally minimal if and only if $(Y,T)$ is totally minimal.
\end{lemma}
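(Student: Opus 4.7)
The plan is to handle the two directions separately. The forward direction $(\Rightarrow)$ is immediate: if $(X,T)$ is totally minimal, then for each $k\in\mathbb{N}$ the same map $\pi:(X,T^k)\to(Y,T^k)$ is still a factor map, and the continuous image of a minimal system is minimal, so $(Y,T^k)$ is minimal. The almost one-to-one hypothesis is not needed for this direction.

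For the converse $(\Leftarrow)$, I would argue by contradiction. Suppose $(Y,T)$ is totally minimal while $(X,T^k)$ fails to be minimal for some $k\ge 1$. By Zorn's lemma pick any $T^k$-minimal subset $M\subset X$; by assumption $M\subsetneq X$. Since $T$ commutes with $T^k$, the set $TM$ is again $T^k$-minimal. Distinct minimal subsets of a single action are disjoint, so either $TM=M$ or $TM\cap M=\emptyset$. The case $TM=M$ is excluded, for then $M$ would be a proper closed $T$-invariant nonempty subset of $X$, contradicting minimality of $(X,T)$. Hence $M$ and $TM$ are two disjoint closed $T^k$-invariant subsets of $X$.

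Applying $\pi$ and using that $(Y,T^k)$ is minimal, I conclude $\pi(M)=\pi(TM)=Y$. Let $\Omega\subset X$ be the dense $G_\delta$ subset on which $\pi$ is one-to-one, and pick any $x\in\Omega$; set $y=\pi(x)$. Since $y\in\pi(M)\cap\pi(TM)$, there exist $x_0\in M$ and $x_1\in TM$ with $\pi(x_0)=\pi(x_1)=y$. But $\pi^{-1}(y)=\{x\}$ forces $x_0=x_1=x$, so $x\in M\cap TM=\emptyset$, the desired contradiction.

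I do not anticipate any real obstacle: the whole argument simply packages the almost one-to-one property together with the standard fact that in a minimal $(X,T)$ the $T^k$-minimal subsets form a single $T$-cycle partitioning $X$. The only place to be mildly careful is the step producing \emph{two} disjoint $T^k$-minimal subsets when $(X,T^k)$ is not minimal, which is handled by ruling out $TM=M$ via minimality of $(X,T)$.
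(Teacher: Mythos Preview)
Your proof is correct. The paper does not actually give a proof of this lemma---it is stated as a ``simple observation'' and left to the reader---so there is nothing to compare against; your argument supplies exactly the kind of elementary details one would expect, and the use of a one-to-one point to derive the contradiction from $\pi(M)=\pi(TM)=Y$ is the natural way to exploit the almost one-to-one hypothesis.
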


Now we are in position to show Theorem \ref{poly-orbit} assuming Theorem \ref{polynomial-TCF}.

\begin{proof}[Proof of Theorem \ref{poly-orbit} assuming Theorem \ref{polynomial-TCF}]
Let $(X,T)$ be a totally minimal system
and let $X_\infty=X/\mathbf{RP}^{[\infty]}(X)$.
It follows from Theorem \ref{polynomial-TCF} that there exist minimal systems $X^*$ and $X_\infty^*$
which are almost one to one extensions of $X$ and $X_\infty$ respectively,
and a commuting diagram below:
\[
\xymatrix{
  X \ar[d]_{\pi}  & X^* \ar[d]^{\pi^*}  \ar[l]_{\sigma^*} \\
  X_\infty   & X_\infty^*      \ar[l]_{\tau^*}
  }
\]

By Lemma \ref{total-minimality-proximal}, $(X^*,T)$ and $(X_\infty^*,T)$ are both totally minimal.
It suffices to verify Theorem \ref{poly-orbit} for system $(X^*,T)$.

Let $\{p_1,\ldots,p_d\}$ be an independent family of integer polynomials,
and let $V_0,V_1,\ldots,V_d$ be non-empty open subsets of $X^*$.
As $\pi^*$ is open, $\pi^*(V_0),\pi^*(V_1),\ldots,\pi^*(V_d)$ are non-empty open subsets of $X^*_\infty$.
Notice that $X^*_\infty$ is an almost one to one extension of $X_\infty$
which is a minimal $\infty$-step pro-nilsystem,
thus by Lemma \ref{equi-condition} and Corollary  \ref{uni-distributed-AA},
there is some $m\in \mathbb{N}$ such that
\begin{align*}
   & \pi^*(V_0)\cap\pi^*( T^{-p_1(m)}V_1)\cap\ldots \cap \pi^*( T^{-p_d(m)}V_d) \\
  = & \pi^*(V_0)\cap T^{-p_1(m)}\pi^*(V_1)\cap\ldots \cap T^{-p_d(m)}\pi^*(V_d)\neq \emptyset
\end{align*}

For $i=1,\ldots,d$, let $p_i'(n)=p_i(n+m)-p_i(m)$.
Then every $p_i'$ is an integer polynomial with $p_i'(0)=0$.
Now using Theorem \ref{polynomial-TCF} for integer polynomials $p_1',\ldots,p_d'$
and open sets $V_0, T^{-p_1(m)}V_1,\ldots,T^{-p_d(m)}V_d$,
there exists some $k\in \mathbb{N}$ such that
\begin{align*}
   & V_0\cap T^{-p_1(k+m)}V_1\cap\ldots \cap T^{-p_d(k+m)}V_d \\
  = & V_0\cap T^{-p_1'(k)}(T^{-p_1(m)}V_1)\cap\ldots \cap  T^{-p_d'(k)}( T^{-p_d(m)}V_d)\neq \emptyset,
\end{align*}
which implies Theorem \ref{poly-orbit} for system $(X^*,T)$ by Lemma \ref{equi-condition}.

This completes the proof.
\end{proof}

\section{Proof of Theorem \ref{polynomial-TCF}}\label{pf-thm-B}

In this section, we will prove Theorem \ref{polynomial-TCF}.
%Recall that an integer polynomial
%is the polynomial with rational coefficients
%taking integer values on the integers.
Let $\mathcal{P}^*$ be the set of all non-constant integer polynomials
{\bf taking zero value at zero}.
A \emph{system} $\mathcal{A}$ is a finite subset of $\mathcal{P}^*$.

\subsection{The PET-induction}\label{PET-induction-def}\

Two integer polynomials $p,q$ will be called \emph{equivalent}
if they have the same degree and the leading coefficients
of the polynomials $p,q$ coincide as well.
If $C$ is a set of equivalent integer polynomials,
its \emph{degree} $w(C)$ is the degree of any its members.

For every system $\mathcal{A}$, we define its \emph{weight vector} $\phi(\mathcal{A})$
as follows.
Let $w_1<\ldots<w_k$ be the
set of the distinct degrees of all equivalence classes appeared in $\mathcal{A}$.
For $1\leq i\leq k$, let $\phi(w_i)$ be the number of the equivalence classes of elements of $\mathcal{A}$
with degree $w_i$. Let the weight vector $\phi(\mathcal{A})$ be
\[
\phi(\mathcal{A})=\big((\phi(w_1),w_1),\ldots,(\phi(w_k),w_k)\big).
\]
%If $k=1$, we just write $\phi(\mathcal{A})=(\phi(w_1),w_1)$.

\medskip
For example, the weight vector of $\{c_1n,\ldots,c_sn\}$ is $(s,1)$ if $c_1,\ldots,c_s$
are distinct non-zero integers;
the weight vector of $\{an^2+b_1n,\ldots,an^2+b_tn\}$ ($a$ is a non-zero integer) is $(1,2)$;
and
the weight vector of $\{an^2+b_1n,\ldots,an^2+b_tn,\;c_1n,\ldots,c_sn\}$
($a$ is a non-zero integer and $c_1,\ldots,c_s$
are distinct non-zero integers) is $\big((s,1),(1,2)\big)$;
and the weight vector of the general polynomials of degree not more than 2
is $\big((s,1),(k,2)\big)$.

Let $\mathcal{A},\mathcal{A}'$ be two systems.
We say that $\mathcal{A}'$ \emph{precedes} $\mathcal{A}$
if there exists a degree $w$ such that $\phi(\mathcal{A}')(w)<\phi(\mathcal{A})(w)$ and
$\phi(\mathcal{A})(u)=\phi(\mathcal{A}')(u)$ for any degree $u>w$.
We denote it by $\phi(\mathcal{A}')\prec \phi(\mathcal{A})$.
Under the order of weight vectors, we have
\begin{align*}
&(1,1)\prec (2,1)\prec\ldots\prec (m,1)\prec\ldots \prec(1,2)\prec\big((1,1),(1,2)\big)\prec\ldots \prec\\
& \big((m,1),(1,2)\big)\prec\ldots\prec(2,2)\prec\big((1,1),(2,2)\big)\prec\ldots \prec \big((m,1),(2,2)\big)\prec\ldots \prec\\
&\big((m,1),(k,2)\big)\prec\ldots \prec(1,3)\prec\big((1,1),(1,3)\big)\prec\ldots
\big((m,1),(k,2),(1,3)\big)\prec\ldots \prec\\
& (2,3)\prec\ldots \prec
\big((a_1,1),(a_2,2)\ldots,(a_k,k)\big)\prec\ldots.
\end{align*}

For $p\in \mathcal{P}^*$ and $m\in \mathbb{Z}$,
define $(\partial_m p)(n):=p(n+m)-p(m)$.
It is clear that $\partial_m p \in \mathcal{P}^*$ for any $p\in \mathcal{P}^*$ and $m\in \mathbb{Z}$.

The following lemma can be found in \cite{BL96,Le94}.
\begin{lemma}\label{PET-induction}
Let $\mathcal{A}$ be a system and let $m_1,\ldots,m_d $ be distinct non-zero integers.
Let $p\in \mathcal{A}$ be an element of the minimal degree in $\mathcal{A}$ and let
\[
\mathcal{A}'=\{q-p,\;\partial_{m_j}q-p:\; q\in \mathcal{A},\; 1\leq j\leq d\},
\]
then $\phi(\mathcal{A}')\prec \phi(\mathcal{A})$.
\end{lemma}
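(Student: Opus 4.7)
The plan is to compute $\phi(\mathcal{A}')$ directly by tracking how the leading terms of the elements of $\mathcal{A}'$ arise from those of $\mathcal{A}$. The key preliminary fact I would establish is that for any $q \in \mathcal{P}^*$ of degree $k$ with leading coefficient $a$ and any $m \in \mathbb{Z}$, the polynomial $(\partial_m q)(n) = q(n+m) - q(m)$ also has degree $k$ with leading coefficient $a$: the top-degree term $a n^k$ is preserved while only terms of strictly smaller degree are perturbed. In particular, $\partial_m q$ lies in the same equivalence class as $q$.

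Writing $\deg(p) = w_1$ and letting $a_p$ denote its leading coefficient, I would split the analysis of $\mathcal{A}'$ according to the degree of the parent $q \in \mathcal{A}$. If $\deg(q) > w_1$, then both $q - p$ and $\partial_{m_j} q - p$ retain degree $\deg(q)$ and leading coefficient equal to that of $q$, since subtracting $p$ (or subtracting it from $\partial_{m_j} q$) only affects terms of degree $< w_1 < \deg(q)$. It follows that for every degree $w > w_1$ the equivalence classes appearing in $\mathcal{A}'$ at level $w$ are exactly those appearing in $\mathcal{A}$, so $\phi(\mathcal{A}')(w) = \phi(\mathcal{A})(w)$.

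For the critical level $w_1$, let $q \in \mathcal{A}$ have degree $w_1$ and leading coefficient $a_q$. By the preliminary fact, $\partial_{m_j} q$ also has leading coefficient $a_q$. Hence $q - p$ and $\partial_{m_j} q - p$ either drop to degree $< w_1$ (when $a_q = a_p$) or remain of degree $w_1$ with leading coefficient $a_q - a_p \neq 0$ (when $a_q \neq a_p$). Since distinct values $a_q \neq a_p$ produce distinct values $a_q - a_p$, the equivalence classes at degree $w_1$ in $\mathcal{A}'$ are in bijection with the equivalence classes of $\mathcal{A}$ at degree $w_1$ other than the one containing $p$. This gives $\phi(\mathcal{A}')(w_1) = \phi(\mathcal{A})(w_1) - 1$.

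Combining the two computations yields $\phi(\mathcal{A}')(w) = \phi(\mathcal{A})(w)$ for every $w > w_1$ and $\phi(\mathcal{A}')(w_1) < \phi(\mathcal{A})(w_1)$; no classes can appear in $\mathcal{A}'$ at degrees exceeding $w_k$ since forming differences cannot raise the maximum degree. By the definition of $\prec$, this is precisely $\phi(\mathcal{A}') \prec \phi(\mathcal{A})$. The only mild subtlety I foresee is handling parents $q$ in the same equivalence class as $p$: for such $q$ the contributions $q - p$ and $\partial_{m_j} q - p$ may collapse to the zero polynomial (and so fall outside $\mathcal{P}^*$) or to a polynomial of strictly smaller degree, but neither case affects the weight-vector comparison at or above $w_1$. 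Beyond this bookkeeping I do not anticipate any real obstacle; the argument is a standard PET computation once the effect of $\partial_m$ on leading terms is pinned down.
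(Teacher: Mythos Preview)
Your argument is correct and is precisely the standard PET computation. The paper itself does not prove this lemma at all: it simply cites \cite{BL96,Le94} and moves on. So there is nothing to compare your approach against within the paper; you have supplied the omitted details.

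One cosmetic imprecision: when $\deg(q)>w_1$ you write that subtracting $p$ ``only affects terms of degree $<w_1$''. Since $\deg(p)=w_1$, it affects terms of degree $\le w_1$; but as $w_1<\deg(q)$ this still leaves the leading term of $q$ untouched, so your conclusion is unaffected. Beyond that, your bookkeeping (the bijection $a_q\mapsto a_q-a_p$ on leading coefficients at level $w_1$, and the observation that drops to lower degree or to the zero polynomial are irrelevant for the comparison at levels $\ge w_1$) is exactly what is needed.
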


\subsection{A stronger result}\

Throughout this section,
let $(X,T)$ and $(Y,T)$ be minimal systems, and
let
\[
X\stackrel{\pi}{\longrightarrow} Y\stackrel{\phi}{\longrightarrow} X/\mathbf{RP}^{[\infty]}(X)=:X_\infty
\]
 be factor maps such that
$\pi$ is \textbf{open} and $\phi$ is \textbf{almost one to one}.

For systems $\mathcal{A}=\{p_1,\ldots,p_s\}$ and $\mathcal{C}$,
we just say that $\pi$ has the property $\Lambda(\mathcal{A},\mathcal{C})$ for convenience,
if for any open subsets $V_0,V_1,\ldots,V_s$ of $X$ with
$\bigcap_{i=0}^s\pi(V_i)\neq \emptyset$,
there exist $z\in V_0$ and $n\in \mathbb{N}$
such that
\begin{enumerate}[itemsep=4pt,parsep=2pt,label=(\arabic*)]
\item $T^{p_i(n)}z\in V_i$ for $1\leq i\leq s$;
\item\label{AAAA1111} $T^{q(n)}\pi(z)\in \bigcap_{i=0}^s \pi(V_i)$ for $q\in \mathcal{C}$.
\end{enumerate}

It follows from Theorem \ref{key-thm0}
that to show Theorem \ref{polynomial-TCF},
it suffices to show the following stronger result:

\begin{theorem}\label{polynomial-case}
For any systems $\mathcal{A}$ and $\mathcal{C}$,
$\pi$ has the property $\Lambda(\mathcal{A},\mathcal{C})$.
\end{theorem}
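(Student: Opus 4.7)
I prove Theorem \ref{polynomial-case} by induction on the PET weight vector $\phi(\mathcal{A})$ under the ordering $\prec$, treating $\mathcal{C}$ as a free parameter so that the induction hypothesis is available for any finite $\mathcal{C}' \subset \mathcal{P}^*$. The base case handles a single linear polynomial via Lemma \ref{fip-infi-fiber} and Corollary \ref{return-time-AA}; the inductive step invokes Lemma \ref{PET-induction} to pass from $\mathcal{A}$ to a system $\mathcal{A}'$ of strictly smaller weight, combined with a shift-by-$p$ substitution in $X$.

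\textbf{Base case.} Let $\mathcal{A} = \{c n\}$ for $c \in \mathbb{Z} \setminus \{0\}$ and let $\mathcal{C} \subset \mathcal{P}^*$ be finite. Given open $V_0, V_1 \subset X$ with $W = \pi(V_0) \cap \pi(V_1) \neq \emptyset$, Lemma \ref{fip-infi-fiber} with $a_1 = |c|$ (replacing $T$ by $T^{-1}$ when $c < 0$, which preserves the hypotheses since $\mathbf{RP}^{[\infty]}$ is invariant under time reversal) yields a dense $G_\delta$ set $\Omega_1 \subset X$ such that for each $z \in V_0 \cap \Omega_1$ with $\pi(z) \in W$, the set $N_{cn}(z, V_1)$ contains an IP-set and hence lies in $\mathcal{F}_{fip}$. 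Corollary \ref{return-time-AA} applied on $Y$ (an almost one-to-one extension of the $\infty$-step pro-nilsystem $X_\infty$) provides a dense $G_\delta$ set $\Omega_2 \subset Y$ with the property that for every $y \in \Omega_2$ and every open $U \ni y$, $\bigcap_{q \in \mathcal{C}} N_q(y, U) \in \mathcal{F}_{fip}^*$. Openness of $\pi$ together with the Baire theorem make $V_0 \cap \Omega_1 \cap \pi^{-1}(\Omega_2 \cap W)$ non-empty; pick $z$ inside, a small open $U \subset W$ about $\pi(z)$, and any $n$ in $N_{cn}(z, V_1) \cap \bigcap_{q \in \mathcal{C}} N_q(\pi(z), U)$, which is non-empty by the duality $\mathcal{F}_{fip}^* = (\mathcal{F}_{fip})^*$.

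\textbf{Inductive step.} Assume $\phi(\mathcal{A})$ is not minimal and that $\Lambda(\mathcal{A}', \mathcal{C}')$ holds for all $(\mathcal{A}', \mathcal{C}')$ with $\phi(\mathcal{A}') \prec \phi(\mathcal{A})$. Fix $p \in \mathcal{A}$ of minimal degree and open $V_0, \ldots, V_s \subset X$ with $W = \bigcap_{i=0}^s \pi(V_i) \neq \emptyset$. Pick $y_0 \in W$ in the dense $G_\delta$ supplied by Corollary \ref{return-time-AA} for the polynomial family $\mathcal{A} \cup \mathcal{C}$, and an open $W_0 \subset W$ about $y_0$. Because $\mathcal{F}_{fip}^*$ is a filter (Corollary \ref{filter}), one can inductively extract distinct $m_1, \ldots, m_d$ so that every finite IP-combination of the $m_k$ belongs to $\bigcap_{j=1}^s N_{p_j}(y_0, W_0) \cap \bigcap_{q \in \mathcal{C}} N_q(y_0, W_0)$. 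By Lemma \ref{PET-induction}, the derived system
\[
\mathcal{A}' = \{q - p : q \in \mathcal{A},\, q \ne p\} \cup \{\partial_{m_k} q - p : q \in \mathcal{A},\, 1 \le k \le d\},
\]
with constants discarded, satisfies $\phi(\mathcal{A}') \prec \phi(\mathcal{A})$. Apply the induction hypothesis to $\mathcal{A}'$ with enlarged family $\mathcal{C}' = \mathcal{C} \cup \{p\} \cup \{\partial_{m_k} p : 1 \le k \le d\}$ (still a finite subset of $\mathcal{P}^*$, since each member is non-constant and vanishes at $0$) and with open sets constructed from the $V_j$ and their $T^{-p(m_k)}$-translates; upon the substitution $z = T^{-p(n)} z''$, the inductive conclusions translate back to $T^{p_j(n)} z \in V_j$ for $j = 1, \ldots, s$ and $T^{q(n)} \pi(z) \in W$ for $q \in \mathcal{C}$.

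\textbf{Main obstacle.} The principal technical hurdle is encoding the condition $z \in V_0$ into the induction: under $z = T^{-p(n)} z''$ it becomes the $n$-dependent constraint $z'' \in T^{p(n)} V_0$, which cannot be specified to the induction upfront. The natural resolution is to carry through the induction a stronger conclusion, such as an $\mathcal{F}_{fip}$-recurrence set of admissible $n$'s rather than a single $n$, so that the $n$-dependence can be absorbed by intersection with the $\mathcal{F}_{fip}^*$-recurrence set of $y_0 \in Y$ and then using openness of $\pi$ together with careful refinement of the open sets to lift the resulting point back into $V_0$. Keeping $\mathcal{C}'$ growable at each PET reduction is crucial, since each descent introduces the lower-order polynomials $\partial_{m_k} p$ which must be tracked at the $Y$-level; the fact that all added polynomials stay in $\mathcal{P}^*$ is what makes the whole induction self-contained.
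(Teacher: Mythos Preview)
Your proposal is not a complete proof: the ``Main obstacle'' paragraph is an admission that the inductive step does not close. The substitution $z = T^{-p(n)}z''$ indeed turns the requirement $z \in V_0$ into the $n$-dependent condition $z'' \in T^{p(n)}V_0$, and your suggested fix---upgrading the inductive conclusion to produce an $\mathcal{F}_{fip}$-set of admissible $n$'s---is neither carried out nor clearly workable. The induction hypothesis $\Lambda(\mathcal{A}',\mathcal{C}')$ yields only \emph{one} pair $(z'',n)$, and the point $z''$ you obtain may vary with the open data, so you cannot simply intersect a set of $n$'s with an $\mathcal{F}_{fip}^*$-set on $Y$ and expect a single $z$ to witness everything. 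Moreover, your choice of the shifts $m_1,\ldots,m_d$ from recurrence on $Y$ is disconnected from what is needed in $X$: the $m_k$ enter only through $\partial_{m_k}q$ in $\mathcal{A}'$ and $\mathcal{C}'$, but you never explain which open sets in $X$ are fed to the induction or how the output lands in the original $V_j$.

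The paper resolves this obstacle by a genuinely different mechanism. Rather than trying to encode $z\in V_0$ directly, it covers a single fiber $\pi^{-1}(\{\pi(x)\})$ by finitely many translates $T^{a_1}V_0,\ldots,T^{a_d}V_0$ (using openness of $\pi$), and then builds a sequence $x_1,\ldots,x_d$ and increments $n_1,\ldots,n_d$ \emph{one step at a time}, each step invoking the PET hypothesis on a system of strictly smaller weight. At step $l$ one records which translate $T^{a_{k_{l+1}}}V_0$ the new point $x_l$ falls into; after $d$ steps pigeonhole forces a repetition $k_j=k_{l+1}$, and then $z=T^{-a_{k_j}}x_l$ with $n=n_j+\cdots+n_l$ works. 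A second subtlety you do not address is that linear polynomials satisfy $\partial_m q = q$, so they cannot be used to track distinct open sets across steps; the paper handles mixed linear/nonlinear systems by two auxiliary claims (Claims~\ref{gene-claim1} and~\ref{gene-claim2}) that separate the roles of the linear part $\mathcal{B}$ and the higher-degree part $\mathcal{A}$. Your sketch contains none of this architecture.
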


\subsubsection{Ideas for the proof of Theorem \ref{polynomial-case}}\

To prove Theorem \ref{polynomial-case}, we will use induction on the weight vector of $\mathcal{A}$.
The first step we do is to show that
$\pi$ has the property $\Lambda(\mathcal{A},\mathcal{C})$
if the weight vector of $\mathcal{A}$ is $(s,1)$, i.e., $\mathcal{A}=\{c_1n,\ldots,c_sn\}$,
where $c_1,\ldots,c_s$ are distinct non-zero integers.
In the second
step we assume that $\pi$ has the property $\Lambda(\mathcal{A}',\mathcal{C}')$
for any system $\mathcal{C}'$ and the system $\mathcal{A}'$ whose weight vector is
$\prec \big( (\phi(w_1),w_1),\ldots,(\phi(w_k),w_k)\big)$.
Then we show that
$\pi$ also has the property $\Lambda(\mathcal{A},\mathcal{C})$
for any system $\mathcal{C}$ and
%the result also holds for
the system $\mathcal{A}$
with weight vector $\big( (\phi(w_1),w_1),\ldots,(\phi(w_k),w_k)\big)$,
and hence the proof is completed.

Before giving the proof of the second step,
we show Theorem \ref{polynomial-case} holds for system $\mathcal{A}$ with weight vectors $(1,2)$ and $\big((s,1),(1,2)\big)$
as examples
to illustrate our basic ideas.

\subsubsection{The concrete construction}\

We use a simple example to describe how we prove Theorem \ref{polynomial-case}.

Let $(X,T)$ be a minimal system, and assume $\pi:X\to X/\mathbf{RP}^{[\infty]}(X)= X_{\infty}$
is open. For open subsets $U,V$ of $X$
with $\pi(U)\cap \pi(V)\neq\emptyset$, we aim to choose some $n\in \mathbb{Z}$ with
\begin{equation}\label{simple-example}
U\cap T^{-n^2}V\neq \emptyset.
\end{equation}

\noindent {\bf Construction.}
The classical idea (under some assumption) to show (\ref{simple-example}) is the following:\

\noindent {(i):} Cover $X$ by the orbits of $U,V$. That is, choose $d\in \mathbb{N}$
with $\bigcup_{i=1}^d T^iU=X=\bigcup_{i=1}^d T^iV$;\

\noindent {(ii):} Cnstruct $x_1,\ldots,x_d\in X$ and $n_1,\ldots,n_d\in \mathbb{N}$
such that
\[
T^{(n_i+\ldots+n_j)^2}x_j\in T^iV,\quad \forall\; 1\leq i\leq j\leq d.
\]
Once we have achieved this, then $x_d\in T^lU$ for some $ l\in\{1,\ldots, d\}$
which implies
\[
U\cap T^{-(n_l+\ldots+n_d)^2}V\neq \emptyset.
\]

\medskip

For general minimal systems,
this construction needs some modifications.
Now consider the return time set
\begin{equation}\label{return time set-a}
N_k(W_1,W_2):=\{n\in \mathbb{Z}:W_1\cap T^{-kn}W_2\neq \emptyset\}.
\end{equation}
It follows from Theorem \ref{key-thm} that $N_k(W_1,W_2)$ is non-empty for any
non-zero integer $k$ and any open subsets $W_1,W_2$ of $X$ with $\pi(W_1)\cap \pi(W_2)\neq \emptyset$.
%Note that this conclusion may not hold for general minimal systems without any assumption.
Thus to ensure the existence of the construction (i), %for general minimal systems,
the first change we make is the following:

\medskip

\noindent {(i)*:} Cover some fixed fiber instead of the whole space.
That is,
choose some $x\in X$ with $\pi(x)\in \pi(U)\cap \pi(V)$ and choose $a_1,\ldots,a_d\in \mathbb{N}$ such that
\[
\pi^{-1}( \{\pi(x)\})\subset \big(\bigcup_{j=1}^dT^{a_j}U\big)
\cap
\big(\bigcup_{j=1}^dT^{a_j}V\big).
\]

Let us go into the detail  of the first two steps in construction (ii).
Choose $x_1\in X$ and $n_1\in \mathbb{N}$ with $ T^{n_1^2}x_1 \in T^{a_1}V$.
Note that by Bergelson-Leibman Theorem \cite{BL96} such choice exists.
What additional condition the point $x_1$ need to satisfy remains to be determined.

For the choice of $x_2$,
a feasible method is to track
$x_1$ along $\partial_{n_1}n^2=n^2+2n_1n$.
To be more precise, it suffices to choose $x_2\in X$ and $n_2\in \mathbb{N}$
such that $T^{n_2^2}x_2\in T^{a_2}V$ and $T^{n_2^2+2n_1n_2}x_2\in V_{x_1}$,
where $V_{x_1}$ is an open neighbourhood of $x_1$ with $T^{n_1^2}V_{x_1}\subset T^{a_1}V$.
This implies that the return time set $\{n\in \mathbb{Z}:T^{a_2}V\cap T^{-2n_1n}V_{x_1}\neq \emptyset\}$
should be non-empty.
By the argument above about the set (\ref{return time set-a}),
a suitable condition is $\pi(T^{a_2}V)\cap \pi(V_{x_1})\neq \emptyset$.
Thus to guarantee the induction procedure,
the points we choose in construction (ii) need to be very close to the fixed fiber,
i.e., $T^{n_1^2}\pi(x_1),T^{n_2^2}\pi(x_2)\in \pi(U)\cap \pi(V)$:

\bigskip

\[
\begin{tikzpicture}
\draw [dashed,-] (-3,4)--(-3,-4);
\draw [dashed,-] (2.6,4)--(2.6,-4);
\draw [dashed,-] (3,4)--(3,-4);
\draw  [dashed,-]  (-6,-2)--(6,-2);
\node [right] at (-3,-3) {$\pi(x)$};
\node [left] at (-4.5,3) {$T^{a_1}V$};
\node [left] at (-4.5,2.2) {$T^{a_2}V$};
\node [left] at (-4.7,-1.5) {$T^{a_d}V$};
\node [left] at (1.5,3) {$T^{a_1}V$};
\node [left] at (1.5,2.2) {$T^{a_2}V$};
\node [left] at (1.3,-1.5) {$T^{a_d}V$};
\node [right] at (3,-3) {$\pi(x)$};
\node [below] at (-3.4,-0.5){$x_1$};
\node [below] at (2.6,-0.5){$x_1$};
\node [below] at (2,0){$x_2$};
\node [left] at (3,3) {$x$};
\node [left] at (-3,3) {$x$};
\fill
(-3,-3)circle (2pt)
(3,-3)circle (2pt)
(3,3)circle (2pt)
(-3.4,-0.5)circle (2pt)
(2.6,-0.5)circle (2pt)
(2.6,2)circle (2pt)
(-3,3)circle (2pt)
(2,0)circle (2pt);
\draw[thick,red,->] (-3.4,-0.5) to[out=10, in=360] node[right, midway] {$n_1^2$} (-2.6,3);
\draw[thick,red,->] (2.6,-0.5) to[out=10, in=360] node[right, midway] {$n_1^2$} (3.4,3);
\draw[thick,blue,->] (2.6,2) to[out=-20, in=40] node[left, midway] {$2n_1n_2$} (2.6,-0.37);
\draw[thick,red,->] (2,0) to[out=170, in=170] node[left, midway] {$n_2^2$} (2.6,2);
\draw[ thick] (-3, -3) ellipse (1.5 and 0.7);
\draw[thick] (3, -3) ellipse (1.5 and 0.7);
\draw[ thick] (3, 3) ellipse (1.5 and 0.5);
\draw[ thick] (3, 2.2) ellipse (1.6 and 0.4);
\draw[thick] (3, -1.5) ellipse (1.7 and 0.2);
\draw[ thick] (-3, 3) ellipse (1.5 and 0.5);
\draw[ thick] (-3, 2.2) ellipse (1.6 and 0.4);
\draw[ thick] (-3, -1.5) ellipse (1.7 and 0.2);
\node [right,font=\Huge] at (5.5,-2.8) {$X_\infty$};
\node [right,font=\Huge] at (5.5,2.5) {$X$};
\end{tikzpicture}
\]

\medskip

Furthermore,
 we will reduce the system of any given complexity to the lower one,
%for the completeness of the induction procedure,
thus the points chosen in construction (ii) also need to be close to the fixed fiber
along polynomials of any higher degrees,
that is why we need property \ref{AAAA1111} in Theorem \ref{polynomial-case} additionally.

Summarize it as follows:

\medskip

\noindent {(ii)*:}
For any system $\mathcal{C}$, construct $x_1,\ldots,x_d\in X$ and $n_1,\ldots,n_d\in \mathbb{N}$
such that
\begin{enumerate}
[itemsep=4pt,parsep=2pt,label=(\arabic*)]
\item $T^{(n_i+\ldots+n_j)^2}x_j\in T^iV$ for $1\leq i\leq j\leq d$;
\item$T^{q(n)}\pi(x_j)\in \pi(U)\cap \pi(V)$ for $1\leq j\leq d$ and $q\in \mathcal{C}$.% where $\mathcal{C}$ is a system.
\end{enumerate}

\medskip

Practically, we will use construction (i)* and (ii)* for general cases to prove Theorem \ref{polynomial-case}.
When doing this, we find that if in the collection of polynomials there are linear
elements and other non-linear elements, the argument will be very involved.
We will explain in Subsection \ref{exampless} how to overcome this difficulty via proving Case \ref{case2}.

\subsection{The first step: $\mathcal{A}=\{c_1n,\ldots,c_sn\}$}

\begin{lemma}\label{linear-case-with-constraint}
If there are distinct non-zero integers $c_1,\ldots,c_s$ such that $\mathcal{A}=\{c_1n,\ldots,c_sn\}$,
then $\pi$ has the property $\Lambda(\mathcal{A},\mathcal{C})$ for
any system $\mathcal{C}$.
\end{lemma}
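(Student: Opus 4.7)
The plan is to combine two recurrence facts via the Ramsey duality between $\mathcal{F}_{fip}$ and $\mathcal{F}_{fip}^{*}$: a linear IP-recurrence on the open extension $\pi:X\to Y$ coming from Lemma~\ref{fip-infi-fiber} will produce a set of ``good'' times $A\in\mathcal{F}_{fip}$ controlling the fibrewise linear orbits, while a polynomial IP$^{*}$-recurrence on the almost one to one extension $\phi:Y\to X_\infty$ coming from Corollary~\ref{return-time-AA} will produce a set $B\in\mathcal{F}_{fip}^{*}$ controlling the shifts by $q(n)$, $q\in\mathcal{C}$, on the pro-nilfactor. Since $B$ is, by definition of $\mathcal{F}_{fip}^{*}$, a transversal to $\mathcal{F}_{fip}$, we have $A\cap B\neq\emptyset$, and any $n\in A\cap B$ will witness property $\Lambda(\mathcal{A},\mathcal{C})$.

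In more detail, fix open sets $V_0,V_1,\ldots,V_s\subseteq X$ with $W:=\bigcap_{i=0}^{s}\pi(V_i)\neq\emptyset$. Since $\pi$ is open and $X_\infty$ is a factor of $Y$ through $\phi$, Lemma~\ref{fip-infi-fiber}, applied to the distinct non-zero integers $c_1,\ldots,c_s$, yields a dense $G_\delta$ set $\Omega_1\subseteq X$ such that whenever $z\in V_0\cap\Omega_1$ and $\pi(z)\in W$, there is $A\in\mathcal{F}_{fip}$ with $T^{c_i n}z\in V_i$ for all $1\le i\le s$ and every $n\in A$. Corollary~\ref{return-time-AA}, applied to $\phi:Y\to X_\infty$ and to the finitely many polynomials of $\mathcal{C}\subseteq\mathcal{P}^{*}$, produces a dense $G_\delta$ set $\Omega_2\subseteq Y$ such that $\bigcap_{q\in\mathcal{C}}N_q(y,V)\in\mathcal{F}_{fip}^{*}$ for every $y\in\Omega_2$ and every open neighbourhood $V$ of $y$. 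Because $\pi$ is open and onto, $\pi^{-1}(\Omega_2)$ is a dense $G_\delta$ subset of $X$, while $V_0\cap\pi^{-1}(W)$ is a non-empty open subset (take any $y\in W\subseteq\pi(V_0)$ and a $\pi$-preimage in $V_0$). Baire's theorem then supplies a point
\[
z\in V_0\cap\pi^{-1}(W)\cap\Omega_1\cap\pi^{-1}(\Omega_2).
\]
Choose an open neighbourhood $V\subseteq W$ of $\pi(z)$, set $B:=\bigcap_{q\in\mathcal{C}}N_q(\pi(z),V)\in\mathcal{F}_{fip}^{*}$, and pick $n\in A\cap B\subseteq\mathbb{N}$. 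Then $T^{c_i n}z\in V_i$ for $1\le i\le s$ and $T^{q(n)}\pi(z)\in V\subseteq W=\bigcap_{i=0}^{s}\pi(V_i)$ for every $q\in\mathcal{C}$, which is exactly $\Lambda(\mathcal{A},\mathcal{C})$.

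The one mildly delicate point is that Lemma~\ref{fip-infi-fiber} is phrased for distinct \emph{positive} integers, whereas $c_1,\ldots,c_s$ are only assumed distinct and non-zero. However, the combinatorial claim inside its proof goes through verbatim for distinct non-zero integers: if $c_i m=c_{i'}m'$ with $m,m'$ in a sufficiently lacunary finite IP-set, then $c_i$ and $c_{i'}$ must have the same sign (products of opposite signs clearly differ), and the original inequality argument applied to $|c_i|,|c_{i'}|\le N:=\max_i|c_i|$ forces $i=i'$ and $m=m'$. The orbit closure used in the proof simply needs to be taken inside $X^{2NB_k}$ with respect to the product action incorporating both $T,\ldots,T^{NB_k}$ and $T^{-1},\ldots,T^{-NB_k}$, whose $\pi$-saturation on a residual set follows from applying Theorem~\ref{key-thm} to the positive and the inverted halves separately. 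I expect this sign bookkeeping to be the only real technicality; the conceptual core of the argument is just the IP/IP$^{*}$ duality applied to the two different scales of recurrence (linear on $X$, higher-degree on the pro-nilfactor).
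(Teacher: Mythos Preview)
Your core argument---intersecting a set $A\in\mathcal{F}_{fip}$ from Lemma~\ref{fip-infi-fiber} with a set $B\in\mathcal{F}_{fip}^{*}$ from Corollary~\ref{return-time-AA}---is exactly the paper's Case~1 (all $c_i>0$), and is correct as written there.

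The gap is in your treatment of mixed signs. You propose to extend Lemma~\ref{fip-infi-fiber} by taking the orbit closure of $(z,\ldots,z)$ under the product action $T^{-NB_k}\times\cdots\times T^{-1}\times T\times\cdots\times T^{NB_k}$ and claiming its $\pi$-saturation ``follows from applying Theorem~\ref{key-thm} to the positive and the inverted halves separately.'' This does not work: Theorem~\ref{key-thm} only asserts saturation of $L_z^d=\overline{\mathcal{O}}\big((z,\ldots,z),T\times T^2\times\cdots\times T^d\big)$, and applying it to $(X,T^{-1})$ yields the \emph{same} orbit closure (an orbit under a homeomorphism equals the orbit under its inverse), not the joint orbit under a product mixing positive and negative powers. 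Knowing that each half is saturated separately gives no control over the joint closure, since the $n_k$ witnessing an approximation on the positive side need not do anything prescribed on the negative side. Concretely, showing that $\overline{\{(T^{-n}z,T^{n}z):n\in\mathbb{Z}\}}$ contains $\big(\pi^{-1}(\pi(z))\big)^2$ does not follow from saturation of any $L_z^d$ in an obvious way.

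The paper avoids this entirely by a one-line algebraic reduction (its Case~2): if $c_m=\min_i c_i<0$, replace $\mathcal{A}$ by $\mathcal{A}'=\{(c_i-c_m)n:\,i\neq m\}\cup\{-c_m n\}$ (all coefficients now positive) and $\mathcal{C}$ by $\mathcal{C}'=\{q(n)-c_m n:q\in\mathcal{C}\}$, apply Case~1 with the open sets reordered as $V_m,V_0,\ldots,V_{m-1},V_{m+1},\ldots,V_s$, and then set $z=T^{-c_m n}w$. This shift trick is both simpler and rigorous, and you should use it in place of your proposed direct extension.
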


\begin{proof}
Fix distinct non-zero integers $c_1,\ldots,c_s$.
Let $V_0,V_1,\ldots,V_s$ be open subsets of $X$ with
$W:=\bigcap_{i=0}^s\pi(V_i)\neq \emptyset$.
Recall that the map $\pi$ is open, thus $W$ is an open subset of $Y$
and $\pi^{-1}( W)\cap V_i\neq \emptyset$
for every $0\leq i\leq s$.

\medskip

\noindent {\bf Case 1:}  $\min\{ c_1,\ldots,c_s\}>0$.

Fix a system $\mathcal{C}$.
By Corollary \ref{return-time-AA},
there is a dense $G_\delta$ subset $\Omega_Y$ of $Y$
such that for any $y\in \Omega_Y$,
$\bigcap_{q\in \mathcal{C}}N_q(y,V)\in \mathcal{F}_{fip}^*$
for every open neighbourhood $V$ of $y$.

By Lemma \ref{fip-infi-fiber}, there exists
a dense $G_\delta$ subset $\Omega_X$ of $X$ such that for any $x \in\Omega_X \cap V_0 \cap \pi^{-1}(W)$,
there exists $A\in \mathcal{F}_{fip}$
such that $T^{c_in}x\in V_i$ for $1\leq i\leq s$ and $n\in A$.

Let $z\in \pi^{-1}(\Omega_Y)\cap \Omega_X  \cap V_0 \cap \pi^{-1}(W)$.
%Without loss of generality, assume that $z\in \pi^{-1}(\Omega_Y)$.
Then we have
\[
\{n\in \mathbb{Z}: \pi(z)\in \bigcap_{q\in \mathcal{C}}T^{-q(n)}W\}=
\bigcap_{q\in \mathcal{C}}N_q(\pi(z),W)\in \mathcal{F}_{fip}^*,
\]
and we can choose some $n\in \mathbb{Z}$ such that% $T^{c_in}z\in V_i$ for $1\leq i\leq s$
%and $T^{q(n)}\pi(z)\in W=\bigcap_{i=0}^s\pi(V_i)$ for $q\in \mathcal{C}$.
\begin{enumerate}[itemsep=4pt,parsep=2pt,label=(\arabic*)]
\item $T^{c_in}z\in V_i$ for $1\leq i\leq s$;
\item\label{AAAA1111} $T^{q(n)}\pi(z)\in W=\bigcap_{i=0}^s \pi(V_i)$ for $q\in \mathcal{C}$.
\end{enumerate}

Thus $\pi$ has the property $\Lambda(\mathcal{A},\mathcal{C})$.

\medskip

\noindent {\bf Case 2:} $\min\{ c_1,\ldots,c_s\}<0$.

Fix a system $\mathcal{C}$.
Assume $c_m=\min\{ c_1,\ldots,c_s\}<0$ for some $m\in \{1,\ldots,s\}$.
Let
\begin{align*}
\mathcal{A}' &=\{-c_mn,\; (c_1-c_m)n,\ldots,(c_{m-1}-c_m)n,\; (c_{m+1}-c_m)n,\ldots,(c_s-c_m)n\}, \\
\mathcal{C}'& =\{q(n)-c_mn:q\in \mathcal{C}\}.
\end{align*}

By Case 1, $\pi $ has the property $\Lambda(\mathcal{A}',\mathcal{C}')$.
Then for open sets $V_m,V_0,\ldots,V_{m-1},V_{m+1},\ldots,V_s$,
there exist $w\in V_m$ and $n\in \mathbb{Z}$ such that
$T^{-c_mn}w\in V_0,T^{(c_i-c_m)n}w\in V_i$ for $i\in \{1,\ldots,s\}\backslash\{m\}$
and $T^{q(n)-c_mn}\pi(w)\in \bigcap_{i=0}^s\pi(V_i)$ for $q\in \mathcal{C}$.
By letting $z=T^{-c_mn}w$, we deduce that $\pi$ has the property $\Lambda(\mathcal{A},\mathcal{C})$.

This completes the proof.
\end{proof}

\subsection{\bf Examples}\label{exampless}\

In this subsection,
we show Theorem \ref{polynomial-case} holds for system $\mathcal{A}$ with weight vectors $(1,2)$
and $\big((s,1),(1,2)\big)$.

\begin{case}\label{case1}
$\phi(\mathcal{A})=(1,2)$.
\end{case}

\begin{proof}
Let $\mathcal{A}=\{an^2+b_1n,\ldots,an^2+b_tn  \}$, where $a$ is a non-zero integer
and $b_1,\ldots,b_t$ are distinct integers.
Let $V_0,V_1,\ldots,V_m$ be open subsets of $X$ with
$W:=  \bigcap_{m=0}^t\pi(V_m)\neq \emptyset.$
Recall that the map $\pi$ is open, thus $W$ is an open subset of $Y$.
For $0\leq m\leq t$,
by replacing $V_m$ by $V_m\cap  \pi^{-1}(W)$
respectively,
we may assume without loss of generality that $\pi(V_m)=W$.

As $(X,T)$ is minimal, there is some $N\in \mathbb{N}$ such that
$\bigcup_{j=1}^NT^j V_m=X$ for every $0\leq m\leq t$.
Let $x\in X$ with $\pi(x)\in W$ and let
\[
\{1\leq j\leq N:\pi(x)\in T^jW\}
=\{a_1,\ldots,a_d\}.
\]
Then we have
\[
\pi^{-1}( \{\pi(x)\})\subset \bigcap_{m=0}^t\big(\bigcup_{j=1}^dT^{a_j}V_m\big) .
\]
As the map $\pi$ is open,
by Theorem \ref{open-map} we can choose $\delta>0$ such that
\begin{equation}\label{relation-1}
\pi^{-1}\big(B(\pi(x),\delta)\big)\subset \bigcap_{m=0}^t \big(\bigcup_{j=1}^dT^{a_j}V_m\big),
\end{equation}
and
\begin{equation}\label{relation-2}
B(\pi(x),\delta)\subset W \cap \big(\bigcap_{j=1}^d  T^{a_j}W \big).
\end{equation}

Fix a system $\mathcal{C}$.
Write $p_m(n)=an^2+b_mn$ for $1\leq m\leq t$ and let $\eta=\delta/d$.
Inductively we will construct $x_1,\ldots,x_d\in X$
and $n_1,\ldots,n_d\in \mathbb{N}$
such that for $1\leq j\leq l \leq d$,
\begin{itemize}[itemsep=4pt,parsep=2pt]
\item $\pi(x_l)\in  B(\pi(x),l\eta)$;
\item $T^{p_m(n_j+\ldots+n_l)}x_l\in T^{a_j}V_m$ for $1\leq m\leq t$;
\item $T^{q(n_j+\ldots+n_{l})}\pi(x_l)\in  B(\pi(x),l\eta)$ for $q\in \mathcal{C}$.
\end{itemize}

Assume this has been achieved, then for $1\leq j \leq d$,
\begin{itemize}[itemsep=4pt,parsep=2pt]
\item $\pi(x_d)\in B(\pi(x),d\eta)=B(\pi(x),\delta)$;
\item$T^{p_m(n_j+\ldots+n_d)}x_d\in T^{a_j}V_m$ for $1\leq m\leq t$;
\item $T^{q(n_j+\ldots+n_d)}\pi(x_d)\in  B(\pi(x),d\eta)=
B(\pi(x),\delta)\subset \bigcap_{j=1}^d  T^{a_j}W $ for $q\in \mathcal{C}$ by (\ref{relation-2}).
\end{itemize}

As $\pi(x_d)\in B(\pi(x),\delta)$,
it follows from (\ref{relation-1}) that $x_d\in T^{a_j}V_0$ for some $ j\in\{1,\ldots, d\}$.
Put $z=T^{-a_j}x_d$ and $n=n_j+\ldots+n_d$, then we have
\begin{itemize}[itemsep=4pt,parsep=2pt]
\item $z=T^{-a_j}x_d\in V_0$;
\item $T^{p_m(n)}z=T^{-a_j}(T^{p_m(n)}x_d)\in V_m$ for $1\leq m\leq t$;
\item $T^{q(n)}\pi(z)=T^{-a_j}(T^{q(n)}\pi(x_d))\in W =  \bigcap_{m=0}^t\pi(V_m)$ for $q\in \mathcal{C}$.
\end{itemize}

This shows that $\pi$ has the property $\Lambda(\mathcal{A},\mathcal{C})$.

\medskip

{\bf We now return to the inductive construction of $x_1,\ldots,x_d$ and $n_1,\ldots,n_d$.}

\medskip

\noindent {\bf Step 1:}
Let $I_1=\pi^{-1}\big(B(\pi(x),\eta)\big)$. Then $I_1$ is an open subset of $X$ and
\[
\pi(x)\in \underbrace{\bigcap_{m=1}^t\pi( I_1\cap T^{a_1} V_m) }_{=:S_1}\subset  B(\pi(x),\eta).
\]

\noindent {(i)} When $t=1$, i.e., $\mathcal{A}=\{p_1\}$.
By Bergelson-Leibman Theorem \cite{BL96}, there exsit $x_1\in I_1\cap T^{a_1}V_1$
and $n_1\in \mathbb{N}$ such that for $q\in \mathcal{C}$,
\[
T^{p_1(n_1)}x_1,\; T^{q(n_1)}x_1\in I_1\cap T^{a_1}V_1.
\]
Then for $q\in \mathcal{C}$ we have
\[
\pi(x_{1}),\; T^{q(n_{1})}\pi(x_{1})\in  B(\pi(x),\eta).
\]
 \noindent {(ii)} When $t\geq 2$.
Let
\begin{align*}
\mathcal{A}_1 &=\{ p_m-p_1:2\leq m\leq t\}=\{   (b_m-b_1) n:2\leq m \leq t\}, \\
\mathcal{C}_1 & =\{-p_1,\; q-p_1:q\in \mathcal{C}\}.
\end{align*}

By Lemma \ref{linear-case-with-constraint}, $\pi$ has the property $\Lambda(\mathcal{A}_1,\mathcal{C}_1)$.
Then for open sets $I_1\cap T^{a_1}V_1$ and $I_1\cap T^{a_1}V_2,\ldots,I_1\cap T^{a_1}V_t$,
there exist $y_1\in I_1\cap T^{a_1}V_1$ and $n_1\in \mathbb{N}$
such that
\begin{enumerate}[itemsep=4pt,parsep=2pt,label=(\arabic*)]
\item[$(1a)$] $T^{p_m(n_1)-p_1(n_1)}y_1\in I_1\cap T^{a_1} V_m$ for $2\leq m\leq t$;
\item[$(1c)$] $T^{-p_1( n_1)}\pi(y_1),\;T^{q(n_1)-p_1( n_1)}\pi(y_1)\in S_1$ for $q\in \mathcal{C}$.
\end{enumerate}

Set $x_1=T^{-p_1( n_1)}y_1$.
By $(1a)$, for $1\leq m\leq t$ we have
\[
T^{p_m(n_1)}x_1\in T^{a_1}V_m.
\]
By $(1c)$, for $ q\in \mathcal{C}$ we have
\[
\pi(x_{1}),\; T^{q(n_{1})}\pi(x_{1})\in S_1\subset B(\pi(x),\eta).
\]

Thus by (i) and (ii) we can choose $x_1\in X$ with $\pi(x_1)\in B(\pi(x),\eta)$ and $n_1\in \mathbb{N}$
such that
\begin{itemize}[itemsep=4pt,parsep=2pt]
\item $T^{p_m(n_{1})}x_{1}\in T^{a_1}V_m$ for $1\leq m\leq t$ ;
\item $T^{q(n_{1})}\pi(x_{1})\in  B(\pi(x),\eta)$ for $q\in \mathcal{C}$.
\end{itemize}

\medskip

\noindent {\bf Step l:}
Let $l\geq 2$ be an integer and assume that we have already chosen
$x_1,\ldots,x_{l-1}\in X$ and $n_1,\ldots,n_{l-1}\in \mathbb{N}$ such that for $1\leq j \leq l-1$,
\begin{itemize}[itemsep=4pt,parsep=2pt]
\item $ \pi(x_{l-1})\in  B(\pi(x),(l-1)\eta)\subset B(\pi(x),\delta)$;
\item $T^{p_m(n_j+\ldots+n_{l-1})}x_{l-1}\in  T^{a_j}V_m$ for $1\leq m \leq t$;
\item $  T^{q(n_j+\ldots+n_{l-1})}\pi(x_{l-1})\in  B(\pi(x),(l-1)\eta)$ for $q\in \mathcal{C}$.
\end{itemize}

Choose $\eta_l>0$ with $\eta_l<\eta$ such that for $1\leq j \leq l-1$,
\begin{align}
\label{newl1}  T^{p_m(n_j+\ldots+n_{l-1})}B(x_{l-1},\eta_l)&\subset T^{a_j}V_m,
&\forall\;  1\leq m \leq t, \\
\label{newl2}   T^{q(n_j+\ldots+n_{l-1})}B\big(\pi(x_{l-1}),\eta_l\big)&
\subset B(\pi(x),(l-1)\eta),&\forall \; q\in \mathcal{C}.
\end{align}

Let $I_l=\pi^{-1}\big(B(\pi(x_{l-1}),\eta_l)\big)$.
By (\ref{relation-2}),
we have $ \pi(x_{l-1})\in B(\pi(x),\delta)\subset \bigcap_{j=1}^dT^{a_j}W$
 and
\[
\pi(x_{l-1})\in \underbrace{\bigcap_{m=1}^t\pi( I_l\cap T^{a_l} V_m)\cap
\pi\big(B(x_{l-1},\eta_l)\big) }_{=:S_l}\subset \pi(I_l)=
B(\pi(x_{l-1}),\eta_l)\subset B(\pi(x_{l-1}),\eta).
\]

Let
\begin{align*}
\mathcal{A}_l= &\{ p_m-p_1,\;\partial_{n_j+\ldots+n_{l-1}}p_m-p_1:1\leq m\leq t,1\leq j\leq l-1\} \\
= & \{  (b_m-b_1) n,\; (b_m-b_1+2an_j+\ldots+2an_{l-1}) n:1\leq m\leq t,1\leq j\leq l-1\},\\
\mathcal{C}_l=&\{-p_1,\; q-p_1,\;\partial_{n_j+\ldots+n_{l-1}}q-p_1:q\in \mathcal{C},1\leq j\leq l-1\}.
\end{align*}

By Lemma \ref{linear-case-with-constraint}, $\pi$ has the property $\Lambda(\mathcal{A}_l,\mathcal{C}_l)$.
Then for open sets 
$I_l\cap T^{a_l}V_1,I_l\cap T^{a_l}V_2, \ldots,I_l\cap T^{a_l}V_t ,
\underbrace{B(x_{l-1},\eta_l),\ldots,B(x_{l-1},\eta_l)}_{t(l-1)\; \mathrm{times}}$,
there exist $y_l\in I_l\cap T^{a_l}V_1$ and $n_l\in \mathbb{N}$
such that for $1\leq j \leq l-1$,
\begin{enumerate}[itemsep=4pt,parsep=2pt,label=(\arabic*)]
\item[$(la)_1$]$ T^{p_m(n_l)-p_1(n_l)}y_l\in  I_l\cap T^{a_l}V_m$ for $2\leq m\leq t$;
\item[$(la)_2$] $ T^{\partial_{n_j+\ldots+n_{l-1}}p_m(n_l)-p_1(n_l)}y_l\in B(x_{l-1},\eta_l)$
for $1\leq m\leq t$;
\item[$(lc)_1$] $ T^{-p_1(n_l)}\pi(y_l),\;T^{q(n_l)-p_1(n_l)}\pi(y_l)\in  S_l$ for $q\in \mathcal{C}$;
\item[$(lc)_2$] $ T^{\partial_{n_j+\ldots+n_{l-1}}q(n_l)-p_1(n_l)}\pi(y_l)\in  S_l
        \subset B(\pi(x_{l-1}),\eta_l)$ for $q\in \mathcal{C}$.
\end{enumerate}

Set $x_l=T^{-p_1( n_l)}y_l$.
By $(la)_1$ for $1\leq m\leq t$ we have
\[
T^{p_m(n_l)}x_l\in T^{a_l}V_m.
\]
By $(la)_2$ and (\ref{newl1}), for $1\leq m\leq t,1\leq j\leq l-1$ we have
\begin{align*}
T^{p_m(n_j+\ldots+n_{l-1}+n_l)}x_l=& T^{p_m(n_j+\ldots+n_{l-1})}(T^{\partial_{n_j+\ldots+n_{l-1}}p_m(n_l)}x_l)\ \\
   & \in T^{p_m(n_j+\ldots+n_{l-1})} B(x_{l-1},\eta_l)\subset T^{a_j}V_m.
\end{align*}
By $(lc)_1$, for $q\in \mathcal{C}$ we have
\[
\pi(x_l), \;T^{q(n_l)}\pi(x_l)\in S_l\subset B(\pi(x_{l-1}),\eta)
\subset B(\pi(x),l\eta).
\]
By $(lc)_2$ and (\ref{newl2}), for $q\in \mathcal{C},1\leq j\leq l-1$ we have
\begin{align*}
T^{q(n_j+\ldots+n_{l-1}+n_l)}\pi(x_l)=&T^{q(n_j+\ldots+n_{l-1})}(T^{\partial_{n_j+\ldots+n_{l-1}}q(n_l)}\pi(x_l))\ \\
   & \in T^{q(n_j+\ldots+n_{l-1})} B(\pi(x_{l-1}),\eta_l)\subset B(\pi(x),l\eta).
\end{align*}

We finish the construction by induction.
\end{proof}

\medskip

\begin{case}\label{case2}
 $\phi(\mathcal{A})=\big((s,1),(1,2)\big)$.
\end{case}

From $(la)_1,(la)_2$  in the proof of Case \ref{case1}, we can see that the points
$T^{p_m(n_l)}x_l$ and $T^{\partial_{n_j+\ldots+n_{l-1}}p_m(n_l)}x_l$
should be in different open sets.
However, this may not hold for the family of polynomials containing linear ones.
The reason is clear,
for any linear polynomial $q\in \mathcal{P}^*$,
one has $\partial_mq=q$ for all $m\in \mathbb{Z}$.
In this case, we cannot use $q$ and $\partial_mq$ to track different open sets.

%as the difference of  with zero constant is nothing but itself.

%in the construction $x_l$ and $n_l$ should satisfy $T$
%of the $l$-th point,
%we track the $(l-1)$-th point along the differences of the polynomials $p_1,\ldots,p_t$
%to control the positions of the first $(l-1)$ times,
%and at the same time control the $l$-th position along the polynomials $p_1,\ldots,p_t$.
To overcome this difficulty, we divide the proof of Case \ref{case2} into
the following two claims whose proofs will be given after the proof of Case \ref{case2} since they are very long.
For general cases, the idea is similar.

\begin{claim}\label{ex2}
Let $\mathcal{A}=\{an^2+b_1n,\ldots,an^2+b_tn  \}$, where $a$ is a non-zero integer
and $b_1,\ldots,b_t$ are distinct integers,
and let $c_1,\ldots,c_s$ be distinct non-zero integers.
Then for any system $\mathcal{C}$ and open subsets $V_0,V_1,\ldots,V_m$ of $X$
with $\bigcap_{m=0}^t\pi(V_m)\neq \emptyset$,
there exist $z\in V_0$ and $n\in \mathbb{N}$
such that
\begin{itemize}[itemsep=4pt,parsep=2pt]
\item$ T^{c_i n}z\in V_0$ for $1\leq i\leq s$;
\item $ T^{an^2+b_mn}z\in V_m$ for $1\leq m\leq t$;
\item $T^{q(n)}\pi(z)\in \bigcap_{m=0}^t\pi(V_m)$ for $q\in \mathcal{C}$.
\end{itemize}
\end{claim}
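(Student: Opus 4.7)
The plan is to follow the PET construction in the proof of Case~\ref{case1}, enlarging the inductive invariant to also track the linear polynomials $c_in$ into the single target set $V_0$ (the set required to contain $z$). The key contrast with Case~\ref{case1} is that, because $\partial_M(c_in)=c_in$ for every $M$, the linear polynomials cannot be separated into ``new'' and ``tracking'' roles by differentiation as the quadratic $p_m$'s can.

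Exactly as in Case~\ref{case1}, pick $x\in X$ with $\pi(x)\in W:=\bigcap_{m=0}^t\pi(V_m)$, and choose $a_1,\ldots,a_d\in\mathbb N$ and $\delta>0$ so that $\pi^{-1}(B(\pi(x),\delta))\subset\bigcap_m\bigcup_j T^{a_j}V_m$ and $B(\pi(x),\delta)\subset W\cap\bigcap_j T^{a_j}W$. Writing $\eta:=\delta/d$ and $M_{j,l}:=n_j+\cdots+n_l$, I construct $x_l\in X$ and $n_l\in\mathbb N$ inductively so that for every $1\le j\le l\le d$:
\begin{enumerate}[label=(\roman*)]
\item $\pi(x_l)\in B(\pi(x),l\eta)$;
\item $T^{p_m(M_{j,l})}x_l\in T^{a_j}V_m$ for $1\le m\le t$;
\item $T^{c_iM_{j,l}}x_l\in T^{a_j}V_0$ for $1\le i\le s$;
\item $T^{q(M_{j,l})}\pi(x_l)\in B(\pi(x),l\eta)$ for every $q\in\mathcal{C}$.
\end{enumerate}
At $l=d$, (i) forces $\pi(x_d)\in B(\pi(x),\delta)$, and the cover property yields a $j^*$ with $x_d\in T^{a_{j^*}}V_0$. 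Setting $z:=T^{-a_{j^*}}x_d$ and $n:=M_{j^*,d}$ converts (ii)--(iv) into the three conclusions of the claim.

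For the inductive step at level $l$, with $x_{l-1}$ in hand, choose $\eta_l\in(0,\eta)$ small enough that continuity of $T$ turns tracking into membership in $B(x_{l-1},\eta_l)$ and $B(\pi(x_{l-1}),\eta_l)$. Substituting $y_l:=T^{c_1 n_l}x_l$ (so that the minimal-degree $c_1n$ is the element subtracted in the PET step) rewrites the full list of constraints on $x_l$ as a $\Lambda$-problem for the polynomial system
\[
\mathcal A_l=\{p_m-c_1n,\ \partial_{M_j}p_m-c_1n,\ (c_i-c_1)n:1\le m\le t,\ 2\le i\le s,\ 1\le j<l\},
\]
paired with $\mathcal C_l=\{-c_1n,\ q-c_1n,\ \partial_{M_j}q-c_1n:q\in\mathcal C,\ 1\le j<l\}$. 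By Lemma~\ref{PET-induction}, $\phi(\mathcal A_l)=((s-1,1),(1,2))\prec((s,1),(1,2))=\phi(\mathcal A)$ (and collapses to $(1,2)$ when $s=1$). An auxiliary induction on $s$, with base case $s=1$ handled directly by Case~\ref{case1}, then supplies the required $y_l,n_l$; setting $x_l:=T^{-c_1 n_l}y_l$ closes the step.

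The main obstacle is precisely the $\partial_M(c_in)=c_in$ phenomenon. The ``new'' constraint $T^{c_i n_l}x_l\in T^{a_l}V_0$ and the ``tracking'' constraints $T^{c_iM_{j,l}}x_l\in T^{a_j}V_0$ for $j<l$ share the innermost translate $T^{c_i n_l}$, so in particular for $i=1$ the point $y_l$ must lie in both $T^{a_l}V_0$ and $B(x_{l-1},\eta_l)$, a much stricter condition than the $I_l\cap T^{a_l}V_m$ available in Case~\ref{case1}. This intersection is non-empty only because $\pi(x_{l-1})\in B(\pi(x),\delta)\subset T^{a_l}\pi(V_0)$, which forces the fiber $\pi^{-1}(\{\pi(x_{l-1})\})$ to meet $T^{a_l}V_0$; actually steering the $y_l$ produced by the sub-$\Lambda$ problem into this small intersection, while simultaneously fulfilling the quadratic tracking conditions and all constraints coming from $\mathcal C_l$, is the delicate part of the argument and is where the hypothesis that the linear target is the common set $V_0$ (rather than $s$ independent open sets) is essential.
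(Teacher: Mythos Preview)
Your outline has a genuine gap, and it is exactly at the spot you flag as ``delicate''. With the invariant you state, the $j=l$ instance of (iii) together with the tracking condition for the $j<l$ instances forces $y_l\in T^{a_l}V_0\cap B(x_{l-1},\eta_l)$. You claim this intersection is non-empty because $\pi(x_{l-1})\in T^{a_l}\pi(V_0)$, so the fibre $\pi^{-1}(\{\pi(x_{l-1})\})$ meets $T^{a_l}V_0$. But that only produces some point of the fibre in $T^{a_l}V_0$; there is no reason it should lie within $\eta_l$ of $x_{l-1}$, and in general $B(x_{l-1},\eta_l)\cap T^{a_l}V_0$ can be empty. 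Once this base set is empty the sub-$\Lambda$ problem cannot even be posed, so the inductive step collapses. The hypothesis that all linear targets equal $V_0$ does not rescue this: the obstruction is purely that the fixed translate $T^{a_l}V_0$ need not contain $x_{l-1}$.

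The paper's proof sidesteps this by \emph{not} insisting that the ``new'' index at step $l$ be $l$. Instead it records, at each step, an index $k_l\in\{1,\ldots,d\}$ with $x_{l-1}\in T^{a_{k_l}}V_0$ (such a $k_l$ exists by the cover property since $\pi(x_{l-1})\in B(\pi(x),\delta)$), and replaces your invariant (iii) by $T^{c_iM_{j,l}}x_l\in T^{a_{k_j}}V_0$. With this choice $B(x_{l-1},\eta_l)\subset T^{a_{k_l}}V_0$ for small $\eta_l$, so the troublesome intersection is automatic and the sub-$\Lambda$ problem goes through exactly as in your reduction to $((s-1,1),(1,2))$. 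The price is that one no longer knows $x_d\in T^{a_{j^*}}V_0$ for a \emph{prescribed} $j^*$; instead one has a sequence $k_1,\ldots,k_{d+1}\in\{1,\ldots,d\}$, and the pigeonhole principle gives $j\le l$ with $k_j=k_{l+1}$, whence $x_l\in T^{a_{k_{l+1}}}V_0=T^{a_{k_j}}V_0$ and $z:=T^{-a_{k_j}}x_l$, $n:=M_{j,l}$ finish the proof. This dynamic-index-plus-pigeonhole device is the missing idea in your proposal.
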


\begin{claim}\label{ex-claim2}
Let $\mathcal{A}=\{an^2+b_1n,\ldots,an^2+b_tn  \}$, where $a$ is a non-zero integer
and $b_1,\ldots,b_t$ are distinct integers,
and let $c_1,\ldots,c_s$ be distinct non-zero integers.
Then for any system $\mathcal{C}$ and open subsets $W_0,W_1,\ldots,W_s$ of $X$
with $\bigcap_{i=0}^s\pi(W_i)\neq \emptyset$,
there exist $z\in W_0$ and $n\in \mathbb{N}$
such that
\begin{itemize}[itemsep=4pt,parsep=2pt]
\item $ T^{an^2+b_mn}z\in W_0$ for $1\leq m\leq t$;
\item$ T^{c_in}z\in W_i$ for $1\leq i \leq s$;
\item $T^{q(n)}\pi(z)\in \bigcap_{i=0}^s\pi(W_i)$ for $q\in \mathcal{C}$.
\end{itemize}
\end{claim}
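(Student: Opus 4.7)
The plan is to adapt the inductive construction of Case \ref{case1} to the mixed linear-plus-quadratic setting, relying on the outer PET induction hypothesis $\Lambda(\mathcal{A}',\mathcal{C}')$ for every $\mathcal{A}'$ with $\phi(\mathcal{A}')\prec ((s,1),(1,2))$. As in Case \ref{case1} I would replace each $W_i$ by $W_i\cap\pi^{-1}(W)$ with $W=\bigcap_{i=0}^s\pi(W_i)$, pick $x\in X$ with $\pi(x)\in W$, and by minimality together with openness of $\pi$ produce shifts $a_1,\ldots,a_d$ and a $\delta>0$ satisfying analogues of (\ref{relation-1}) and (\ref{relation-2}); take $\eta=\delta/d$. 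The goal then becomes to build $x_1,\ldots,x_d$ and $n_1,\ldots,n_d$ such that for all $1\le j\le l\le d$:
\begin{itemize}
\item[(i)] $\pi(x_l)\in B(\pi(x),l\eta)$;
\item[(ii)] $T^{c_i(n_j+\cdots+n_l)}x_l\in T^{a_j}W_i$ for $1\le i\le s$;
\item[(iii)] $T^{a(n_j+\cdots+n_l)^2+b_m(n_j+\cdots+n_l)}x_l\in T^{a_j}W_0$ for $1\le m\le t$;
\item[(iv)] $T^{q(n_j+\cdots+n_l)}\pi(x_l)\in B(\pi(x),l\eta)$ for $q\in \mathcal{C}$.
\end{itemize}
Once this is achieved, $\pi(x_d)\in B(\pi(x),\delta)$ forces $x_d\in T^{a_{j_0}}W_0$ for some $j_0$, and $z=T^{-a_{j_0}}x_d$ with $n=n_{j_0}+\cdots+n_d$ finishes the claim exactly as in Case \ref{case1}.

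At the inductive step $l\ge 1$, the plan is to pick the base polynomial $p_1(n)=c_1n$ of minimal degree, write $x_l=T^{-c_1n_l}y_l$, and apply $\Lambda(\mathcal{A}_l,\mathcal{C}_l)$ to the reduced PET-system
\[
\mathcal{A}_l=\{(c_i-c_1)n:2\le i\le s\}\cup\{an^2+(b_m-c_1)n,\; an^2+(2ak+b_m-c_1)n:k\in\Sigma_l,\;1\le m\le t\},
\]
where $\Sigma_l=\{n_j+\cdots+n_{l-1}:1\le j\le l-1\}$, together with $\mathcal{C}_l=\{-c_1n\}\cup\{q-c_1n,\;\partial_kq-c_1n:q\in\mathcal{C},\;k\in\Sigma_l\}$. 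Since $\phi(\mathcal{A}_l)=((s-1,1),(1,2))\prec\phi(\mathcal{A})$, the outer induction delivers $\Lambda(\mathcal{A}_l,\mathcal{C}_l)$.

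The central obstacle, flagged explicitly in the paper, is that $\partial_k q=q$ for every linear $q\in\mathcal{P}^{*}$, so $\partial_k(c_in)-c_1n$ collapses to $(c_i-c_1)n$ and does not supply a fresh member of $\mathcal{A}_l$ through which to separate the new $j=l$ requirement from the continuity-propagation requirements for $j<l$. The natural way around this is to feed $(c_i-c_1)n$ into the application of $\Lambda(\mathcal{A}_l,\mathcal{C}_l)$ with a single joint target
\[
\widetilde V_{l,i}:=I_l\cap T^{a_l}W_i\cap\bigcap_{j=1}^{l-1}T^{a_j-c_i(n_j+\cdots+n_{l-1})}W_i,
\]
where $I_l=\pi^{-1}(B(\pi(x_{l-1}),\eta_l))$, and analogously take the base set for $y_l$ to be $I_l\cap\widetilde V_{l,1}$. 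The key observation is that $\pi(\widetilde V_{l,i})$ contains $B(\pi(x_{l-1}),\eta_l)\cap T^{a_l}W\cap\bigcap_{j=1}^{l-1}T^{a_j-c_i(n_j+\cdots+n_{l-1})}W$, and this set already contains $\pi(x_{l-1})$: by (\ref{relation-2}) we have $\pi(x_{l-1})\in B(\pi(x),(l-1)\eta)\subset\bigcap_j T^{a_j}W$, while applying $\pi$ to the inductive hypothesis (ii) at step $l-1$ gives $T^{c_i(n_j+\cdots+n_{l-1})}\pi(x_{l-1})\in T^{a_j}W$, equivalently $\pi(x_{l-1})\in T^{a_j-c_i(n_j+\cdots+n_{l-1})}W$. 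The quadratic targets $I_l\cap T^{a_l}W_0$ for $p_m-p_1$ and $B(x_{l-1},\eta_l)$ for $\partial_kp_m-p_1$ are genuinely distinct polynomials (they differ by $2akn$ with $a\ne 0$), and their $\pi$-images share the same common point $\pi(x_{l-1})$, so the hypothesis of $\Lambda(\mathcal{A}_l,\mathcal{C}_l)$ is satisfied.

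The verification of (i)--(iv) for $x_l=T^{-c_1n_l}y_l$ then splits as in Case \ref{case1}: conditions (i) and (iv) follow from the $\mathcal{C}_l$-side through $-c_1n$, $q-c_1n$, and $\partial_kq-c_1n$; condition (ii) for $i\ge 2$ follows from $T^{(c_i-c_1)n_l}y_l\in\widetilde V_{l,i}$, which simultaneously places $T^{c_in_l}x_l$ in every required $T^{-c_i(n_j+\cdots+n_{l-1})}T^{a_j}W_i$, while the case $i=1$ is handled by $y_l\in\widetilde V_{l,1}$ itself; and condition (iii) follows exactly as in Case \ref{case1}, using the separation of $p_m-p_1$ from $\partial_kp_m-p_1$. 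The principal technical burden I expect is choosing the chain of continuity moduli $\eta_l$ small enough at each step so that none of the $\pi$-images drift outside $B(\pi(x),l\eta)$, which is routine but notationally heavy.
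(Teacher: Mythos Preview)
Your approach has a genuine gap at the ``key observation'' about $\pi(\widetilde V_{l,i})$. You assert that
\[
\pi(\widetilde V_{l,i})\supset B(\pi(x_{l-1}),\eta_l)\cap T^{a_l}W\cap\bigcap_{j=1}^{l-1}T^{a_j-c_i(n_j+\cdots+n_{l-1})}W,
\]
but the valid containment goes the other way: for any sets $A,B$ one only has $\pi(A\cap B)\subset\pi(A)\cap\pi(B)$. Concretely, to get $\pi(x_{l-1})\in\pi(\widetilde V_{l,i})$ you would need a \emph{single} point $v$ in the fiber $\pi^{-1}(\{\pi(x_{l-1})\})$ lying simultaneously in $T^{a_l}W_i$ and in every $T^{a_j-c_i(n_j+\cdots+n_{l-1})}W_i$. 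The inductive data only guarantee that the fiber meets each of these shifted copies of $W_i$ separately (at possibly different points), not that it meets their intersection. Without this, the hypothesis $\bigcap\pi(\text{targets})\neq\emptyset$ of $\Lambda(\mathcal A_l,\mathcal C_l)$ is not verified, and the induction step collapses. This is precisely the obstruction the paper flags: because $\partial_k(c_in)=c_in$, you cannot route the distinct requirements for $j=l$ and $j<l$ through distinct polynomials, and the attempt to compensate by intersecting targets fails for the reason above.

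The paper bypasses the whole construction with a one-line trick: set
\[
\mathcal{A}_1=\{-an^2-b_1n,\; -an^2+(c_i-b_1)n:1\le i\le s\},\qquad \mathcal{C}_1=\{q(n)-an^2-b_1n:q\in\mathcal{C}\},
\]
and apply Claim~\ref{ex2} (already proved) to $\mathcal{A}_1$ with the ``extra linear'' integers $b_2-b_1,\ldots,b_t-b_1$ and open sets $W_0,W_0,W_1,\ldots,W_s$. This yields $w\in W_0$ and $n$ with $T^{(b_m-b_1)n}w\in W_0$, $T^{-an^2-b_1n}w\in W_0$, $T^{-an^2+(c_i-b_1)n}w\in W_i$, and the $\mathcal{C}_1$-condition on $\pi(w)$. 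Putting $z=T^{-an^2-b_1n}w$ swaps the roles of linear and quadratic and gives exactly the conclusion of Claim~\ref{ex-claim2}. No new inductive construction is needed, and no outer PET hypothesis beyond what was used for Claim~\ref{ex2} is invoked.
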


Using Claims \ref{ex2} and \ref{ex-claim2}, we are able to give a proof of Case \ref{case2}.

\begin{proof}[Proof of Case \ref{case2} assuming Claims \ref{ex2} and \ref{ex-claim2}]
Fix a system $\mathcal{C}$.
Let
\[
\mathcal{A}=\{an^2+b_1n,\ldots,an^2+b_tn,\; c_1n,\ldots,c_s n \}
\]
 where $a$ is a non-zero integer,
$b_1,\ldots,b_t$ are distinct integers, and
$c_1,\ldots,c_s$ are distinct non-zero integers.
Let $V_0,V_1,\ldots,V_s,U_1,\ldots,U_t$ be open subsets of $X$ with
\[
W:=\bigcap_{i=0}^s\pi(V_i)\cap\bigcap_{m=1}^t\pi(U_m)\neq \emptyset.
\]

Let $\mathcal{A}_1=\{an^2+b_1n,\ldots,an^2+b_tn\}$.
Using Claim \ref{ex2} for system $\mathcal{A}_1$ and integers $c_1,\ldots,c_s$,
then for system $\mathcal{C}$ and open sets $V_0\cap \pi^{-1}(W),U_1,\ldots,U_t$,
there exist $w\in V_0\cap \pi^{-1}(W)$ and $k\in \mathbb{N}$
such that
\begin{itemize}[itemsep=4pt,parsep=2pt]
\item $ T^{c_ik}w\in V_0\cap \pi^{-1}(W)$ for $1\leq i \leq s$;
\item $ T^{ak^2+b_mk}w\in U_m$ for $1\leq m\leq t$;
\item$T^{q(k)}\pi(w)\in \pi\big(V_0\cap \pi^{-1}(W)\big)\cap \bigcap_{m=1}^t\pi(U_m)\subset W$ for $q\in \mathcal{C}$.
\end{itemize}

For every $1\leq i \leq s$,
as $T^{c_ik}w\in V_0\cap \pi^{-1}(W)$,
there is some $w_i\in V_i$ with $\pi(T^{c_ik}w)=\pi(w_i)$ which implies
\begin{equation}\label{equal1}
\pi(w)=\pi(T^{-c_ik}w_i).
\end{equation}

Choose $\gamma>0$ such that
\begin{align}
\label{conA1} T^{c_ik}B(T^{-c_ik}w_i,\gamma)&\subset V_i, \quad\;\;\; \forall\; 1\leq i\leq s, \\
\label{conA2} T^{ak^2+b_mk}B(w,\gamma)&\subset U_m, \quad\; \forall\; 1\leq m\leq t,\\
\label{conA3} T^{q(k)}B\big(\pi(w),\gamma\big)&\subset W,\quad\;\; \;\forall\; q\in \mathcal{C}.
\end{align}

Let $W_0=B(w,\gamma)\cap \pi^{-1}\big( B(\pi(w),\gamma)\big)\cap V_0$ and
let $W_i=B(T^{-c_ik}w_i,\gamma)$ for $1\leq i\leq s$.
Then $W_0$ is a non-empty open set as $w\in W_0$,
and $\pi(w)\in \pi(W_i)$ by (\ref{equal1}).

Let
\begin{align*}
\mathcal{A}_2 &=\{\partial_k p:p\in \mathcal{A}_1\}=\{an^2+(b_1+2ak)n,\ldots,an^2+(b_t+2ak)n\}, \\
\mathcal{C}_1 & =\{\partial_k q:q\in \mathcal{C}\}.
\end{align*}

Now using Claim \ref{ex-claim2} for system $\mathcal{A}_2$ and integers $c_1,\ldots,c_s$,
then for system $\mathcal{C}_1$ and open sets $W_0,W_1,\ldots,W_s$,
there exist $z\in W_0$ and $l\in \mathbb{N}$ such that
\begin{align}
\label{qqq1} T^{al^2+(b_m+2ak)l}z&\in W_0\subset B(w,\gamma), \quad\quad\quad\quad\quad\quad\quad\quad\;\;\;\; \forall\; 1\leq m\leq t, \\
\label{qqq2} T^{c_il}z&\in W_i=B(T^{-c_ik}w_i,\gamma), \quad\quad\quad\quad\quad\quad\;\; \;\forall\; 1\leq i\leq s,\\
\label{qqq3}T^{\partial_kq(l)}\pi(z)&\in \bigcap_{i=0}^s\pi(W_i)\subset \pi(W_0)\subset B(\pi(w),\gamma),\quad\;\; \forall\; q\in \mathcal{C}.
\end{align}
By (\ref{conA1}) and (\ref{qqq2}), for $1\leq i \leq s$ we have
\[
T^{c_i(l+k)}z\in T^{c_ik}B(T^{-c_ik}w_i,\gamma)\subset V_i.
\]
By (\ref{conA2}) and (\ref{qqq1}), for $1\leq m\leq t$ we have
\[
T^{a(l+k)^2+b_m(l+k)}z=T^{ak^2+b_mk}(T^{al^2+(b_m+2ak)l}z)\in T^{ak^2+b_mk}B(w,\gamma)\subset U_m.
\]
By (\ref{conA3}) and (\ref{qqq3}), for $q\in \mathcal{C}$ we have
\[
T^{q(l+k)}\pi(z)=T^{q(k)}(T^{\partial_k q(l)}\pi(z))\in T^{q(k)}B(\pi(w),\gamma)\subset W.
\]
Put $n=l+k$, then we have
\begin{itemize}[itemsep=4pt,parsep=2pt]
\item $z\in V_0$;
\item$ T^{c_in}z\in V_i$ for $1\leq i \leq s$;
\item $ T^{an^2+b_mn}z\in U_m$ for $1\leq m\leq t$;
\item $T^{q(n)}\pi(z)\in W=\bigcap_{i=0}^s\pi(V_i)\cap\bigcap_{m=1}^t\pi(U_m)$ for $q\in \mathcal{C}$.
\end{itemize}

This completes the proof of Case \ref{case2}.
\end{proof}

We now proceed to the proof of Claims \ref{ex2} and \ref{ex-claim2}.

\begin{proof}[Proof of Claim \ref{ex2}]
%Let $a$ be a non-zero integer and let $b_1,\ldots,b_t$ be distinct integers.
We show this claim by induction on $s$.
When $s=0$, it follows from Case \ref{case1}.
Let $s\geq 1$ be an integer and
suppose the statement of the claim is true for $ (s-1)$.

Let $c_1,\ldots,c_s$ be distinct non-zero integers,
and let $V_0,V_1,\ldots,V_m$ be open subsets of $X$ with $W:=\bigcap_{m=0}^t\pi(V_m)\neq \emptyset$.
By the similar argument in the proof of Case \ref{case1},
we may assume without loss of generality that $\pi(V_m)=W$ for $0\leq m\leq t$,
and there exist $x\in X$ with $\pi(x)\in W$, $a_1,\ldots,a_d\in \mathbb{N}$ and $\delta>0$ such that
\begin{equation}\label{case2-relation1}
\pi^{-1}\big(B(\pi(x),\delta)\big)\subset  \bigcap_{m=0}^t\bigcup_{j=1}^dT^{a_j}V_m,
\end{equation}
and
\begin{equation}\label{case2-relation2}
B(\pi(x),\delta)\subset W \cap \big(\bigcap_{j=1}^d  T^{a_j}W \big).
\end{equation}

Fix a system $\mathcal{C}$ and let $\eta=\delta/d$.
Write $p_m(n)=an^2+b_mn$ for $1\leq m\leq t$.
Inductively we will construct $x_1,\ldots,x_d\in X,k_1,\ldots,k_{d+1}\in \{1,\ldots,d\}$ with $k_1=1$
and $n_1,\ldots,n_d\in \mathbb{N}$
such that for every $1\leq j\leq l \leq d$,
\begin{itemize}[itemsep=4pt,parsep=2pt]
\item $x_l\in T^{a_{k_{l+1}}}V_0$;
\item $\pi(x_l)\in B(\pi(x),l\eta)$;
\item $T^{c_i(n_j+\ldots+n_l)}x_l\in T^{a_{k_j}}V_0$ for $1\leq i\leq s$;
\item $T^{p_m(n_j+\ldots+n_l)}x_l\in  T^{a_{k_j}}V_m$ for $1\leq m\leq t$;
\item $T^{q(n_j+\ldots+n_l)}\pi(x_l)\in  B(\pi(x),l\eta)$ for $q\in \mathcal{C}$.
\end{itemize}

Assume this has been achieved, there exist $1\leq j\leq l\leq d$ with $k_j=k_{l+1}$ such that
\begin{itemize}[itemsep=4pt,parsep=2pt]
\item $x_l\in T^{a_{k_{l+1}}}V_0=T^{a_{k_j}}V_0$;
\item $T^{c_i(n_j+\ldots+n_l)}x_l\in T^{a_{k_j}}V_0$ for $1\leq i\leq s$;
\item $T^{p_m(n_j+\ldots+n_l)}x_l\in T^{a_{k_j}}V_m$ for $1\leq m\leq t$;
\item $T^{q(n_j+\ldots+n_l)}\pi(x_l)\in  B(\pi(x),l\eta)\subset B(\pi(x),\delta)\subset \bigcap_{j=1}^d  T^{a_j}W  $ for $q\in \mathcal{C}$
       \ \ \   by (\ref{case2-relation2}).
\end{itemize}

Put $n=n_j+\ldots+n_l$ and $z=T^{-a_{k_{j}}}x_{l}$, then the claim follows.

\medskip

{\bf
We now return to the inductive construction of $x_1,\ldots,x_d,k_1,\ldots,k_{d+1}$ and $n_1,\ldots,n_d$.
}

\medskip

\noindent {\bf Step 1:}
Let $I_1=\pi^{-1}\big(B(\pi(x),\eta)\big)$
Then $I_1$ is an open subset of $X$ and
\[
\pi(x)\in \underbrace{\bigcap_{m=0}^t\pi( I_1\cap T^{a_1} V_m) }_{=:S_1}\subset \pi(I_1)=
B(\pi(x),\eta).
\]

Let
\[
\mathcal{A}_1 =\{p_m(n)-c_1n:1\leq m\leq t\} \quad\mathrm{and}\quad
\mathcal{C}_1  =\{-c_1n,\; q(n)-c_1n:q\in \mathcal{C}\}.
\]

By our inductive hypothesis,
the conclusion of Claim \ref{ex2} holds for system $\mathcal{A}_1$ and integers $c_2-c_1,\ldots,c_s-c_1$.
Then for system $\mathcal{C}_1$ and open sets $I_1\cap T^{a_1} V_0,I_1\cap T^{a_1} V_1, \ldots,I_1\cap T^{a_1} V_t$,
there exist $y_1\in I_1\cap T^{a_1}V_0$ and $n_1\in \mathbb{N}$
such that
\begin{enumerate}[itemsep=4pt,parsep=2pt,label=(\arabic*)]
\item[$(1a)_1$] $T^{(c_i-c_1) n_1}y_1\in I_1\cap T^{a_1} V_0$ for $2\leq i\leq s$;
\item[$(1a)_2$] $T^{p_m(n_1)-c_1n_1}y_1\in I_1\cap T^{a_1} V_m$ for $1\leq m\leq t$;
\item[$(1c)_{\;\;}$] $T^{-c_1 n_1}\pi(y_1),\; T^{q(n_1)-c_1 n_1}\pi(y_1)\in S_1$ for $q\in \mathcal{C}$.
\end{enumerate}

Set $x_1=T^{-c_1 n_1}y_1$.
By $(1a)_1$, for $1\leq i\leq s$ we have
\[
T^{c_i n_1}x_1\in T^{a_1}V_0.
\]
By $(1a)_2$, for $1\leq m\leq t$ we have
\[
T^{p_m(n_1)}x_{1}\in T^{a_1}V_m.
\]
By $(1c)$, for $q\in \mathcal{C}$ we have
\begin{equation}\label{case2-001}
\pi(x_{1}),\;T^{q(n_{1})}\pi(x_{1})\in S_1\subset  B(\pi(x),\eta)\subset  B(\pi(x),\delta).
\end{equation}

There is some $k_2\in \{1,\ldots,d\}$ with $x_1\in T^{a_{k_2}}V_0$ by (\ref{case2-relation1}) and (\ref{case2-001}).

\medskip

\noindent {\bf Step l:}
Let $l\geq2$ be an integer and assume that we have already chosen
$x_1,\ldots,x_{l-1}\in X$, $k_1,\ldots,k_{l-1}\in \{1,\ldots,d\}$ and $n_1,\ldots,n_{l-1}\in \mathbb{N}$ such that
for $1\leq j\leq l-1$,
\begin{itemize}[itemsep=4pt,parsep=2pt]
\item $\pi(x_{l-1})\in B(\pi(x),(l-1)\eta)$;
\item $T^{c_i(n_j+\ldots+n_{l-1})}x_{l-1}\in T^{a_{k_{j}}}V_0$ for $1\leq i\leq s$;
\item $T^{p_m(n_j+\ldots+n_{l-1})}x_{l-1}\in T^{a_{k_{j}}}V_m$ for $1\leq m\leq t$;
\item $T^{q(n_j+\ldots+n_{l-1})}\pi(x_{l-1})\in  B(\pi(x),(l-1)\eta)$ for $q\in \mathcal{C}$.
\end{itemize}

As $\pi(x_{l-1})\in B(\pi(x),(l-1)\eta)\subset B(\pi(x),d\eta)=B(\pi(x),\delta)$,
by (\ref{case2-relation1})
there is some $k_l\in \{1,\ldots,d\}$
 such that $x_{l-1}\in T^{a_{k_l}}V_0$.

Choose $\eta_l>0$ with $\eta_l<\eta$ such that for $1\leq j\leq l-1$,
\begin{align}
\label{hhh1} B( x_{l-1},\eta_l)&\subset T^{a_{k_l}}V_0,& \\
\label{hhh2} T^{c_i(n_j+\ldots+n_{l-1})} B(x_{l-1},\eta_l)&\subset T^{a_{k_{j}}}V_0,
 &\forall \;1\leq i\leq s, \\
\label{hhh3} T^{p_m(n_j+\ldots+n_{l-1})}B(x_{l-1},\eta_l)&
 \subset  T^{a_{k_{j}}}V_m ,& \forall \;1\leq m\leq t,\\
\label{hhh4} T^{q(n_j+\ldots+n_{l-1})}B(\pi(x_{l-1}),\eta_l)&
  \subset  B(\pi(x),(l-1)\eta) ,& \forall \;q\in \mathcal{C}.
\end{align}

\medskip

Let $I_l=\pi^{-1}\big(B(\pi(x_{l-1}),\eta_l)\big)$.
By (\ref{case2-relation2}),
we have $ \pi(x_{l-1})\in B(\pi(x),\delta)\subset \bigcap_{j=1}^dT^{a_j}W$
 and
\[
\pi(x_{l-1})\in \underbrace{\bigcap_{m=1}^t\pi( I_l\cap T^{a_{k_l}}V_m)\cap
\pi\big(B(x_{l-1},\eta_l)\big) }_{=:S_l} \subset \pi(I_l)=
B(\pi(x_{l-1}),\eta_l)\subset B(\pi(x_{l-1}),\eta).
\]

Let
\begin{align*}
\mathcal{A}_l& =\{p_m(n)-c_1n,\;\partial_{n_j+\ldots+n_{l-1}}p_m(n)-c_1n:1\leq m\leq t,1\leq j\leq l-1\} ,\\
\mathcal{C}_l& =\{-c_1n,\;q(n)-c_1n,\;\partial_{n_j+\ldots+n_{l-1}}q(n)-c_1n:q\in \mathcal{C},1\leq j\leq l-1\}.
\end{align*}

By our inductive hypothesis,
the conclusion of Claim \ref{ex2} holds for system $\mathcal{A}_l$ and integers $c_2-c_1,\ldots,c_s-c_1$.
Then for system $\mathcal{C}_l$ and open sets $B(x_{l-1},\eta_l),I_l\cap T^{a_{k_l}}V_1,\ldots,I_l\cap T^{a_{k_l}}V_t,\underbrace{B(x_{l-1},\eta_l),\ldots,B(x_{l-1},\eta_l)}_{t(l-1)\;\mathrm{times}}$,
there exist $y_l\in B(x_{l-1},\eta_l)$ and $n_l\in \mathbb{N}$
such that for $1\leq j\leq l-1$,
\begin{enumerate}[itemsep=4pt,parsep=2pt,label=(\arabic*)]
\item[$(la)_1$] $ T^{(c_i-c_1)n_l}y_l\in B(x_{l-1},\eta_l)$ for $2\leq i \leq s$;
\item[$(la)_2$] $ T^{p_m(n_l)-c_1n_l}y_l\in I_l\cap T^{a_{k_l}}V_m$ for $1\leq m \leq t$;
\item[$(la)_3$] $ T^{\partial_{n_j+\ldots+n_{l-1}}p_m(n_l)-c_1
   n_l}y_l\in  B(x_{l-1},\eta_l)$ for $1\leq m\leq t$;
\item[$(lc)_1$] $ T^{-c_1n_l}\pi(y_l),\;T^{q(n_l)-c_1 n_l}\pi(y_l)\in S_l$ for $q\in \mathcal{C}$;
\item[$(lc)_2$] $ T^{\partial_{n_j+\ldots+n_{l-1}}q(n_l)-c_1n_l}\pi(y_l)\in S_l\subset B(\pi(x_{l-1}),\eta_l)$
        for $q\in \mathcal{C}$.
\end{enumerate}

\medskip

Set $x_l=T^{-c_1 n_l}y_l$.
By $(la)_1$ and (\ref{hhh1}), for $1\leq i\leq s$ we have
\begin{equation}\label{AQ}
T^{c_i n_l}x_l\in B(x_{l-1},\eta_l)\subset T^{a_{k_l}}V_0.
\end{equation}
By (\ref{hhh2}) and (\ref{AQ}) , for $1\leq i\leq s,1\leq j\leq l-1$ we have
\[
T^{c_i(n_j+\ldots+n_{l-1}+n_l)}x_l\in T^{c_i(n_j+\ldots+n_{l-1})}B(x_{l-1},\eta_l)
\subset
T^{a_{k_j}}V_0 .
\]
By $(la)_2$, for $1\leq m\leq t$ we have
\[
T^{p_m(n_l)}x_l\in T^{a_{k_l}}V_m.
\]
By $(la)_3$ and (\ref{hhh3}), for $1\leq m\leq t$ and $1\leq j\leq l-1$ we have
\begin{align*}
T^{p_m(n_j+\ldots+n_{l-1}+n_l)}x_l=& T^{p_m(n_j+\ldots+n_{l-1})}(T^{\partial_{n_j+\ldots+n_{l-1}}p_m(n_l)}x_l)\ \\
   & \in T^{p_m(n_j+\ldots+n_{l-1})} B(x_{l-1},\eta_l)\subset T^{a_{k_j}}V_m.
\end{align*}
By $(lc)_1$, for $q\in \mathcal{C}$ we have
\begin{equation}\label{1212}
\pi(x_l), \;T^{q(n_l)}\pi(x_l)\in S_l\subset B(\pi(x_{l-1}),\eta)
\subset B(\pi(x),l\eta).
\end{equation}
By $(lc)_2$ and (\ref{hhh4}), for $q\in \mathcal{C}$ and $1\leq j\leq l-1$ we have
\begin{align*}
T^{q(n_j+\ldots+n_{l-1}+n_l)}\pi(x_l)=& T^{q(n_j+\ldots+n_{l-1})}(T^{\partial_{n_j+\ldots+n_{l-1}}q(n_l)}\pi(x_l))\ \\
   & \in T^{q(n_j+\ldots+n_{l-1})} B(\pi(x_{l-1}),\eta_l)\subset B(\pi(x),l\eta).
\end{align*}

There is some $k_{l+1}\in \{1,\ldots,d\}$ with $x_l\in T^{a_{k_{l+1}}}V_0$ by (\ref{case2-relation1})
and (\ref{1212}).

We finish the construction by induction.
 \end{proof}

Using Claim \ref{ex2}, we are able to give a proof of Claim \ref{ex-claim2}.

\begin{proof}[Proof of Claim \ref{ex-claim2}]
Let
\begin{align*}
\mathcal{A}_1= & \{-an^2-b_1n,\;-an^2+(c_1-b_1)n,\ldots,\;-an^2+(c_s-b_1)n\}, \\
\mathcal{C}_1= & \{q(n)-an^2-b_1n:q\in \mathcal{C}\}.
\end{align*}

Using Claim \ref{ex2} for system $\mathcal{A}_1$ and integers $b_2-b_1,\ldots,b_m-b_1$,
then for system $\mathcal{C}_1$ and open sets $W_0,W_0,W_1,\ldots,W_s$,
there exist $w\in W_0$ and $n\in \mathbb{N}$
such that
\begin{itemize}[itemsep=4pt,parsep=2pt]
\item$ T^{(b_m-b_1)n}w\in W_0$ for $2\leq m\leq t$;
\item $ T^{-an^2-b_1n}w\in W_0$;
\item $ T^{-an^2+(c_i-b_1)n}w\in W_i$ for $1\leq i\leq s$;
\item $T^{q(n)-an^2-b_1n}\pi(w)\in \bigcap_{i=0}^s\pi(W_i)$ for $q\in \mathcal{C}$.
\end{itemize}

Put $z=T^{-an^2-b_1n}w$, then the claim follows.
\end{proof}

\subsection{Proofs of Theorems \ref{polynomial-case} and \ref{polynomial-TCF}}\

Now we are able to give proofs of the main results of this section.
Proving them, we need two intermediate claims
whose proofs will be given in later subsection since they are very long.

\medskip

For a weight vector $\vec{w}=\big((\phi(w_1),w_1),\ldots,(\phi(w_k),w_k)\big)$,
define $\min (\vec{w})=w_1$ and $\max (\vec{w})=w_k$. That is,
for any system $\mathcal{A}$ with weight vector $\vec{w}$,
there are $a,b\in \mathcal{A}$
such that for all $p\in A$,
\[
\min (\vec{w})=w_1=\mathrm{deg}(a)\leq \mathrm{deg}(p) \leq \mathrm{deg}(b)= w_k=\max (\vec{w}).
\]

%The following claims play a key role in the proof of Theorem \ref{polynomial-case},

\begin{claim}\label{gene-claim1}
Assume $\pi$ has the property
$\Lambda(\mathcal{A}',\mathcal{C}')$
for any systems $\mathcal{A}'$ and $\mathcal{C}'$
if the weight vector of $\mathcal{A}'$ is $\vec{w}$. %=\big((v_1,w_1),\ldots,(v_k,w_k)\big)$.
Let $\mathcal{A}=\{p_1,\ldots,p_t\}$ be a system with weight vector $\vec{w}$,
and let $\mathcal{B}$ be a system such that $\mathrm{deg}(b)<\min(\vec{w})$ for
all $b\in \mathcal{B}$.
Then for any system $\mathcal{C}$ and open subsets $V_0,V_1,\ldots,V_t$ of $X$
with $\bigcap_{m=0}^t\pi(V_m)\neq \emptyset$,
there exist $z\in V_0$ and $n\in \mathbb{N}$
such that
\begin{itemize}[itemsep=4pt,parsep=2pt]
\item $ T^{b(n)}z\in V_0$ for $b\in \mathcal{B}$;
\item $ T^{p_m(n)}z\in V_m$ for $1\leq m\leq t$;
\item $T^{q(n)}\pi(z)\in \bigcap_{m=0}^t\pi(V_m)$ for $q\in \mathcal{C}$.
\end{itemize}
\end{claim}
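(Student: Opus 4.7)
The plan is to proceed by induction on $s := |\mathcal{B}|$, following exactly the same strategy as in Claim \ref{ex2}. The base case $s = 0$ reduces to the standing hypothesis that $\pi$ has the property $\Lambda(\mathcal{A}, \mathcal{C})$. For the inductive step, I will assume the conclusion holds whenever $|\mathcal{B}| = s - 1$, and write $\mathcal{B} = \{b_1, b_2, \ldots, b_s\}$ for an arbitrary ordering.

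Following the setup of Case \ref{case1}, first replace each $V_m$ by $V_m \cap \pi^{-1}(W)$ so that $\pi(V_m) = W := \bigcap_{m=0}^t \pi(V_m)$ for all $0 \le m \le t$. Then choose $x \in X$ with $\pi(x) \in W$; by minimality of $(X, T)$, there exist $a_1, \ldots, a_d \in \mathbb{N}$ with $\pi^{-1}(\{\pi(x)\}) \subset \bigcap_{m=0}^t \bigcup_{j=1}^d T^{a_j} V_m$. Openness of $\pi$ (Theorem \ref{open-map}) then yields $\delta > 0$ with $\pi^{-1}(B(\pi(x), \delta)) \subset \bigcap_m \bigcup_j T^{a_j} V_m$ and $B(\pi(x), \delta) \subset W \cap \bigcap_j T^{a_j} W$. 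Set $\eta = \delta/d$.

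The heart of the argument will be an inductive construction, for $l = 1, \ldots, d$, of points $x_l \in X$, indices $k_{l+1} \in \{1, \ldots, d\}$ (with $k_1 := 1$), and integers $n_l \in \mathbb{N}$ satisfying, for all $1 \le j \le l$: (i) $x_l \in T^{a_{k_{l+1}}} V_0$ and $\pi(x_l) \in B(\pi(x), l\eta)$; (ii) $T^{b_i(n_j + \ldots + n_l)} x_l \in T^{a_{k_j}} V_0$ for $1 \le i \le s$; (iii) $T^{p_m(n_j + \ldots + n_l)} x_l \in T^{a_{k_j}} V_m$ for $1 \le m \le t$; (iv) $T^{q(n_j + \ldots + n_l)} \pi(x_l) \in B(\pi(x), l\eta)$ for $q \in \mathcal{C}$. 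At step $l$, the plan is to consider the shifted system
\[
\mathcal{A}_l = \{p_m - b_1,\ \partial_{n_j + \ldots + n_{l-1}} p_m - b_1 : 1 \le m \le t,\ 1 \le j \le l - 1\},
\]
together with $\mathcal{C}_l = \{-b_1,\ q - b_1,\ \partial_{n_j + \ldots + n_{l-1}} q - b_1 : q \in \mathcal{C},\ 1 \le j \le l - 1\}$ and the reduced lower-degree collection $\mathcal{B}' = \{b_2 - b_1, \ldots, b_s - b_1\}$. Since $\deg(b_1) < \min(\vec{w})$ and $\partial_h$ preserves leading coefficients, every element of $\mathcal{A}_l$ shares its leading term with some $p_m \in \mathcal{A}$, so the weight vector of $\mathcal{A}_l$ is at most $\vec{w}$; moreover, $\mathcal{B}'$ consists of $s - 1$ polynomials in $\mathcal{P}^*$ of degree $< \min(\vec{w})$. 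Applying the inner inductive hypothesis to $(\mathcal{A}_l, \mathcal{C}_l, \mathcal{B}')$ on suitably small neighborhoods of $x_{l-1}$ (chosen by continuity so that the step-$(l-1)$ relations propagate through the $\partial$-shifts) will yield $y_l$ and $n_l$; then $x_l := T^{-b_1(n_l)} y_l$ together with a choice of $k_{l+1}$ dictated by the cover provides the desired data.

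The conclusion will then follow from the pigeonhole principle: among the $d+1$ indices $k_1, \ldots, k_{d+1} \in \{1, \ldots, d\}$ two must coincide, yielding $1 \le j \le l \le d$ with $k_j = k_{l+1}$. Setting $n = n_j + \ldots + n_l$ and $z = T^{-a_{k_j}} x_l$, properties (i)--(iv) translate into exactly the desired statement, with the $\mathcal{C}$-component handled by the inclusion $B(\pi(x), \delta) \subset \bigcap_j T^{a_j} W$. The main obstacle will be the bookkeeping inside the inductive construction: one needs to verify that the weight vector of $\mathcal{A}_l$ never exceeds $\vec{w}$ (so that the inner hypothesis applies), that every element of $\mathcal{C}_l$ lies in $\mathcal{P}^*$, and that the neighborhoods can be shrunk small enough for the earlier relations to persist through the $\partial$-shifts. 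These are exactly the checks carried out in Step $l$ of the proof of Claim \ref{ex2}, and I expect the argument to transfer essentially verbatim with $c_1$ replaced by $b_1$ throughout.
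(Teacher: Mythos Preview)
Your overall scheme is right, but the induction you propose will not close. The difficulty is that the elements of $\mathcal{B}$ need not be linear: they are only required to satisfy $\deg(b)<\min(\vec w)$, which for $\min(\vec w)\ge 3$ allows polynomials of degree $\ge 2$. For such $b$ one has $\partial_h b\neq b$, and at step $l$ the relation $T^{b_i(n_j+\cdots+n_l)}x_l\in T^{a_{k_j}}V_0$ for $j<l$ forces you to control $T^{\partial_{n_j+\cdots+n_{l-1}}b_i(n_l)}x_l$, not just $T^{b_i(n_l)}x_l$. Your reduced collection $\mathcal{B}'=\{b_2-b_1,\ldots,b_s-b_1\}$ handles only the latter, so the step-$l$ construction does not propagate the $\mathcal{B}$-conditions through the $\partial$-shifts. (In Claim~\ref{ex2} this problem disappeared precisely because the $c_i$ were linear and hence $\partial_h c_i=c_i$.) If you try to repair this by enlarging $\mathcal{B}'$ to include all $\partial_{n_j+\cdots+n_{l-1}}b_i-b_1$, its cardinality jumps to roughly $s\cdot l$ and your induction on $|\mathcal{B}|$ no longer goes down.

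The paper fixes this by inducting not on $|\mathcal{B}|$ but on the weight vector $\phi(\mathcal{B})$ via PET-induction: one chooses $b_1\in\mathcal{B}$ of \emph{minimal degree} and at step $l$ takes
\[
\mathcal{B}_l=\{\,b-b_1,\ \partial_{n_j+\cdots+n_{l-1}}b-b_1:\ b\in\mathcal{B},\ 1\le j\le l-1\,\}.
\]
Lemma~\ref{PET-induction} then gives $\phi(\mathcal{B}_l)\prec\phi(\mathcal{B})$ even though $|\mathcal{B}_l|$ may be larger, while $\phi(\mathcal{A}_l)=\vec w$ (not merely $\preceq\vec w$, which is what you actually need since the standing hypothesis only covers weight vector exactly $\vec w$). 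With this modification the rest of your outline---the cover by $T^{a_j}V_m$, the choice of $\eta=\delta/d$, the pigeonhole on $k_1,\ldots,k_{d+1}$, and setting $z=T^{-a_{k_j}}x_l$---goes through exactly as you describe.
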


\begin{claim}\label{gene-claim2}
Assume $\pi$ has the property
$\Lambda(\mathcal{A}',\mathcal{C}')$
for any systems $\mathcal{A}'$ and $\mathcal{C}'$
if the weight vector of $\mathcal{A}'$ is $\vec{w}$.
Let $\mathcal{A}$ be a system with weight vector $\vec{w}$,
and let $c_1,\ldots,c_s\in \mathcal{P}^*$ be distinct linear polynomials such that $c_i\notin \mathcal{A}$ for $1\leq i \leq s$.
Then for any system $\mathcal{C}$ and open subsets $V_0,V_1,\ldots,V_s$ of $X$
with $\bigcap_{i=0}^s\pi(V_i)\neq \emptyset$,
there exist $z\in V_0$ and $n\in \mathbb{N}$
such that
\begin{itemize}[itemsep=4pt,parsep=2pt]
\item$ T^{a(n)}z\in V_0$ for $a\in \mathcal{A}$;
\item $ T^{c_i(n)}z\in V_i$ for $1\leq i\leq s$;
\item $T^{q(n)}\pi(z)\in \bigcap_{i=0}^s\pi(V_i)$ for $q\in \mathcal{C}$.
\end{itemize}
\end{claim}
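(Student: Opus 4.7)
The plan is to mirror the proof of Claim \ref{ex-claim2} at the general weight-vector level, reducing Claim \ref{gene-claim2} to an application of its companion Claim \ref{gene-claim1}. First, dispose of the degenerate case $\vec{w}=(k,1)$: when $\mathcal{A}$ is purely linear, the combined collection $\mathcal{A}\cup\{c_1 n,\ldots,c_s n\}$ is itself a system of distinct linear polynomials with distinct nonzero leading coefficients, so Lemma \ref{linear-case-with-constraint} delivers the conclusion directly.

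Assume now $\max(\vec{w})\geq 2$. I would fix a polynomial $p\in\mathcal{A}$, chosen with care regarding its degree and equivalence class in $\mathcal{A}$, and introduce the shifted systems
\[
\mathcal{A}_1:=\{-p\}\cup\{p_m-p:p_m\in\mathcal{A},\,p_m\neq p\}\cup\{c_i-p:1\leq i\leq s\},
\]
together with $\mathcal{C}_1:=\{q-p:q\in\mathcal{C},\,q\neq p\}$. Split $\mathcal{A}_1$ by degree into a high part $\mathcal{A}_1^{\mathrm{h}}:=\{f\in\mathcal{A}_1:\deg(f)\geq\min(\vec{w})\}$ and a low-degree residue $\mathcal{B}_1^{\mathrm{l}}:=\mathcal{A}_1\setminus\mathcal{A}_1^{\mathrm{h}}$; the residue $\mathcal{B}_1^{\mathrm{l}}$ consists only of those differences $p_m-p$ with $p_m$ equivalent to $p$ and $\deg(p_m-p)<\min(\vec{w})$.

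Next, apply Claim \ref{gene-claim1} with system $\mathcal{A}_1^{\mathrm{h}}$, lower-degree collection $\mathcal{B}_1^{\mathrm{l}}$, polynomial collection $\mathcal{C}_1$, and open sets formed from $V_0,V_1,\ldots,V_s$ by assigning $V_0$ as the target for $-p$ and for each $p_m-p\in\mathcal{A}_1^{\mathrm{h}}$, and $V_i$ as the target for $c_i-p$. The required intersection condition reduces to $\bigcap_{i=0}^s\pi(V_i)\neq\emptyset$, which is assumed. This yields some $w\in V_0$ and $n\in\mathbb{N}$ satisfying all shifted conditions. Setting $z:=T^{-p(n)}w$ and unwinding each condition then recovers the required conclusions of Claim \ref{gene-claim2}: from $T^{-p(n)}w\in V_0$ we get $z\in V_0$; from $w\in V_0$ we get $T^{p(n)}z=w\in V_0$; from $T^{(p_m-p)(n)}w\in V_0$ and $T^{(c_i-p)(n)}w\in V_i$ we obtain $T^{p_m(n)}z\in V_0$ and $T^{c_i(n)}z\in V_i$; and the $\mathcal{C}_1$-conditions yield $T^{q(n)}\pi(z)\in\bigcap_{i=0}^s\pi(V_i)$ for $q\in\mathcal{C}$.

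The main obstacle is the weight-vector bookkeeping: I must choose $p$ so that the weight vector of $\mathcal{A}_1^{\mathrm{h}}$ precedes or equals $\vec{w}$, thereby allowing Claim \ref{gene-claim1} to apply --- either at $\vec{w}$ itself (via the hypothesis of Claim \ref{gene-claim2}) or at a strictly smaller level (via the outer induction hypothesis on $\vec{w}$). A class-counting argument shows that at degree $d=\deg(p)$ the number of equivalence classes in $\mathcal{A}_1^{\mathrm{h}}$ equals $\phi(\vec{w})(d)$; at each intermediate degree $w$ with $\min(\vec{w})\leq w<d$ the contribution comes solely from differences $p_m-p$ with $p_m$ equivalent to $p$, and a careful selection of $p$ (for instance, from an equivalence class in $\mathcal{A}$ whose shifted differences avoid introducing spurious new classes above the $\min(\vec{w})$ threshold) is needed to keep this count within $\vec{w}$'s budget.
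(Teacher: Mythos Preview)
Your overall strategy---shift the entire collection by a fixed $p\in\mathcal{A}$, split the resulting system into a ``high'' part to play the role of the system in Claim \ref{gene-claim1} and a ``low'' part to play the role of $\mathcal{B}$, apply Claim \ref{gene-claim1}, then undo the shift via $z=T^{-p(n)}w$---is exactly the paper's approach. The gap is in the weight-vector bookkeeping you correctly flag as the main obstacle but leave unresolved.

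Your split at the threshold $\min(\vec{w})$ does not work in general. Take $\vec{w}=((1,2),(1,4))$ with $\mathcal{A}=\{n^4,\,n^4+n^3,\,n^2\}$. Choosing $p=n^4$ (or $p=n^4+n^3$) yields $\mathcal{A}_1\supset\{-n^4,\,n^3,\,n^2-n^4\}$; since $w_1=2$, the element $n^3$ lands in your $\mathcal{A}_1^{\mathrm{h}}$, and $\phi(\mathcal{A}_1^{\mathrm{h}})=((1,3),(1,4))\succ\vec{w}$, so Claim \ref{gene-claim1} cannot be invoked at any level available to you. One can salvage this particular example by taking $p=n^2$, but when $\mathcal{A}$ itself contains linear polynomials (so $w_1=1$) even the minimal-degree choice fails: then $\mathcal{B}_1^{\mathrm{l}}=\emptyset$ and the linear shifts $c_i-p$ create $s$ new equivalence classes at degree $1$, again forcing $\phi(\mathcal{A}_1^{\mathrm{h}})\succ\vec{w}$.

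The paper's resolution is a different and much cleaner split. Choose $p$ of \emph{maximal} degree $w_k$ and set $\mathcal{A}_p=\{a\in\mathcal{A}:a\text{ equivalent to }p\}$, $\mathcal{A}_r=\mathcal{A}\setminus\mathcal{A}_p$. The low part is $\mathcal{B}=\{a-p:a\in\mathcal{A}_p\}$ and the high part is $\mathcal{A}'=\{-p\}\cup\{c_i-p:1\le i\le s\}\cup\{a-p:a\in\mathcal{A}_r\}$. The key observation is that \emph{every} element of $\mathcal{A}'$ has degree exactly $w_k$: subtracting a top-degree $p$ from anything not in its own equivalence class (including each linear $c_i$ and each lower-degree $a\in\mathcal{A}$) produces a degree-$w_k$ polynomial. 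A straightforward count of leading coefficients then gives $\phi(\mathcal{A}')=((\phi(w_k),w_k))\preceq\vec{w}$, and $\min(\phi(\mathcal{A}'))=w_k$ dominates every degree appearing in $\mathcal{B}$. No case analysis or delicate selection of $p$ is needed; the maximal-degree choice together with the equivalence-class split (rather than a degree-threshold split) makes the bookkeeping automatic.
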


With the help of Claims \ref{gene-claim1} and \ref{gene-claim2},
we are able to show Theorem \ref{polynomial-case}.
\begin{proof}[Proof of Theorem \ref{polynomial-case} assuming Claims \ref{gene-claim1} and \ref{gene-claim2}]
For a system $\mathcal{A}$,
if $\phi(\mathcal{A})=(s,1)$ for some $s\in \mathbb{N}$,
then $\pi$ has the property $\Lambda(\mathcal{A},\mathcal{C})$ for any system $\mathcal{C}$
by Lemma \ref{linear-case-with-constraint}.

Now fix a system $\mathcal{A}$ with $(s,1)\prec \phi(\mathcal{A})$ for all $s\in \mathbb{N}$.
That is, there is some $p\in \mathcal{A}$ with $\mathrm{deg}(p)\geq 2$.
Assume $\pi$ has the property
$\Lambda(\mathcal{A}',\mathcal{C}')$
for any systems $\mathcal{A}'$ and $\mathcal{C}'$
if $\phi(\mathcal{A}')\prec\phi(\mathcal{A})$.
Fix a system $\mathcal{C}$.
We next show that $\pi$ also has the property
$\Lambda(\mathcal{A},\mathcal{C})$.

\medskip

\noindent {\bf Case 1:} $\mathrm{deg}(p)\geq2 $ for every $p\in \mathcal{A}$.

Notice that for any non-zero integer $a$,
$\partial_{a}p\neq p$ for every $p\in \mathcal{A}$.
By the similar construction in the proof of Case \ref{case1},
one can deduce that $\pi$ has the property $\Lambda(\mathcal{A},\mathcal{C})$.

\medskip

\noindent {\bf Case 2:}
There is some $p\in \mathcal{A}$ with $\mathrm{deg}(p)=1 $.

Let $\mathcal{A}=\{c_1,\ldots,c_s,\; p_1,\ldots,p_t\}$ such that $\mathrm{deg}(c_i)=1$ for $1\leq i\leq s$
and $\mathrm{deg}(p_m)\geq 2$ for $1\leq m\leq t$.
Let $V_0,V_1,\ldots,V_s,U_1,\ldots,U_t$ be open subsets of $X$ with
\[
W:= \bigcap_{i=0}^s\pi(V_i)\cap \bigcap_{m=1}^t\pi(U_m)\neq \emptyset.
\]

Let $\mathcal{A}'=\{p_1,\ldots,p_t\}$ and $\mathcal{B}=\{c_1,\ldots,c_s\}$. Then $\phi(\mathcal{A}')\prec \phi(\mathcal{A})$.
By our PET-induction hypothesis, $\pi$ has the property $\Lambda(\mathcal{A}',\mathcal{C})$.
Using Claim \ref{gene-claim1} for systems
 $\mathcal{A}'$ and $\mathcal{B}$,
then for system $\mathcal{C}$ and open sets $V_0\cap \pi^{-1}(W),U_1,\ldots,U_t$,
there exist $w\in V_0\cap \pi^{-1}(W)$ and $k\in \mathbb{N}$
such that
\begin{itemize}[itemsep=4pt,parsep=2pt]
\item$ T^{c_i(k)}w\in V_0\cap \pi^{-1}(W)$ for $1\leq i \leq s$;
\item $ T^{p_m(k)}w\in U_m$ for $1\leq m\leq t$;
\item $T^{q(k)}\pi(w)\in \pi\big(V_0\cap \pi^{-1}(W)\big)\cap \bigcap_{m=1}^t\pi(U_m)\subset W$ for $q\in \mathcal{C}$.
\end{itemize}

For every $1\leq i \leq s$,
as $T^{c_i(k)}w\in V_0\cap \pi^{-1}(W)$,
there is some $w_i\in V_i$ with $\pi(T^{c_i(k)}w)=\pi(w_i)$ which implies
\begin{equation}\label{general-equal1}
\pi(w)=\pi(T^{-c_i(k)}w_i).
\end{equation}

Choose $\gamma>0$ such that
\begin{align}
\label{gene-conA1}T^{c_i(k)} B(T^{-c_i(k)}w_i,\gamma)&\subset V_i, \;\quad\;\forall\; 1\leq i\leq s, \\
\label{gene-conA2} T^{p_m(k)}B(w,\gamma)&\subset U_m,\quad \forall \;1\leq m\leq t,\\
\label{gene-conA3} T^{q(k)}B(\pi(w),\gamma)&\subset W,\;\quad\;\forall\; q\in \mathcal{C}.
\end{align}

Let $W_0=B(w,\gamma)\cap \pi^{-1}\big( B(\pi(w),\gamma)\big)\cap V_0$
and let $W_i=B(T^{-c_i(k)}w_i,\gamma)$ for $1\leq i\leq s$.
Then $W_0$ is a non-empty open set as $w\in W_0$ and $\pi(w)\in \pi(W_i)$ by (\ref{general-equal1}).

Let
\[
\mathcal{A}''=\{\partial_kp_m:1\leq m\leq t\}\quad\text{ and} \quad
\mathcal{C}'=\{\partial_kq:q\in \mathcal{C}\}.
\]

Then $\pi$ has the property $\Lambda(\mathcal{A}'',\mathcal{C}')$
as $\phi(\mathcal{A}'')=\phi(\mathcal{A}')\prec \phi(\mathcal{A})$.
Using Claim \ref{gene-claim2} for systems
$\mathcal{A}''$ and $\mathcal{B}$, then for system
$\mathcal{C}'$ and open sets $W_0,W_1,\ldots,W_s$,
there exist $z\in W_0\subset V_0$ and $l\in \mathbb{N}$ such that
\begin{align}
\label{gene-conC2} T^{\partial_kp_m(l)}z&\in W_0\subset B(w,\gamma) ,  &\forall\;1\leq m\leq t, \\
\label{gene-conC1}T^{c_i(l)}z&\in W_i =B(T^{-c_i(k)}w_i,\gamma),&\forall\;1\leq i\leq s, \\
\label{gene-conC3}  T^{\partial_kq(l)}\pi(z)&\in \bigcap_{i=0}^s \pi(W_i)\subset \pi(W_0)\subset B(\pi(w),\gamma) ,&\forall\;q\in \mathcal{C}.
\end{align}

Recall that $\mathrm{deg}(c_i)=1$,
by (\ref{gene-conA1}) and (\ref{gene-conC1}) for $1\leq i\leq s$ we have
\[
T^{c_i(l+k)}z=T^{c_i(k)}(T^{c_i(l)}z)\in T^{c_i(k)}B(T^{-c_i(k)}w_i,\gamma)\subset V_i.
\]
By (\ref{gene-conA2}) and (\ref{gene-conC2}), for $1\leq m \leq t$ we have
\[
T^{p_m(l+k)}z=  T^{p_m(k)}(T^{\partial_kp_m(l)}z)\in T^{p_m(k)}B(w,\gamma)\subset U_m.
\]
By (\ref{gene-conA3}) and (\ref{gene-conC3}), for $q\in \mathcal{C}$ we have
\[
T^{q(l+k)}\pi(z)
=  T^{q(k)}(T^{\partial_kq(l)}\pi(z))\in T^{q(k)}B(\pi(w),\gamma)\subset W.
\]

Now set $n=l+k$, we get that
\begin{itemize}[itemsep=4pt,parsep=2pt]
\item $z\in V_0$;
\item$ T^{c_i(n)}z\in V_i$ for $1\leq i \leq s$;
\item $ T^{p_m(n)}z\in U_m$ for $1\leq m\leq t$;
\item $T^{q(n)}\pi(z)\in W=\bigcap_{i=0}^s\pi(V_i)\cap\bigcap_{m=1}^t\pi(U_m)$ for $q\in \mathcal{C}$.
\end{itemize}

This completes the proof.
\end{proof}

We are ready to show Theorem \ref{polynomial-TCF}.

\begin{proof}[Proof of Theorem \ref{polynomial-TCF}]
It follows Theorems \ref{key-thm0} and \ref{polynomial-case}.
\end{proof}

We now proceed to the proof of Claims \ref{gene-claim1} and \ref{gene-claim2}.

\begin{proof}[Proof of Claim \ref{gene-claim1}]
Fix a weight vector $\vec{w}$ with $\min(\vec{w})\geq 2$,
otherwise the system $\mathcal{B}$ will be empty and there is nothing to prove.

We prove this claim by PET-induction,
the induction on the weight vector of the system $\mathcal{B}$.
Fix a non-empty system $\mathcal{B}$ such that $\mathrm{deg}(b)<\min(\vec{w})$ for all $b\in \mathcal{B}$,
and
suppose the statement of the claim is true for any systems
$\mathcal{B}',\mathcal{A}',\mathcal{C}$ if $\phi(\mathcal{B}')\prec \phi(\mathcal{B})$ and $\phi(\mathcal{A}')=\vec{w}$.

Let $\mathcal{A}=\{p_1,\ldots,p_t\}$ be a system with weight vector $\vec{w}$, % i.e.,
%$\mathrm{deg}(p_m)\geq 2$ for every $1\leq m\leq t$,
and let $V_0,V_1,\ldots,V_t$ be open subsets of $X$
with $W:=\bigcap_{m=0}^t\pi(V_m)\neq \emptyset$.
By the similar argument in the proof of Case \ref{case1},
we may assume without loss of generality that $\pi(V_m)=W$ for $0\leq m\leq t$,
and there exist $x\in X$ with $\pi(x)\in W$, $a_1,\ldots,a_d\in \mathbb{N}$ and $\delta>0$ such that
\begin{equation}\label{general-relation1}
\pi^{-1}\big(B(\pi(x),\delta)\big)\subset  \bigcap_{m=0}^t\bigcup_{j=1}^dT^{a_j}V_m,
\end{equation}
and
\begin{equation}\label{general-relation2}
B(\pi(x),\delta)\subset W \cap \big(\bigcap_{j=1}^d  T^{a_j}W \big).
\end{equation}

Fix a system $\mathcal{C}$ and
let $\eta=\delta/d$.
Inductively we will construct $x_1,\ldots,x_d\in X,k_1,\ldots,k_{d+1}\in \{1,\ldots,d\}$ with $k_1=1$
and $n_1,\ldots,n_d\in \mathbb{N}$
such that for every $1\leq j\leq l \leq d$,
\begin{itemize}[itemsep=4pt,parsep=2pt]
\item $x_l\in T^{a_{k_{l+1}}}V_0$;
\item $\pi(x_l)\in B(\pi(x),l\eta)$;
\item $T^{b(n_j+\ldots+n_l)}x_l\in T^{a_{k_j}}V_0$ for $b\in \mathcal{B}$;
\item $T^{p_m(n_j+\ldots+n_l)}x_l\in  T^{a_{k_j}}V_m$ for $1\leq m\leq t$;
\item $T^{q(n_j+\ldots+n_l)}\pi(x_l)\in  B(\pi(x),l\eta)$ for $q\in \mathcal{C}$.
\end{itemize}
Assume this has been achieved, we can choose $1\leq j\leq l\leq d$ with $k_j=k_{l+1}$ such that
\begin{itemize}[itemsep=4pt,parsep=2pt]
\item $x_l\in T^{a_{k_{l+1}}}V_0=T^{a_{k_j}}V_0$;
\item $T^{b(n_j+\ldots+n_l)}x_l\in T^{a_{k_j}}V_0$ for $b\in \mathcal{B}$;
\item $T^{p_m(n_j+\ldots+n_l)}x_l\in T^{a_{k_j}}V_m$  for $1\leq m\leq t$;
\item $T^{q(n_j+\ldots+n_l)}\pi(x_l)\in  B(\pi(x),l\eta)
   \subset B(\pi(x),\delta)\subset \bigcap_{j=1}^d  T^{a_j}W  $  for $q\in \mathcal{C}$  \ \ \   by (\ref{general-relation2}).
\end{itemize}
Put $n=n_j+\ldots+n_l$ and $z=T^{-a_{k_{j}}}x_{l}$, then the claim follows.

\medskip

{\bf
We now return to the inductive construction of $x_1,\ldots,x_d,k_1,\ldots,k_{d+1}$ and $n_1,\ldots,n_d$.
}

\medskip

Let $b_1\in \mathcal{B}$ be an element of the minimal weight in $\mathcal{B}$.

\noindent {\bf Step 1:}
Let $I_1=\pi^{-1}\big(B(\pi(x),\eta)\big)$.
Then $I_1$ is an open subset of $X$ and
\[
\pi(x)\in \underbrace{\bigcap_{m=0}^t\pi( I_1\cap T^{a_1} V_m) }_{=:S_1}\subset \pi(I_1)=
B(\pi(x),\eta).
\]

Let
\begin{align*}
\mathcal{B}_1 &=\{b-b_1:b\in \mathcal{B}\}, \\
\mathcal{A}_1 & =\{p_m-b_1:1\leq m\leq t\}, \\
\mathcal{C}_1 &=\{-b_1,q-b_1:q\in \mathcal{C}\}.
\end{align*}

Then $\phi(\mathcal{B}_1)\prec \phi(\mathcal{B})$ by Lemma \ref{PET-induction},
and $\phi(\mathcal{A}_1)=\phi(\mathcal{A})$ as $\mathrm{deg}(b_1)<\mathrm{deg}(p_m)$ for every $1\leq m\leq t$.
By our PET-induction hypothesis,
the conclusion of Claim \ref{gene-claim1} holds for systems $\mathcal{A}_1$ and $\mathcal{B}_1$.
Then for system $\mathcal{C}_1$ and open sets $I_1\cap T^{a_1} V_0,I_1\cap T^{a_1} V_1,\ldots,I_1\cap T^{a_1} V_t$,
there exist $y_1\in I_1\cap T^{a_1}V_0$ and $n_1\in \mathbb{N}$
such that
\begin{enumerate}[itemsep=4pt,parsep=2pt,label=(\arabic*)]
\item[$(1b)$] $T^{b(n_1)-b_1 (n_1)}y_1\in I_1\cap T^{a_1} V_0$ for $b\in \mathcal{B}$;
\item[$(1a)$] $T^{p_m(n_1)-b_1(n_1)}y_1\in I_1\cap T^{a_1} V_m$ for $1\leq m\leq t$;
\item[$(1c)$] $T^{-b_1 (n_1)}\pi(y_1),\; T^{q(n_1)-b_1 (n_1)}\pi(y_1)\in S_1$ for $q\in \mathcal{C}$.
\end{enumerate}

Set $x_1=T^{-b_1 (n_1)}y_1$.
By $(1b)$, for $b\in \mathcal{B}$ we have
\[
T^{b(n_1)}x_{1}\in T^{a_1}V_0.
\]
By $(1a)$, for $1\leq m\leq t$ we have
\[
T^{p_m(n_1)}x_{1}\in T^{a_1}V_m.
\]
By $(1c)$, for $q\in \mathcal{C}$ we have
\begin{equation}\label{general0000}
\pi(x_{1}),\; T^{q(n_{1})}\pi(x_{1})\in S_1\subset  B(\pi(x),\eta) \subset  B(\pi(x),\delta).
\end{equation}

There is some $k_2\in \{1,\ldots,d\}$ with $x_1\in T^{a_{k_2}}V_0$ by (\ref{general-relation1}) and (\ref{general0000}).

\medskip

\noindent {\bf Step l:}
Let $l\geq2$ be an integer and assume that we have already chosen
$x_1,\ldots,x_{l-1}\in X$, $k_1,\ldots,k_{l-1}\in \{1,\ldots,d\}$ and $n_1,\ldots,n_{l-1}\in \mathbb{N}$ such that for $1\leq j\leq l-1$,
\begin{itemize}[itemsep=4pt,parsep=2pt]
\item $\pi(x_{l-1})\in B(\pi(x),(l-1)\eta)$;
\item $T^{b(n_j+\ldots+n_{l-1})}x_{l-1}\in T^{a_{k_{j}}}V_0$ for $b\in \mathcal{B}$;
\item $T^{p_m(n_j+\ldots+n_{l-1})}x_{l-1}\in T^{a_{k_{j}}}V_m$ for $1\leq m\leq t$;
\item $T^{q(n_j+\ldots+n_{l-1})}\pi(x_{l-1})\in  B(\pi(x),(l-1)\eta)$ for $q\in \mathcal{C}$.
\end{itemize}

As $\pi(x_{l-1})\in B(\pi(x),(l-1)\eta)\subset B(\pi(x),d\eta)=B(\pi(x),\delta)$,
by (\ref{general-relation1})
there is some $k_l\in \{1,\ldots,d\}$ such that $x_{l-1}\in T^{a_{k_l}}V_0$.

Choose $\eta_l>0$ with $\eta_l<\eta$ such that for $1\leq j\leq l-1$,
\begin{align}
\label{generalhhh1} B( x_{l-1},\eta_l)&\subset T^{a_{k_l}}V_0,& \\
\label{generalhhh2}  T^{b(n_j+\ldots+n_{l-1})}B(x_{l-1},\eta_l)&\subset
 T^{a_{k_{j}}}V_0,&\forall\; b\in \mathcal{B}, \\
\label{generalhhh3} T^{p_m(n_j+\ldots+n_{l-1})}B(x_{l-1},\eta_l)&\subset
T^{a_{k_{j}}}V_m, &\forall\; 1\leq m\leq t,\\
\label{generalhhh4} T^{q(n_j+\ldots+n_{l-1})}B(\pi(x_{l-1}),\eta_l)&\subset
 B(\pi(x),(l-1)\eta), &\forall\;q\in \mathcal{C}.
\end{align}

\medskip

Let $I_l=\pi^{-1}\big(B(\pi(x_{l-1}),\eta_l)\big)$.
By (\ref{general-relation2}),
we have $ \pi(x_{l-1})\in B(\pi(x),\delta)\subset \bigcap_{j=1}^dT^{a_j}W$
 and
\[
\pi(x_{l-1})\in \underbrace{\bigcap_{m=1}^t\pi( I_l\cap T^{a_{k_l}}V_m)\cap
\pi\big(B(x_{l-1},\eta_l)\big) }_{=:S_l} \subset \pi(I_l)=
B(\pi(x_{l-1}),\eta_l)\subset  B(\pi(x_{l-1}),\eta).
\]

Let
\begin{align*}
\mathcal{B}_l &=\{b-b_1,\;\partial_{n_j+\ldots+n_{l-1}}b-b_1:b\in \mathcal{B},1\leq j\leq l-1\}, \\
\mathcal{A}_l & = \{p_m-b_1,\;\partial_{n_j+\ldots+n_{l-1}}p_m-b_1:1\leq m\leq t,1\leq j\leq l-1\} ,\\
\mathcal{C}_l &=\{-b_1,\;q-b_1,\;\partial_{n_j+\ldots+n_{l-1}}q-b_1:q\in \mathcal{C},1\leq j\leq l-1\}.
\end{align*}

Then $\phi(\mathcal{B}_l)\prec \phi(\mathcal{B})$ by Lemma \ref{PET-induction},
and $\phi(\mathcal{A}_l)=\phi(\mathcal{A}_1)=\phi(\mathcal{A})$.
By our PET-induction hypothesis,
the conclusion of Claim \ref{gene-claim1} holds for systems $\mathcal{A}_l,\mathcal{B}_l$.
Notice that for any non-zero integer $a$,
 $\partial_{a}p_m\neq p_m$ for $1\leq m\leq t$.
Hence for system $\mathcal{C}_l$ and open sets $B(x_{l-1},\eta_l),I_l\cap T^{a_{k_l}}V_1,\ldots,I_l\cap T^{a_{k_l}}V_t,\underbrace{B(x_{l-1},\eta_l),\ldots,B(x_{l-1},\eta_l)}_{t(l-1) \; \mathrm{times}}$,
there exist $y_l\in B(x_{l-1},\eta_l)$ and $n_l\in \mathbb{N}$
such that for $1\leq j\leq l-1$,
%\footnote{Notice that $\partial_{a}p_m\neq p_m$ for any non-zero integer $a$ and $1\leq m\leq t$.}
\begin{enumerate}[itemsep=4pt,parsep=2pt,label=(\arabic*)]
\item[$(lb)_{\; \;}$]$ T^{b(n_l)-b_1(n_l)}y_l,\;
T^{\partial_{n_j+\ldots+n_{l-1}}b(n_l)-b_1(n_l)}y_l\in B(x_{l-1},\eta_l)$ for $b\in \mathcal{B}$;
\item[$(la)_1$] $ T^{p_m(n_l)-b_1(n_l)}y_l\in I_l\cap T^{a_{k_l}}V_m$ for $1\leq m \leq t$;
\item[$(la)_2$] $ T^{\partial_{n_j+\ldots+n_{l-1}}p_m(n_l)-b_1(n_l)}y_l\in  B(x_{l-1},\eta_l)$
for $1\leq m\leq t$;
\item[$(lc)_1$] $ T^{-b_1(n_l)},\; T^{q(n_l)-b_1( n_l)}\pi(y_l)\in S_l$ for $q\in \mathcal{C}$;
\item[$(lc)_2$] $ T^{\partial_{n_j+\ldots+n_{l-1}}q(n_l)-b_1(n_l)}\pi(y_l)\in S_l\subset B(\pi(x_{l-1}),\eta_l)$
for $q\in \mathcal{C}$.
\end{enumerate}

Set $x_l=T^{-b_1 (n_l)}y_l$.
By $(lb)$ and (\ref{generalhhh1}), for $b\in \mathcal{B}$ we have
\[
T^{b (n_l)}x_l\in B(x_{l-1},\eta_l)\subset T^{a_{k_l}}V_0.
\]
By $(lb)$ and (\ref{generalhhh2}), for $b\in \mathcal{B},1\leq j\leq l-1$ we have
\begin{align*}
T^{b(n_j+\ldots+n_{l-1}+n_l)}x_l=& T^{b(n_j+\ldots+n_{l-1})}(T^{\partial_{n_j+\ldots+n_{l-1}}b(n_l)}x_l)\ \\
   & \in T^{b(n_j+\ldots+n_{l-1})} B(x_{l-1},\eta_l)\subset T^{a_{k_{j}}}V_0.
\end{align*}
By $(la)_1$, for $1\leq m\leq t$ we have
\[
T^{p_m(n_l)}x_l\in T^{a_{k_l}}V_m.
\]
By $(la)_2$ and (\ref{generalhhh3}), for $1\leq m\leq t,1\leq j\leq l-1$ we have
\begin{align*}
T^{p_m(n_j+\ldots+n_{l-1}+n_l)}x_l=& T^{p_m(n_j+\ldots+n_{l-1})}(T^{\partial_{n_j+\ldots+n_{l-1}}p_m(n_l)}x_l)\ \\
   & \in T^{p_m(n_j+\ldots+n_{l-1})} B(x_{l-1},\eta_l)\subset T^{a_{k_{j}}}V_m.
\end{align*}
By $(lc)_1$, for $q\in \mathcal{C}$ we have
\begin{equation}\label{general1212}
\pi(x_l), \;T^{q(n_l)}\pi(x_l)\in S_l\subset B(\pi(x_{l-1}),\eta)
\subset B(\pi(x),l\eta).
\end{equation}
By $(lc)_2$ and (\ref{generalhhh4}), for $q\in \mathcal{C},1\leq j\leq l-1$ we have
\begin{align*}
T^{q(n_j+\ldots+n_{l-1}+n_l)}\pi(x_l)=&T^{q(n_j+\ldots+n_{l-1})}(T^{\partial_{n_j+\ldots+n_{l-1}}q(n_l)}\pi(x_l))\ \\
   & \in T^{q(n_j+\ldots+n_{l-1})} B(\pi(x_{l-1}),\eta_l)\subset B(\pi(x),l\eta).
\end{align*}

There is some $k_{l+1}\in \{1,\ldots,d\}$ with $x_l\in T^{a_{k_{l+1}}}V_0$ by (\ref{general-relation2})
and (\ref{general1212}).

We finish the construction by induction.
\end{proof}

\medskip

Using Claim \ref{gene-claim1}, we are able to give a proof of Claim \ref{gene-claim2}.

\begin{proof}[Proof of Claim \ref{gene-claim2}]
Fix a weight vector
$\vec{w}=\big((\phi(w_1),w_1),\ldots,(\phi(w_k),w_k)\big)$.
Let $\mathcal{A}$ be a system with weight vector $\vec{w}$,
and let $c_1,\ldots,c_s\in \mathcal{P}^*$ be distinct linear polynomials such that $c_i\notin \mathcal{A}$ for $1\leq i \leq s$.
Assume that $\pi$ has the property $\Lambda(\mathcal{A},\mathcal{C})$ for any system $\mathcal{C}$.

When $w_k=1$, that is, every polynomial in $\mathcal{A}$
is linear, it follows from Lemma \ref{linear-case-with-constraint}.

When $w_k\geq 2$.
Let $p\in \mathcal{A}$ with $\mathrm{deg}(p)=w_k$ and let
\[
\mathcal{A}_p=\{a\in \mathcal{A}:a,p\; \text{are equivalent}\} \quad
\mathrm{and}\quad
\mathcal{A}_r =\mathcal{A}-\mathcal{A}_p.
\]

Fix a system $\mathcal{C}$ and let
\begin{align*}
\mathcal{ B }\;& =\{a-p:a\in \mathcal{A}_{p}\}, \\
\mathcal{A}' &=\{c_i-p,\;-p,\;a-p:1\leq i\leq s,\;a\in \mathcal{A}_r\}, \\
\mathcal{C}' &= \{q-p:q\in \mathcal{C}\}.
\end{align*}
It is easy to see $\phi(\mathcal{A}')=(\phi(w_k),w_k)\prec \phi(\mathcal{A})$,
thus $\pi$ has the property $\Lambda(\mathcal{A}',\mathcal{C}')$.
For any $b\in \mathcal{B}$, there is some $a\in \mathcal{A}_p$ such that
 $\mathrm{deg}(b)=\mathrm{deg}(a-p)<\mathrm{deg}(p)=w_k=\min( \phi(\mathcal{A}'))$.
Using Claim \ref{gene-claim1} for systems
$\mathcal{A}'$ and $\mathcal{B}'$,
then for system $\mathcal{C}$
and open sets
$V_0,V_1,\ldots,V_s,\underbrace{V_0,\ldots,V_0}_{(|\mathcal{A}'|-s )\;\mathrm{times}}$,
there exist $w\in V_0$ and $n\in \mathbb{N}$
such that
\begin{itemize}[itemsep=4pt,parsep=2pt]
\item $ T^{a(n)-p(n)}w\in V_0$ for $a\in \mathcal{A}_{p}$;
\item $ T^{c_i(n)-p(n)}w\in V_i$ for $1\leq i\leq s$;
\item $ T^{-p(n)}w,\; T^{a(n)-p(n)}w\in V_0$ for $a\in \mathcal{A}_r$;
\item  $T^{q(n)-p(n)}\pi(w)\in \bigcap_{i=0}^s\pi(V_i)$ for $q\in \mathcal{C}$.
\end{itemize}

Now put $z=T^{-p(n)}w\in V_0$, then we have
\begin{itemize}[itemsep=4pt,parsep=2pt]
\item $ T^{a(n)}z\in V_0$ for $a\in \mathcal{A}_p\cup \mathcal{A}_{r}= \mathcal{A}$;
\item $ T^{c_i(n)}z\in V_i$ for $1\leq i\leq s$;
\item $T^{q(n)}\pi(z)\in \bigcap_{i=0}^s\pi(V_i)$ for $q\in \mathcal{C}$.
\end{itemize}

This completes the proof.
\end{proof}

\bibliographystyle{amsplain}

\end{document}